\providecommand{\U}[1]{\protect\rule{.1in}{.1in}}
\newtheorem{theorem}{Theorem}
\newtheorem{corollary}[theorem]{Corollary}
\newtheorem{definition}[theorem]{Definition}
\newtheorem{lemma}[theorem]{Lemma}
\newtheorem{proposition}[theorem]{Proposition}
\newtheorem{remark}[theorem]{Remark}
\newenvironment{proof}[1][Proof]{\noindent\textbf{#1.} }{\ \rule{0.5em}{0.5em}}
\begin{document}
	\title{$\mathbb{L}^p$ $(p>1)$-solutions for BSDEs with jumps and stochastic monotone generators}
	
%
	
	\author[1]{Badr ELMANSOURI\thanks{Corresponding author. Emails: \href{mailto:badr.elmansouri@edu.uiz.ac.ma}{badr.elmansouri@edu.uiz.ac.ma} \, \& \, \href{mailto:b.elmansouri@uca.ac.ma}{b.elmansouri@uca.ac.ma}}\thanks{ORCID: \href{https://orcid.org/0000-0003-2603-894X}{ORCID: 0000-0003-2603-894X}}}
	\author[2]{Mohamed EL OTMANI\thanks{Email: \href{mailto:m.elotmani@uiz.ac.ma}{m.elotmani@uiz.ac.ma}}\thanks{ORCID: \href{https://orcid.org/0000-0002-9674-6469}{ORCID: 0000-0002-9674-6469}}}
	
	\affil[1]{Cadi Ayyad University (UCA), National School of Applied Sciences of Marrakech (ENSA-M), BP 575, Avenue Abdelkrim Khattabi, 40000, Guéliz, Marrakech, Morocco.}

		\affil[2]{Faculty of Sciences Agadir, Ibn Zohr University, Laboratory of Analysis and Applied Mathematics (LAMA), Hay Dakhla, BP8106, Agadir, Morocco.}

%
	
	%
%
	\date{}
	\maketitle
\begin{abstract}
We study $\mathbb{L}^p$-solutions for multidimensional discontinuous backward stochastic differential equations in a filtration generated by a Brownian motion and an independent integer-valued random measure. Under suitable $\mathbb{L}^p$-integrability conditions on the data, we establish the existence and uniqueness of $\mathbb{L}^p$-solutions in both cases: $p \geq 2$ and $p \in (1,2)$. The generator is assumed to be stochastically monotone in the state variable $y$, stochastically Lipschitz in the control variables $(z, u)$, and to satisfy a stochastic linear growth condition, along with an appropriate $\mathbb{L}^p$-integrability requirement.
\vspace{0.3cm}

\noindent \textbf{Keywords:} Backward SDEs, Jumps, Random measures, Stochastic monotone coefficient, Stochastic Lipschitz coefficient, $\mathbb{L}^p$-solutions for BSDEs.  

\noindent \textbf{MSC 2020:} 60H10, 60H15, 60H30.
\end{abstract}

	\section{Introduction}
The notion of linear backward stochastic differential equations (BSDEs) was introduced by Bismut \cite{Bismut1973} as adjoint equations associated with the stochastic Pontryagin maximum principle in stochastic control theory. The general case of nonlinear BSDEs in a Brownian framework was later studied by Pardoux and Peng \cite{PardouxPeng1990}. Given a time horizon $T \in (0,+\infty)$ and a standard Brownian motion $W = (W_t)_{t \leq T}$, a solution of such an equation, associated with a terminal value $\xi$ and a generator (or driver) $f$, is a pair of stochastic processes $(Y_t, Z_t)_{t \leq T}$ adapted to the natural filtration generated by $W$ that satisfies:
\begin{equation}\label{basic BSDE Intro Cont}
	Y_t=\xi+\int_{t}^{T} f(s,Y_s,Z_s)ds-\int_{t}^T Z_s d W_s, \quad t \in [0,T].
\end{equation}
Pardoux and Peng \cite{PardouxPeng1990} established the existence and uniqueness of a solution for \eqref{basic BSDE Intro Cont} under suitable assumptions. Specifically, they required the square integrability of $\xi$ and the driver process $(f (t, \omega, 0, 0))_{t\leq T}$, as well as the uniform Lipschitz continuity of the generator $f$ with respect to $(y, z)$. 

Subsequently, Rong \cite{royer2006backward} extended these results to the case involving jumps, where a jump term was added to the formulation of the BSDE \eqref{basic BSDE Intro Cont}, stemming from a Poisson random measure $N$ on $[0,T] \times \mathbb{R}^\ast$. A solution to this equation, associated with data $(\xi,f)$, now consists of a triplet of processes $(Y_t, Z_t, U_t)_{t \leq T}$ adapted to the natural filtration generated by $W$ and $N$, satisfying:
\begin{equation}\label{basic BSDE Intro Disc}
	Y_t=\xi+\int_{t}^{T} f(s,Y_s,Z_s,U_s)ds-\int_{t}^T Z_s d W_s-\int_{t}^{T}\int_{E}U_s(e) \tilde{N}(ds,de),\quad t \in [0,T].
\end{equation}
Here, $\tilde{N}$ is the compensated Poisson martingale measure associated with $N$, given by $\tilde{N}(ds,de) = N(ds,de) - \lambda(de)ds$, where $\lambda$ is a $\sigma$-finite measure on $\mathbb{R}^\ast$ such that $\int_{\mathbb{R}^\ast} (1 \wedge |e|^2)\lambda(de) < +\infty$. 

Under the assumption of square integrable data and a monotonicity condition on the driver, Royer \cite{royer2006backward} studied BSDEs of the form \eqref{basic BSDE Intro Disc} and provided applications to nonlinear expectations. The connection between \eqref{basic BSDE Intro Disc} and the maximum principle for optimal control of stochastic processes was considered by Tang and Li \cite{TangLi1994}.

Since then, the theory of BSDEs has proven to be a powerful tool in addressing various mathematical problems arising in different domains, including mathematical finance \cite{Delong2013,el1997backward}, stochastic control and stochastic game theory \cite{hamadene1995backward,hamadene1995zero,OksendalSulem2019}, and the theory of partial differential equations \cite{barles1997backward,PDEPARD,peng1991probabilistic,Pardoux1999}.

Many works have established existence and uniqueness results under weaker assumptions than those of Pardoux and Peng \cite{PardouxPeng1990}. In particular, under the assumption of square integrable data, many authors have focused on relaxing the Lipschitz condition on the driver $f$. In this context, Briand and Carmona \cite{briand2000bsdes} and Pardoux \cite{Pardoux1999} considered generators that are monotonic with respect to the state variable $y$, under different growth conditions within a Brownian framework (see also \cite{peng1991probabilistic}). The generalization to the case involving Poisson jumps was developed by Royer \cite{royer2006backward} and later extended by Kruse and Popier \cite{Kruse2015} to more general filtrations (see also \cite{ElmansouriElOtmani2023}).

Beyond the square integrability of the data, relatively few papers have addressed the problem of existence and uniqueness of solutions when the data in BSDEs \eqref{basic BSDE Intro Cont} or \eqref{basic BSDE Intro Disc} are not square integrable. In the continuous case within a Brownian filtration, Briand and Carmona \cite{briand2000bsdes}, Briand et al. \cite{Briand2003}, Chen \cite{chen2010p}, El Karoui et al. \cite{el1997backward}, Fan and Jiang \cite{fan2012p}, and Xiao et al. \cite{xiao2015lp} proved the existence and uniqueness of a solution to \eqref{basic BSDE Intro Cont} when the data belong only to $\mathbb{L}^p$ for some $p \in [1,+\infty)$. The case with jumps was studied by Yao \cite{yao2017lp}, where the author proved the existence and uniqueness of a solution to \eqref{basic BSDE Intro Disc} with $\mathbb{L}^p$-integrable data for $p \in (1,2)$, assuming a monotonic driver with at most linear growth in $y$. In the same framework, Li and Wei \cite{LI20141582} investigated the existence and uniqueness of fully coupled forward-backward SDEs, where the noise is generated by $W$ and $N$, and the drivers are monotonic with $\mathbb{L}^p$-integrability conditions for $p \in [2,+\infty)$. Kruse and Popier \cite{Kruse2015,Kruse2017} studied BSDEs with jumps in a general filtration supporting both the Brownian motion $W$ and the Poisson random measure $N$, where an additional orthogonal martingale term appears in \eqref{basic BSDE Intro Disc}. In their seminal work \cite{Kruse2015,Kruse2017}, they proved the existence and uniqueness of a solution under a monotonicity condition on the driver and an $\mathbb{L}^p$-integrability assumption on the data for $p \in (1,+\infty)$ (see also \cite{eddahbi2017} for the case $p \in [2,+\infty)$). Elmansouri and Marzougue \cite{ELMANSOURI2025110407} established the existence and uniqueness of an $\mathbb{L}^p$-integrable solution, for $p\in(1,2)$, for BSDEs with a default jump. More recently, El Jamali \cite{ElJamali} studied the existence and uniqueness of $\mathbb{L}^p$-solutions, with $p\in(1,2)$, for BSDEs driven by a Lévy process and with a Lipschitz generator. This result was later extended to the case of continuous and left-continuous generators by El Jamali and Elmansouri in \cite{ElJamaliElmansouri+2026+1+20}.

Motivated by these contributions, in this paper, we aim to establish the existence and uniqueness of multidimensional BSDEs with jumps of the form \eqref{basic BSDE Intro Disc} with $\mathbb{L}^p$-data for $p \in (1,+\infty)$ under more general assumptions on the generator $f$. More precisely, we investigate $\mathbb{L}^p$-solutions for discontinuous BSDEs \eqref{basic BSDE Intro Disc} where the coefficients satisfy a stochastic monotonicity assumption with respect to the state variable $y$, a stochastic Lipschitz condition with respect to the control variables $(z,u)$, and a stochastic linear growth condition. We emphasize that our stochastic parameters are general, meaning they are stochastic processes that are not necessarily bounded. Compared to the existing literature, and to the best of our knowledge, there are no similar studies developing $\mathbb{L}^p$-solutions for BSDEs of the form \eqref{basic BSDE Intro Disc} in this general framework. Let us first clarify the main differences with the closest related works. Kruse and Popier \cite{Kruse2015,Kruse2017} study BSDEs with jumps under monotonicity assumptions in $y$ and Lipschitz conditions with respect to the remaining variables. In contrast, our framework allows stochastic monotonicity in $y$ and stochastic Lipschitz coefficients with respect to $(z,u)$. The work of Yao \cite{yao2017lp} also deals with BSDEs with jumps under stochastic monotonicity conditions, but the monotonicity coefficient is assumed to satisfy a stronger boundedness requirement, namely an essential boundedness condition on its time integral, and the Lipschitz coefficients with respect to $z$ and $u$ are controlled by deterministic positive integrable functions. In the present paper, these coefficients are allowed to be stochastic processes. Moreover, Li and Wei \cite{li2024weightedlppgeq1solutionsrandom} treat a classical Brownian framework, while our setting includes jumps, which makes the analysis more delicate, especially in the $\mathbb{L}^p$-setting with $p\neq 2$.

The only related recent works in a stochastic monotonicity framework are by Li et al. \cite{li2024weightedsolutionsrandomtime} and Li and Fan \cite{li2024weightedlppgeq1solutionsrandom}, where the authors provide existence and uniqueness results for $\mathbb{L}^p$-solutions of BSDEs \eqref{basic BSDE Intro Cont} with stochastic monotonic coefficients. However, their setting is more regular and easier to handle compared to our case, since it does not include the jump component considered here.  Additionally, BSDEs with stochastic coefficients have been extensively studied in the seminal works of El Karoui and Huang \cite{ElKarouiHuang1997}, Bender and Kohlmann \cite{Bender2000BSDES}, Bahlali et al. \cite{Bahlali2004}, and more recently by Elmansouri and El Otmani \cite{Badrelotmani2024} in a more general context. It is worth noting that BSDEs with stochastic coefficients naturally arise in mathematical finance, stochastic control, and various other fields, as illustrated in \cite{badrjdg,BahlaliK,briand2008bsdes,el1997backward}, among others.

The paper is organized as follows. In Section \ref{sec2}, we introduce the necessary notations and present our general stochastic assumptions on the generator $f$, including the stochastic monotonicity condition in $y$, the stochastic Lipschitz condition in $(z,u)$, a stochastic linear growth condition, and an $\mathbb{L}^p$-integrability hypothesis on the data for $p \in (1,+\infty)$. In Section \ref{sec3}, we establish important a priori estimates for the solutions. Specifically, we derive general and interesting a priori estimates for the case $p \in [2,+\infty)$, as well as for the less regular case $p \in (1,2)$, which requires the use of a generalization of Itô-Meyer's formula since the mapping $\mathbb{R}^d \ni x \mapsto |x|^p$ is not smooth. Finally, in Section \ref{sec4}, we prove the existence and uniqueness of $\mathbb{L}^p$-solutions for BSDEs \eqref{basic BSDE Intro Disc} for $p \in (1,+\infty)$ under stochastic conditions on the driver and $\mathbb{L}^p$-integrability conditions on the data. This is achieved using various approximation techniques, including localization and truncation, starting from the smooth case $p \in [2,+\infty)$ and extending to the more challenging case $p \in (1,2)$.


\section{Preliminaries}
\label{sec2}
Let us consider a complete probability space $(\Omega,\mathcal{F},\mathbb{P})$ and a deterministic terminal time $T\in(0,+\infty)$. On this space, we consider a $k$-dimensional Brownian motion $(W_t)_{t\leq T}$ and an independent standard Poisson random measure $N$ on $[0,T]\times E$, where $E\subseteq\mathbb{R}^{\ell}\setminus\{0\}$, for some $\ell\in\mathbb{N}^{+}$, is endowed with its Borel $\sigma$-field $\mathcal{E}=\mathcal{B}(E)$. 

We define $\mathbb{F}=(\mathcal{F}_t)_{t\leq T}$ as the natural filtration generated by $W$ and $N$, that is, $\mathcal{F}_t=\mathcal{F}_t^W\vee\mathcal{F}_t^N$ for every $t\in[0,T]$, where $\mathcal{F}_t^W=\sigma(W_s:s\leq t)$ and $\mathcal{F}_t^N=\sigma\left(\int_0^s\int_A N(dr,du):s\leq t, A\in\mathcal{E}\right)$. We assume throughout that $\mathbb{F}$ satisfies the usual conditions of right-continuity and completeness. The random measure $N$ has compensator $\upsilon(dt,du)=Q(du)dt$, where $Q$ is a $\sigma$-finite measure on $E$ satisfying $\int_E(1\wedge |e|^2)Q(de)<+\infty$, and such that, for every $\mathcal{G}\in\mathcal{E}$ with $Q(\mathcal{G})<+\infty$, the process $\{\widetilde{N}([0,t]\times\mathcal{G}):=(N-\upsilon)([0,t]\times\mathcal{G})\}_{t\leq T}$ is an $\mathbb{F}$-martingale. 

To simplify the notation in the proofs, for any RCLL process $\mathcal{X}=(\mathcal{X}_t)_{t\leq T}$, we write $\mathcal{X}_\ast:=\sup_{t\in[0,T]}|\mathcal{X}_t|$. We also denote by $\mathcal{T}_{0,T}$ the set of all $[0,T]$-valued $\mathbb{F}$-stopping times.
\begin{remark}
	The recent works \cite{li2024weightedlppgeq1solutionsrandom,li2024weightedsolutionsrandomtime} study $\mathbb{L}^p$-solutions of BSDEs with stochastic monotonicity coefficients in a Brownian framework. The present paper deals with a more general setting, where the filtration is generated not only by a Brownian motion but also by a Poisson random measure. This discontinuous framework creates additional difficulties in the treatment of the jump component and in the derivation of the corresponding $\mathbb{L}^p$-estimates, especially for $p\in(1,2)$.
\end{remark}

In this paper, we denote:
\begin{itemize}
	\item ${\mathcal{P}}$: the predictable $\sigma$-field on $\Omega \times [0,T] $ and
	$$\tilde{\mathcal{P}}=\mathcal{P} \otimes \mathcal{E}$$
	On $\Omega \times [0,T] \times E$ (resp. $\Omega \times [0,T]$), a function that is $\tilde{\mathcal{P}}$-measurable (resp. $\mathcal{P}$-measurable) is called predictable process.
	
	\item $G_{loc}(N)$: the set of predictable functions $U$ on  $\Omega \times [0,T] \times E$ such that for any $t \geq 0$ 
	$$
	\int_{0}^{t} \int_{E} \left(\left|U_s(e)\right|^2 \wedge \left|U_s(e)\right|\right) Q(de)ds<+\infty.
	$$
	
	\item $\mathcal{D}$ (resp. $\mathcal{D}(0,T)$): the set of all  $\bar{\mathcal{P}}$-measurable (resp. ${\mathcal{P}}$-measurable) processes.
	
	\item $L^2_{loc}(W)$: the subspace of $\mathcal{D}$ of processes $Z$ such that for any $t \geq 0$ a.s.
	$$
	\int_{0}^{t} \left|Z_s\right|^2 ds<+\infty.
	$$
	
	\item $\mathbb{L}^p_Q$: the set of measurable function $\psi : E \rightarrow \mathbb{R}^d$ such that 
	$$
	\left\|\psi \right\|^p_{\mathbb{L}^p_Q}= \int_{{E}} \left|\psi(e)\right|^p Q(de) <+\infty.
	$$ 
\end{itemize}
As the filtration $\mathbb{F}$ is generated by the Brownian motion $W$ and the integer valued random measure $N$, we have the following predictable representation property (see \cite[Chapter II. Section 2.4]{Delong2013}).
\begin{theorem}\label{representation thm}
	Any $\mathbb{F}$-local martingale has the decomposition
	$$
	\int_{0}^{\cdot} Z_s dW_s+\int_{0}^{\cdot}\int_{E} U_s(e)\tilde{N}(ds,de)
	$$
	where $Z \in L^2_{loc}(W)$ and $U \in G_{loc}(N)$.
\end{theorem}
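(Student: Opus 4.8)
The plan is to follow the classical Hilbert-space route to the predictable representation property: first establish the decomposition for square-integrable martingales, and then extend it to arbitrary $\mathbb{F}$-local martingales by localization. Write $\mathcal{H}^2_0$ for the space of square-integrable $\mathbb{F}$-martingales $M$ with $M_0=0$, a Hilbert space under $\|M\|^2=\mathbb{E}[|M_T|^2]$. The two stochastic integrals are $L^2$-isometries,
\[ \mathbb{E}\Big|\int_0^T Z_s\,dW_s\Big|^2=\mathbb{E}\int_0^T|Z_s|^2\,ds, \]
\[ \mathbb{E}\Big|\int_0^T\!\!\int_E U_s(e)\,\tilde N(ds,de)\Big|^2=\mathbb{E}\int_0^T\!\!\int_E|U_s(e)|^2 Q(s,de)\eta(s)\,ds, \]
and they are strongly orthogonal, since $W$ is continuous while $\int\!\int U\,\tilde N$ is purely discontinuous (so $[W,\tilde N(\cdot,A)]=0$). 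Consequently the linear space
\[ \mathcal{S}:=\Big\{\int_0^\cdot Z_s\,dW_s+\int_0^\cdot\!\!\int_E U_s(e)\,\tilde N(ds,de)\Big\}, \]
where $Z$ satisfies $\mathbb{E}\int_0^T|Z_s|^2ds<\infty$ and $U$ satisfies $\mathbb{E}\int_0^T\!\int_E|U_s(e)|^2Q(s,de)\eta(s)ds<\infty$, is the image of a Hilbert space under an isometry, hence a closed subspace of $\mathcal{H}^2_0$. The representation for $M\in\mathcal{H}^2_0$ is therefore equivalent to $\mathcal{S}=\mathcal{H}^2_0$, i.e. to showing that the set of terminal values $\{M_T:M\in\mathcal{S}\}$ is total in $L^2(\mathcal{F}_T)$.

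To produce such a total set I would exploit the independence of $W$ and $N$ together with $\mathcal{F}_T=\mathcal{F}_T^W\vee\mathcal{F}_T^N$. For bounded deterministic (or simple predictable) integrands $f,g$, let $\Phi,\Psi$ be the Dol\'eans-Dade exponential martingales
\[ \Phi_t=\exp\!\Big(\int_0^t f_s\,dW_s-\tfrac12\int_0^t|f_s|^2\,ds\Big), \]
\[ \Psi_t=\exp\!\Big(\int_0^t\!\!\int_E g_s(e)\,\tilde N(ds,de)-\int_0^t\!\!\int_E(e^{g_s(e)}-1-g_s(e))\,Q(s,de)\eta(s)\,ds\Big). \]
The terminal values $\{\Phi_T\}$ are total in $L^2(\mathcal{F}_T^W)$ and $\{\Psi_T\}$ are total in $L^2(\mathcal{F}_T^N)$; by independence and a monotone-class argument the products $\{\Phi_T\Psi_T\}$ are total in $L^2(\mathcal{F}_T)$. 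Each factor solves a linear SDE, $d\Phi_t=\Phi_t f_t\,dW_t$ and $d\Psi_t=\Psi_{t^-}\int_E(e^{g_t(e)}-1)\,\tilde N(dt,de)$; since $\Phi$ is continuous and $\Psi$ purely discontinuous, $[\Phi,\Psi]=0$, and integration by parts gives $d(\Phi\Psi)_t=\Phi_t\Psi_{t^-}f_t\,dW_t+\Phi_t\Psi_{t^-}\int_E(e^{g_t(e)}-1)\,\tilde N(dt,de)$. Thus $\Phi\Psi-1\in\mathcal{S}$, its terminal value lies in $\{M_T:M\in\mathcal{S}\}$, and together with the constants this yields a total subset of $L^2(\mathcal{F}_T)$. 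Closedness of $\mathcal{S}$ upgrades this totality to $\mathcal{S}=\mathcal{H}^2_0$, proving the representation for all square-integrable martingales.

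It remains to pass from $\mathcal{H}^2_0$ to a general $\mathbb{F}$-local martingale $M$. I would use the canonical decomposition $M-M_0=M^c+M^d$ into continuous and purely discontinuous parts. A continuous local martingale is locally bounded, hence locally in $\mathcal{H}^2_0$, and applying the result on each interval $[0,\tau_n]$ of a localizing sequence $\tau_n\uparrow T$ produces a consistent $Z\in L^2_{loc}(W)$ representing $M^c=\int Z\,dW$ (consistency on overlaps follows from uniqueness, itself a consequence of the isometries). For $M^d$, every jump of an $\mathbb{F}$-local martingale necessarily occurs at a jump time of $N$, since $\mathbb{F}$ is generated by the continuous process $W$ and by $N$; identifying the jump sizes yields a $\tilde{\mathcal{P}}$-measurable $U\in G_{loc}(N)$ with $M^d=\int\!\int U\,\tilde N$. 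Adding the two parts gives the asserted decomposition. The main obstacle is the totality claim of the second paragraph: one must verify that products of Brownian and jump Dol\'eans-Dade exponentials are total in $L^2(\mathcal{F}_T)$ for the \emph{general} compensator $\upsilon(dt,de)=Q(t,de)\eta(t)\,dt$, not merely in the L\'evy case $\eta\equiv1$, $Q=\nu$, and that $\Psi$ is a genuine element of $\mathcal{H}^2_0$ under the standing hypothesis $\int_0^T\!\int_E|e|^2Q(t,de)\eta(t)\,dt<+\infty$. This requires carefully combining the independence of $W$ and $N$ with the structure of the integer-valued random measure; once it is in hand, the remaining ingredients—the isometries, the closedness of $\mathcal{S}$, and the localization—are routine, and the argument recovers the statement cited from \cite[Chapter II. Section 2.4]{Delong2013}.
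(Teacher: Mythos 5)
The paper does not actually prove this statement: it is imported as a known result, with a pointer to \cite{Delong2013}, so there is no in-paper argument to measure yours against step by step. Judged on its own terms, the first half of your outline is sound: the two isometries, the strong orthogonality of the continuous and purely discontinuous integrals, the closedness of $\mathcal{S}$ in $\mathcal{H}^2_0$, and the reduction of the representation property to the totality of $\{M_T : M \in \mathcal{S}\}$ in $L^2(\mathcal{F}_T)$ are all correct and standard. The genuine gap is the one you flag yourself and then leave open: the totality of the jump Dol\'eans--Dade exponentials $\Psi_T$ in $L^2(\mathcal{F}_T^N)$. That claim is a theorem only in the Poisson/L\'evy case, where independence of increments does the work. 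Here $N$ is a general integer-valued random measure whose compensator $Q(\omega,t;de)\eta(\omega,t)\,dt$ is itself random and predictable; in that generality the exponentials $\Psi$ need not be square-integrable (nor even uniformly integrable) martingales, and showing that their terminal values span $L^2(\mathcal{F}_T^N)$ is essentially as deep as the representation property for $\mathcal{F}^N$ that you are trying to establish -- the argument becomes circular precisely at its load-bearing step. The standard way to close this is Jacod's representation theorem for multivariate point processes (every local martingale of the natural filtration of an integer-valued random measure is a stochastic integral against the compensated measure), which holds with no independence-of-increments structure and is what the cited section of \cite{Delong2013} ultimately rests on.

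The same missing theorem resurfaces in your localization step. The assertions that every jump of an $\mathbb{F}$-local martingale occurs at a jump time of $N$, and that the jump sizes assemble into a $\tilde{\mathcal{P}}$-measurable $U$ with $M^d=\int_0^{\cdot}\int_E U_s(e)\,\tilde N(ds,de)$, are exactly the content of that representation theorem for the purely discontinuous part; they are not facts one can read off from the filtration being generated by $W$ and $N$. Once the Brownian representation for $\mathcal{F}^W$ and Jacod's theorem for $\mathcal{F}^N$ are taken as inputs and glued via the independence of $W$ and $N$ and a monotone-class argument, the remainder of your outline (closedness, orthogonality, localization of the continuous part, uniqueness on overlaps) does go through. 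As written, however, the proof is incomplete at the one point where the generality of the compensator actually matters.
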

\begin{remark}
	Throughout this paper, $\mathfrak{C}$ will denote a positive constant that may vary from one line to another. Moreover, the notation $\mathfrak{C}_\gamma$ will be used to emphasize the dependence of $\mathfrak{C}$ on a specific set of parameters $\gamma$.
	
	We shall also use the notation $\beta_\gamma^\ast$ to denote a positive constant depending on a set of parameters $\gamma$, which will often correspond to the constants appearing in the assumptions. This constant is chosen large enough to ensure all the estimates involving the exponential weight. Thus, for a given set of parameters $\gamma$, whenever a result is stated for $\beta\geq \beta_\gamma^\ast$, all lower bounds on $\beta$ required in the proof are included in the choice of $\beta_\gamma^\ast$.
\end{remark}

In contrast with the continuous case in a Brownian filtration \cite{Briand2003,fan2012p,li2024weightedlppgeq1solutionsrandom,li2024weightedsolutionsrandomtime}, and as explained in \cite{Kruse2017}, the $\mathbb{L}^p$-estimates for $p\in(1,2)$ become more delicate in the presence of Poisson jumps. This requires the introduction of a different functional space for the jump component $u$, which is related to, but not exactly the same as, the one used in the case $p\geq2$. We recall this space below. The main reason for this change is that, when $p\in(1,2)$, the compensator of a martingale does not, in general, control the predictable projection; see \cite{SPS1} for a counterexample. This difficulty is also highlighted in \cite{Kruse2017}, where the use of the same functional space for both cases $p\geq2$ and $p\in(1,2)$ leads to uncontrolled terms in the latter case, as occurred in \cite{Kruse2015} and was subsequently corrected in \cite{Kruse2017}.

For a measurable function $\psi:E\to\mathbb{R}$ and $p\in[1,2)$, we define, following the standard construction of the sum of two Banach spaces as in \cite{Krein1982},
$$
\|\psi\|_{\mathbb{L}^p_Q+\mathbb{L}^2_Q}
=
\inf_{\psi^1+\psi^2=\psi}
\left(
\|\psi^1\|_{\mathbb{L}^p_Q}
+
\|\psi^2\|_{\mathbb{L}^2_Q}
\right),
$$
where the infimum is taken over all admissible decompositions $\psi=\psi^1+\psi^2$ with
$(\psi^1,\psi^2)\in\mathbb{L}^p_Q\times\mathbb{L}^2_Q$. The space $\mathbb{L}^p_\nu+\mathbb{L}^2_\nu$ is defined in the same way, by replacing $Q$ with the product measure $Q\otimes \mathrm{Leb}$ on $[0,T]\times E$, where $\mathrm{Leb}$ denotes the Lebesgue measure on $[0,T]$, that is, $d\mathrm{Leb}(t)=dt$.

By \cite[Remark 1]{Kruse2017}, when $p\geq2$, we have
$\mathbb{L}^p_Q+\mathbb{L}^2_Q\subset\mathbb{L}^2_Q$. Hence, in this case, the appropriate function space for the jump component is $\mathbb{L}^2_Q$, which is the standard choice in the literature on $\mathbb{L}^p$-solutions of BSDEs with jumps; see, for instance, \cite{eddahbi2017,Kruse2015}. On the other hand, for $p\geq1$, and in particular for $p\in(1,2)$, it follows from \cite[Lemma 3]{Kruse2017} that
$\mathbb{L}^p_Q+\mathbb{L}^2_Q\subset\mathbb{L}^1_Q+\mathbb{L}^2_Q$ (see also Remark \ref{Matlo rmk} below). Therefore, for $p\in(1,2)$, a suitable integrability space for the jump component $u$ is $\mathbb{L}^1_Q+\mathbb{L}^2_Q$, which contains all the spaces $\mathbb{L}^p_Q+\mathbb{L}^2_Q$.

In order to treat the cases $p\in(1,2)$ and $p\geq2$ simultaneously, we introduce the following $p$-dependent space for the jump component:
$$
\mathscr{L}^p_Q
:=
\begin{cases}
	\mathbb{L}^1_Q+\mathbb{L}^2_Q, & \text{if } p\in(1,2),\\[0.2cm]
	\mathbb{L}^2_Q, & \text{if } p\geq2.
\end{cases}
$$
We endow it with the norm
$$
\|u\|_{\mathscr{L}^p_Q}
:=
\begin{cases}
	\|u\|_{\mathbb{L}^1_Q+\mathbb{L}^2_Q}, & \text{if } p\in(1,2),\\[0.2cm]
	\|u\|_{\mathbb{L}^2_Q}, & \text{if } p\geq2.
\end{cases}
$$

Let $p>1$. We consider the following BSDE:
\begin{equation}\label{basic BSDE}
	Y_t=\xi+\int_{t}^{T}f(s,Y_s,Z_s,U_s)ds-\int_{t}^T Z_s d W_s-\int_{t}^{T}\int_{E}U_s(e) \tilde{N}(ds,de),\quad t \in [0,T].
\end{equation}
Here,
\begin{itemize}
	\item $\xi$ is an $\mathcal{F}_T$-measurable random variable,
	\item $f : \Omega \times [0,T] \times \mathbb{R}^d \times \mathbb{R}^{d \times k} \times \mathscr{L}^p_Q$ is an $\mathcal{F} \otimes \mathcal{B}([0,T])\otimes \mathcal{B}(\mathbb{R}^d)\otimes \mathcal{B}(\mathbb{R}^{d \times k})\otimes \mathcal{B}(\mathscr{L}^p_Q)$ measurable random function such that for any $(y,z,u) \in \mathbb{R}^d \times \mathbb{R}^{d \times k } \times \mathscr{L}^p_Q$, the process $(\omega,t) \mapsto f(\omega,t,y,z,u)$ is progressively measurable. 
\end{itemize} 

Let $\beta>0$ and $(\zeta_t)_{t \leq T}$ an progressively measurable positive process. Then, we consider the increasing continuous process $A_t:=\int_{0}^{t}\zeta_s^2 ds$ for $t \in [0,T]$. To define the $\mathbb{L}^p$-solution of our BSDE \eqref{basic BSDE} for $p>1$, we need to introduce the following spaces:
\begin{itemize}
	\item $\mathcal{S}^p_{\beta}$: The space of $\mathbb{R}^d$-valued and $\mathcal{F}_{t}$-adapted RCLL processes $(Y_{t})_{t\leq T}$ such that
	$$
	\|Y\|_{\mathcal{S}^{p}_\beta}=\left(\mathbb{E}\left[\sup\limits_{t \in [0,T]}e^{\left(\frac{p}{2} \wedge 1\right)\beta A_t}|Y_{t}|^{p}\right]\right)^{\frac{1}{p}} < +\infty.
	$$
	
	\item[$\bullet$] $\mathcal{S}^{p,A}_\beta$ is the space of $\mathbb{R}^d$-valued and $\mathcal{F}_{t}$-adapted RCLL processes $(Y_{t})_{t\leq T}$ such that
	$$
	\|Y\|_{\mathcal{S}^{p,A}_\beta}=\left(\mathbb{E}\left[\int_{0}^{T}e^{\left(\frac{p}{2} \wedge 1\right)\beta A_s}|Y_{s}|^{p}dA_s\right]\right)^\frac{1}{p} < +\infty.
	$$
	
	\item[$\bullet$] $\mathcal{H}^{p}_\beta$ is the space of $\mathbb{R}^{d\times k}$-valued and $\mathcal{P}$-measurable processes $(Z_{t})_{t\leq T}$ such that
	$$\|Z\|_{\mathcal{H}^{p}_\beta}=\left(\mathbb{E}\left[\left(\int_{0}^{T}e^{\beta A_s}|Z_{s}|^{2}ds\right)^\frac{p}{2}\right]\right)^\frac{1}{p} < +\infty.$$
	
	\item[$\bullet$] $\mathcal{L}^{p}_{Q,\beta}$ is the space of $\mathbb{R}^d$-valued and $\tilde{\mathcal{P}}$-measurable processes $(U_t)_{t\leq T}$ such that
	$$
	\|U\|_{\mathcal{L}^{p}_{Q,\beta}}=\left(\mathbb{E}\left[\left(\int_0^Te^{\beta A_s}\|U_s\|_{\mathbb{L}^2_Q}^2ds\right)^\frac{p}{2}\right]\right)^\frac{1}{p} < +\infty.
	$$
	
	\item[$\bullet$] $\mathcal{L}^{p}_{N,\beta}$ is the space of $\mathbb{R}^d$-valued and $\mathcal{P}\otimes \mathbb{U}$-predictable processes $(U_t)_{t\leq T}$ such that
	$$
	\|U\|_{\mathcal{L}^{p}_{N,\beta}}=\left(\mathbb{E}\left[\left(\int_0^T e^{\beta A_s} \int_{{E}}|U_s(e)|^2N(ds,de)\right)^\frac{p}{2}\right]\right)^\frac{1}{p} < +\infty.
	$$
	Finally, we define 
	\item[$\bullet$] $\mathfrak{B}^p_\beta:=\mathcal{S}^{p}_\beta\cap \mathcal{S}^{p,A}_\beta$ endowed with the norm
	$\|Y\|_{\mathfrak{B}^{p}_\beta}^p=\|Y\|_{\mathcal{S}^{p}_\beta}^p+\|Y\|_{\mathcal{S}^{p,A}_\beta}^p.$

	\item $\mathfrak{L}^{p}_{\beta}:=\mathcal{L}^{p}_{Q,\beta} \cup \mathcal{L}^{p}_{N,\beta}$ endowed with the norm
	$$
	\|U\|_{\mathfrak{L}^{p}_{\beta}}^p
	:=
	\begin{cases}
		\|U\|_{\mathcal{L}^{p}_{N,\beta}}^p, & \text{if } p\in(1,2),\\[0.2cm]
		\|U\|_{\mathcal{L}^{p}_{Q,\beta}}^p, & \text{if } p\geq2.
	\end{cases}
	$$
	
	\item $\mathcal{E}^p_\beta(0,T):=\mathfrak{B}^p_\beta \times \mathcal{H}^p_\beta \times \mathfrak{L}^{p}_{\beta}$ endowed with the norm $ \|(Y,Z,U)\|^p_{\mathcal{E}^p_\beta(0,T)}:=\|Y\|_{\mathfrak{B}^{p}_\beta}^p+\|Z\|^p_{\mathcal{H}^{p}_\beta}+\|U\|_{\mathfrak{L}^{p}_\beta}^p$. 
\end{itemize}

\paragraph{Conditions on the data $(\xi,f)$.} Let $p>1$. 
Throughout the rest of this paper, we consider the following conditions on the terminal value $\xi$ and the generator $f$: 
\begin{itemize}
	
	\item[\textsc{(H1)}] For all $(t,z,u)\in [0,T]\times \mathbb{R}^{d\times k}\times \mathscr{L}^p_Q$, the mapping $y\longmapsto f(t,y,z,u)$ is continuous, $\mathbb{P}$-a.s.
	
	\item[\textsc{(H2)}] There exists an $\mathbb{F}$-progressively measurable process 
	$\alpha:\Omega\times[0,T]\rightarrow\mathbb{R}$ such that, for all 
	$t\in[0,T]$, $y,y'\in\mathbb{R}^d$, $z\in\mathbb{R}^{d\times k}$ and 
	$u\in \mathscr{L}^p_Q$, $d\mathbb{P}\otimes dt$-a.e.,
	$$
	(y-y') (f(t,y,z,u)-f(t,y',z,u)) \leq
	\alpha_t |y-y'|^2.
	$$
	
	\item[\textsc{(H3)}] There exist two non-negative $\mathbb{F}$-progressively measurable processes 
	$\eta,\delta:\Omega\times[0,T]\to\mathbb{R}_{+}$ such that, for all 
	$y\in\mathbb{R}^d$, $z,z'\in\mathbb{R}^{d\times k}$, and $u,u'\in\mathscr{L}^p_Q$, $d\mathbb{P}\otimes dt$-a.e.,
	$$
	\left| f(t,y,z,u)-f(t,y,z',u')\right|
	\leq
	\eta_t \left\|z-z'\right\|
	+
	\delta_t \left\|u-u'\right\|_{\mathscr{L}^p_Q}.
	$$
	Equivalently,
	\begin{itemize}
		\item[(i)] if $p\in(1,2)$, then, for all $u,u'\in\mathbb{L}^1_Q+\mathbb{L}^2_Q$,
		$$
		\left| f(t,y,z,u)-f(t,y,z',u')\right|
		\leq
		\eta_t \left\|z-z'\right\|
		+
		\delta_t \left\|u-u'\right\|_{\mathbb{L}^1_Q+\mathbb{L}^2_Q},
		$$
		$d\mathbb{P}\otimes dt$-a.e.;
		
		\item[(ii)] if $p\geq2$, then, for all $u,u'\in\mathbb{L}^2_Q$,
		$$
		\left| f(t,y,z,u)-f(t,y,z',u')\right|
		\leq
		\eta_t \left\|z-z'\right\|
		+
		\delta_t \left\|u-u'\right\|_{\mathbb{L}^2_Q},
		$$
		$d\mathbb{P}\otimes dt$-a.e.
	\end{itemize}
	
	\item[\textsc{(H4)}] There exist two progressively measurable processes 
	$\varphi:\Omega\times[0,T]\rightarrow[1,+\infty)$ and 
	$\phi:\Omega\times[0,T]\rightarrow(0,+\infty)$ such that, in particular, 
	$\varphi\geq 1$, and, for all $y\in\mathbb{R}^d$, $d\mathbb{P}\otimes dt$-a.e.,
	$$
	\left|f(t,y,0,0)\right|
	\leq
	\varphi_t+\phi_t |y|.
	$$
	
	\item[\textsc{(H5)}] We set $a_s^2=\phi_s+\eta_s^2+\delta_s^2$, where $(a_s)_{s\leq T}$ is a positive process,\footnote{The positivity of $a$ is assumed only to simplify the notation. Otherwise, one may replace $a_\cdot$ by $|a_\cdot|$, and the same arguments remain valid.} and define
	$\zeta_s^2=a_s^2\mathds{1}_{\{p\in[2,+\infty)\}}+a_s^q\mathds{1}_{\{p\in(1,2)\}}$, with $q=\frac{p}{p-1}$. The increasing process $A$ is then defined by
	$A_t=\int_0^t \zeta_s^2 ds$, $t\in[0,T]$. Moreover, we assume that there exists a constant $\epsilon>0$ such that $a_s^2\geq\epsilon$ for all $s\in[0,T]$.
	
	\item[\textsc{(H6)}] The terminal condition $\xi$ and the process $\varphi$ satisfy
	$$
	\mathbb{E}\left[
	e^{\left((p-1)\vee \frac{p}{2}\right)\beta A_T}
	|\xi|^p
	\right]
	<+\infty
	$$
	and
	$$
	\mathbb{E}\int_0^T
	e^{((p-1)\vee 1)\beta A_s}
	|\varphi_s|^p ds
	<+\infty.
	$$
\end{itemize}

The following remark clarifies the rationale for selecting the component $U$ in the space $\mathfrak{L}^{p}_{\beta}$ with respect to the associated norm $\|\cdot\|_{\mathfrak{L}^p_\beta}$.
\begin{remark}\label{rmq p}
	Let $\beta>0$ and $p>1$. Consider $U \in G_{loc}(N)$ and $\mathfrak{M}_t:=\int_{0}^{t} \int_E U_s(e)\tilde{N}(ds,de)$ for $t \in [0,T]$. Note that $\mathfrak{M}$ is an RCLL local martingale (see \cite[Chapter XI. Theorem 21]{he1992}). Moreover, the process $\mathcal{A}_t:=\int_{0}^{t}e^{\beta A_s} \int_E \left|U_s(e)\right|^2{N}(ds,de)$ is an RCLL locally integrable increasing process. Indeed, as $A$ is continuous by definition, for $\mathbb{P}$-almost every $\omega \in \Omega$, the mapping $[0,T] \ni t \mapsto A_t(\omega)$ is bounded. Then, we can consider $\sigma_n:=\inf\left\{ t \geq 0 : 	A_t \geq n \right\} \wedge T$, which is an increasing sequence of stopping times such that $\sigma_n \nearrow T$ a.s. Next, we define $\left\{\tau_n\right\}_{n \geq 1}$ to be the sequence of random times defined by 
	$$
	\tau_n=\inf\left\{ t \geq 0 : 	\int_{0}^{t} \int_{E} \left\|U_s\right\|^2_{\mathbb{L}^2_Q} ds \geq n \right\} \wedge T.
	$$
	Clearly, $\left\{\tau_n\right\}_{n \geq 1}$ is an increasing sequence of stopping times such that $\tau_n \nearrow T$ a.s. as $n \rightarrow +\infty$ since $U \in G_{loc}(N)$. Additionally,  
	the stopped process $\mathfrak{M}^{\tau_n}:=\big(\mathfrak{M}_{t \wedge \tau_n}\big)_{t \leq T}$ is a square-integrable martingale (see \cite[Chapter I. Theorem  4.40]{jacodshiryaev2003}). By setting $\widehat{\tau}_n:=\tau_n \wedge \sigma_n$, we have 
	$$
	\mathbb{E}\left[ \mathcal{A}_{t \wedge \widehat{\tau}_n}\right]  = \mathbb{E}\int_{0}^{t \wedge \widehat{\tau}_n}e^{\beta A_s} \left\|U_s\right\|^2_{\mathbb{L}^2_Q} ds \leq n e^{\beta n}.
	$$
	Finally, as the filtration $\mathbb{F}$ satisfies the usual condition, we are whiten the framework of Section 4 in \cite{SPS1}. Therefore;
	\begin{itemize}
		\item In the case $p \geq 2$,
		\begin{itemize}
			\item[$\ast$]  We have $\frac{p}{2} \geq 1$, then using the convexity of the function $\mathbb{R}^+ \ni x \overset{F}{\mapsto} x^{p/2}$, the fact that $\sup_{x >0} \frac{x F^{\prime}(x)}{F(x)}=\frac{p}{2}$, and \cite[Theorem 4.1-(1)]{SPS1}, we get 
			\begin{equation}\label{CE}
				\mathbb{E}\left[\left(\int_{0}^{T} e^{\beta A_s}\big\|{U}_s\big\|^2_{\mathbb{L}^2_Q}ds\right)^{\frac{p}{2}}\right] \leq \left(\frac{p}{2}\right)^{\frac{p}{2}} \mathbb{E}\left[\left(\int_{0}^{T} e^{\beta A_s}  \int_{E} \big| {U}_s(e)\big|^2  N(ds,de)\right)^{\frac{p}{2}}\right].
			\end{equation}
		Therefore, $\mathcal{L}^{p}_{N,\beta} \subset \mathcal{L}^{p}_{Q,\beta}$ and then $\mathfrak{L}^{p}_{\beta}=\mathcal{L}^{p}_{Q,\beta}$. In particular, for $p=2$ we have the standard equality
		$$\mathbb{E}\left[\int_0^Te^{\beta A_s}\|U_s\|_{\mathbb{L}^2_Q}^2dt\right]=\mathbb{E}\left[\int_0^T\int_{E}e^{\beta A_s}|U_s(e)|^2N(ds,de)\right],$$
		since $\mathfrak{M}$ is a square-integrable martingale. In this specific case, which is frequently encountered in the literature on BSDEs, we have $\mathfrak{L}^{2}_{\beta}=\mathcal{L}^{2}_{N,\beta}=\mathcal{L}^{2}_{Q,\beta}$.
			
			\item[$\ast$] There exists two universal constants $\mathfrak{c}_p$ and $\mathfrak{C}_p$ such that (see \cite[Chapter IV. Theorem 48 ]{Protter2004})
					\begin{equation*}
						\begin{split}
							\mathfrak{c}_p\mathbb{E}\left[\left(\int_{0}^{T} e^{\beta A_s}  \int_{E} \big| {U}_s(e)\big|^2  N(ds,de)\right)^{\frac{p}{4}}\right] &\leq  \mathbb{E}\left[\sup_{0\leq t\leq T} \left|\mathfrak{M}_t\right|^\frac{p}{2}\right]\\ 
							&\leq  \mathfrak{C}_p \mathbb{E}\left[\left(\int_{0}^{T} e^{\beta A_s}  \int_{E} \big| {U}_s(e)\big|^2  N(ds,de)\right)^{\frac{p}{4}}\right].
							\end{split}
					\end{equation*}
		\end{itemize}
	
	\item In the case $p \in  (1,2)$;
	\begin{itemize}
			\item[$\ast$] The compensator of the martingale $\mathfrak{M}$ cannot be controlled by it's quadratic variation, meaning that \eqref{CE} does not holds in general. More precisely, using this time the concavity of the function $\mathbb{R}^+ \ni x \overset{F}{\mapsto} x^{p/2}$ and \cite[Theorem 4.1-(2)]{SPS1} and that $U \in \mathcal{L}^{p}_{N,\beta}$, we get 
		\begin{equation*}\label{E}
			\mathbb{E}\left[\left(\int_{0}^{T} e^{\beta A_s}  \int_{E} \big| {U}_s(e)\big|^2  N(ds,de)\right)^{\frac{p}{2}}\right] \leq 2 \mathbb{E}\left[\left(\int_{0}^{T} e^{\beta A_s}\big\|{U}_s\big\|^2_{\mathbb{L}^2_Q}ds\right)^{\frac{p}{2}}\right].
		\end{equation*}
		Therefore, $\mathcal{L}^{p}_{Q,\beta} \subset \mathcal{L}^{p}_{N,\beta} $ and then $\mathfrak{L}^{p}_{\beta}=\mathcal{L}^{p}_{N,\beta}$.
		
	To justify this inequality, we note that, for $p\in(1,2)$, we have $\frac{p}{2}\in(0,1)$, and the function $F(x)=x^{\frac{p}{2}}$, $x\geq0$, is concave. Set
	$$ 
	B_t:=\int_0^t e^{\beta A_s}\int_E |U_s(e)|^2 N(ds,de),\qquad t\in[0,T]. 
	$$
	Then, since $U$ is predictable and $A$ is continuous and adapted, the dual predictable projection of $B$ is given by
	$$ 
	{P}_t:=\int_0^t e^{\beta A_s}\|U_s\|^2_{\mathbb{L}^2_Q}ds,\qquad t\in[0,T]. 
	$$
	Moreover, since $U \in \mathcal{L}^p_{N,\beta}$, the process $B$ is integrable, and hence locally integrable. Thus, the hypotheses of \cite[Theorem 4.1-(2)]{SPS1} are satisfied. Therefore, we obtain
	$\mathbb{E}\left[F(B_T)\right] \leq 2 \mathbb{E}\left[F({P}_T)\right]$,
	which yields the desired estimate.
		
			\item[$\ast$] The BDG inequality is not applicable to the $\frac{p}{2}$-th power of the martingale $\mathfrak{M}$ since $\frac{p}{2} < 1$, which poses a difficulty in deriving the $\mathbb{L}^p$ estimate of the solution for BSDE (\ref{basic BSDE}) when the generator $f$ depends on the $u$-variable under the \textsc{(H3)} condition.
	\end{itemize}
	
	\end{itemize}
\end{remark}
Note that $\mathfrak{B}^p_\beta:=\mathcal{S}^{p}_\beta\cap\mathcal{S}^{p,A}_\beta$ is a Banach space endowed with the norm $\|Y\|_{\mathfrak{B}^{p}_\beta}^p=\|Y\|_{\mathcal{S}^{p}_\beta}^p+\|Y\|_{\mathcal{S}^{p,A}_\beta}^p$; moreover, we define $\mathfrak{L}^{p}_{\beta}:=\mathcal{L}^{p}_{N,\beta}$ for $p\in(1,2)$ and $\mathfrak{L}^{p}_{\beta}:=\mathcal{L}^{p}_{Q,\beta}$ for $p\geq2$, endowed with the corresponding norm $\|U\|_{\mathfrak{L}^{p}_{\beta}}^p:=\|U\|_{\mathcal{L}^{p}_{N,\beta}}^p$ if $p\in(1,2)$ and $\|U\|_{\mathfrak{L}^{p}_{\beta}}^p:=\|U\|_{\mathcal{L}^{p}_{Q,\beta}}^p$ if $p\geq2$; with these definitions, $\mathcal{E}^p_\beta(0,T)$ is also a Banach space for every $p>1$.

To derive optimal constants in the a priori estimates of the solutions, the following remark is used. 
\begin{remark}\label{rmq essential}
	Let $(Y,Z,U)$ be a solution of the BSDE (\ref{basic BSDE}) associated with $(\xi, f)$. For any $\varepsilon \geq 0$, we set $\Gamma^{(\alpha),\varepsilon}_t:=\exp\left\{\int_{0}^{t} \alpha_s ds+\varepsilon \int_{0}^{t}a^2_s ds +\varepsilon t\right\}$, then we define 
	\begin{equation*}
		\begin{split}
			\hat{Y}_t:=\Gamma^{(\alpha),\varepsilon}_t Y_t,\quad
			\hat{Z}_t:=\Gamma^{(\alpha),\varepsilon}_t Z_t, \quad
			\hat{U}_t:=\Gamma^{(\alpha),\varepsilon}_t U_t.
		\end{split}
	\end{equation*}
	Using an integration by parts formula, we have
	\begin{equation*}
		\begin{split}
			\hat{Y}_t=&\hat{\xi}+\int_{t}^{T}\hat{f}\big(s,\hat{Y}_s,\hat{Z}_s,\hat{U}_s\big) ds-\int_{t}^T \hat{Z}_s d W_s-\int_{t}^{T}\int_{E}\hat{U}_s(e) \tilde{N}(ds,de),
		\end{split}
	\end{equation*}
	with
	\begin{equation*}
		\begin{split}
			\hat{\xi}=\Gamma^{(\alpha),\varepsilon}_T \xi, \quad
			\hat{f}(t,y,z,u)&=\Gamma^{(\alpha),\varepsilon}_t f\left(t,\Gamma^{(-\alpha),-\varepsilon}_t y, \Gamma^{(-\alpha),-\varepsilon}_t z,\Gamma^{(-\alpha),-\varepsilon}_t u\right)-(\alpha_t+\varepsilon a^2_t+\varepsilon) y.
		\end{split}
	\end{equation*}
Thus, whenever $(Y,Z,U)$ is a solution of the BSDE (\ref{basic BSDE}) associated with $(\xi, f)$, the process $(\widehat{Y},\widehat{Z},\widehat{U})$ satisfies a similar BSDE associated with $(\widehat{\xi},\widehat{f})$.

Moreover, this change of variables is compatible with the functional space used in the sequel. Indeed, under the integrability condition imposed on the process defining $A$ in \textsc{(H5)}, the exponential factor $\Gamma^{(\alpha),\varepsilon}$ and its inverse are bounded on $[0,T]$, $\mathbb{P}$-a.s. (see also Remark \ref{rmq p}). Hence, multiplication by $\Gamma^{(\alpha),\varepsilon}$ preserves membership in the corresponding space $\mathcal{E}^p_\beta(0,T)$ on localized intervals. In the general case, as explained below and in accordance with the reduction used in \cite[Remark 2]{ELM2026}, the required integrability condition after the change of variables is imposed explicitly.

The driver $\widehat{f}$ retains the stochastic Lipschitz condition described in \textsc{(H3)} with the same stochastic processes $(\eta_t)_{t \leq T}$ and $(\delta_t)_{t \leq T}$. However, if $f$ satisfies \textsc{(H2)}, then $\widehat{f}$ satisfies a similar monotonicity condition with the real-valued stochastic process
$
\widehat{\alpha}_t=-\varepsilon a_t^2-\varepsilon.
$
Consequently, for any $t\in[0,T]$ and each $\varepsilon\geq0$, we have
$
\widehat{\alpha}_t+\varepsilon a_t^2=-\varepsilon\leq0.
$
Therefore, this change of variables reduces the problem to the case where
$
\alpha_t+\varepsilon a_t^2\leq0$, for $t\in[0,T].
$
The reverse transformation is carried out in a similar manner using an appropriate change of variable; see also Remarks 2 and 3 in \cite{ELM2026}.
\end{remark}

In order to facilitate the calculations, and in view of the change of variables described in Remark \ref{rmq essential}, following the reduction used in \cite[Remark 2]{ELM2026}, we shall work, in the remainder of the paper, under the following convention. For the fixed parameter $\varepsilon \geq 0$ used in the estimates, we assume that condition \textsc{(H2)} is satisfied with a process $(\alpha_t)_{t\leq T}$ such that
$$
\alpha_t+\varepsilon a_t^2 \leq 0,\qquad t\in[0,T].
$$
In particular, under this convention, the generator satisfies the required non-increasing condition in the $y$-variable, in the sense that the monotonicity contribution appearing in the a priori estimates has a non-positive sign. More precisely, after applying the above change of variables and keeping the same notation for the transformed generator, we have, for all $t\in[0,T]$, $y,y'\in\mathbb{R}^d$, $z \in  \mathbb{R}^{d\times k}$, $u \in \mathscr{L}^p_Q$, $d\mathbb{P}\otimes dt$-a.e.,
$$
( y-y') (f(t,y,z,u)-f(t,y',z,u)) \leq 0.
$$

If this is not the case, the same change of variables as the one introduced in Remark \ref{rmq essential} may be applied in order to reduce the problem to this situation. Moreover, the condition involving the processes $(\eta_t)_{t\leq T}$ and $(\delta_t)_{t\leq T}$ remains valid with the same processes after this transformation.

We emphasize, however, that this change of variables modifies the exponential weights appearing in the integrability assumptions on the terminal condition and on the free term of the generator. Consequently, in the general stochastic case, the corresponding integrability conditions cannot be deduced from the original assumptions. Therefore, throughout the sequel, whenever this reduction is invoked, assumption \textsc{(H6)} is understood to be imposed not only on the original data, but also after the change of variables. This convention will be used without further mention in the subsequent estimates.

\begin{remark}
	When the process $A$ is bounded, the exponential factors generated by the change of variables are controlled, and the integrability conditions imposed before and after the transformation are equivalent up to multiplicative constants. In particular, one recovers the equivalence between the conditions on $\xi$ and on the transformed terminal condition $\widehat{\xi}$, and similarly for the free term of the generator. In the general case, this equivalence may fail, as illustrated by the example based on sticky reflected Brownian motion in \cite[pp. 125--126]{ELM2026}. This shows that, contrary to the classical setting with deterministic monotonicity and Lipschitz coefficients studied in \cite{Briand2003,Kruse2015,Kruse2017}, such an equivalence cannot be expected under stochastic monotonicity and stochastic Lipschitz coefficients. This explains why the integrability condition \textsc{(H6)} must be imposed explicitly after the change of variables.
\end{remark}

We end this section with the following definition:
\begin{definition}\label{Def}
	Let $p >1$. A triplet $(Y, Z, U) := (Y_t, Z_t, U_t)_{t \leq T}$ is called an $\mathbb{L}^p$-solution of the BSDE \eqref{basic BSDE} on $[0, T]$ if the following conditions are satisfied:
	\begin{itemize}
		\item $(Y, Z, U)$ satisfies \eqref{basic BSDE} $d\mathbb{P} \otimes dt$-a.s.
		\item There exists $\beta>0$ such that $(Y, Z, U)$ belongs to $\mathcal{E}^p_\beta(0, T)$.
	\end{itemize}
\end{definition}

\section{A priori estimates}
\label{sec3}
To obtain the existence and uniqueness result for $\mathbb{L}^p$-solutions, some a priori estimates are required.
\subsection{$\mathbb{L}^p$-estimates for $p \in  [2, +\infty)$}
To highlight the difference between the classical calculation techniques used in the a priori estimates widely found in the $\mathbb{L}^2$ case and the general $\mathbb{L}^p$ situation for any $p \geq 2$, as well as for some technical reasons (which will be discussed later), we first provide the a priori estimation for the case $p = 2$, followed by the case $p \in (2, +\infty)$. Recall that, in this case, $\mathscr{L}^p_Q=\mathbb{L}^2_Q$.
\subsubsection{$\mathbb{L}^p$-estimates for $p =2$}
The a priori estimates for the solutions of the BSDE \eqref{basic BSDE} in the case where $p = 2$ can be seen as a special case of the general result given in \cite[Proposition 1]{Badrelotmani2024}. By following a similar proof scheme, along with Remark \ref{rmq essential}, we can derive a similar result. Therefore, we omit the proof.

Let $(Y^1, Z^1, U^1)$ and $(Y^2, Z^2, U^2)$ be two $\mathbb{L}^2$-solutions of the BSDE \eqref{basic BSDE} associated with the parameters $(\xi_1, f_1)$ and $(\xi_2, f_2)$, respectively, satisfying assumptions \textsc{(H2)}--\textsc{(H6)}. Define $\widehat{\mathcal{R}} = \mathcal{R}^1 - \mathcal{R}^2$, where $\mathcal{R} \in \{Y, Z, U, \xi, f\}$.

\begin{proposition}\label{Propo p=2}
	For any $\beta> 0$, there exists a constant $\mathfrak{C}_{\epsilon,T}$ such that 
	\begin{equation*}
		\begin{split}
			&\mathbb{E}\left[\sup_{t \in [0,T]}e^{{\beta} A_t}\big|\widehat{Y}_{t}\big|^{2}\right]+\mathbb{E}\int_{0}^{T} e^{\beta A_s} \big|\widehat{Y}_s\big|^2 dA_s+\mathbb{E}\int_{0}^{T}e^{\beta A_s}\big\|\widehat{Z}_s\big\|^2ds+\mathbb{E}\int_{0}^{T}e^{\beta A_s} \big\|\widehat{U}_s\big\|^2_{\mathbb{L}^2_Q} ds\\
			&\leq \mathfrak{C}_{\epsilon,T} \left(\mathbb{E}e^{{\beta} A_T} \big|\widehat{\xi}\big|^2 + \mathbb{E}\int_{0}^{T}e^{{\beta} A_s} \left| \widehat{f}(s,Y^2_s,Z^2_s,U^2_s)\right|^2 ds\right).
		\end{split}
	\end{equation*}
\end{proposition}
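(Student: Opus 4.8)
\noindent The plan is to run a weighted It\^o computation on $e^{\beta A_t}|\widehat Y_t|^2$ and to absorb every state-, $Z$- and $U$-dependent contribution using the structural assumptions \textsc{(H2)}--\textsc{(H5)}. Write $\widehat g_s:=f_1(s,Y^1_s,Z^1_s,U^1_s)-f_2(s,Y^2_s,Z^2_s,U^2_s)$, so that $\widehat Y$ solves the BSDE with data $(\widehat\xi,\widehat g)$ and martingale parts $\widehat Z,\widehat U$. Since $A$ is continuous and of finite variation, the It\^o formula for RCLL semimartingales yields, for every $t\in[0,T]$,
\begin{align*}
	e^{\beta A_t}|\widehat Y_t|^2 &+\beta\int_t^T e^{\beta A_s}|\widehat Y_s|^2\,dA_s+\int_t^T e^{\beta A_s}\|\widehat Z_s\|^2\,ds+\int_t^T\!\!\int_E e^{\beta A_s}|\widehat U_s(e)|^2\,N(ds,de)\\
	&=e^{\beta A_T}|\widehat\xi|^2+2\int_t^T e^{\beta A_s}\widehat Y_{s^-}\widehat g_s\,ds-2\int_t^T e^{\beta A_s}\widehat Y_s\widehat Z_s\,dW_s-2\int_t^T\!\!\int_E e^{\beta A_s}\widehat Y_{s^-}\widehat U_s(e)\,\tilde N(ds,de).
\end{align*}
To legitimize the martingale property and the Burkholder--Davis--Gundy (BDG) inequality used below, I would first stop at the localizing times $\widehat\tau_n$ introduced in Remark \ref{rmq p} and remove the localization at the end by monotone and dominated convergence; all quantities are finite because $(\widehat Y,\widehat Z,\widehat U)\in\mathcal E^2_\beta(0,T)$.

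\noindent The core step is the pointwise estimate of the drift $2\widehat Y_{s^-}\widehat g_s$. Decomposing $\widehat g_s$ into a $y$-increment of $f_1$, a $(z,u)$-increment of $f_1$, and the source $\widehat f(s,Y^2_s,Z^2_s,U^2_s)$, assumption \textsc{(H2)} controls the first by $\alpha_s|\widehat Y_s|^2$, while \textsc{(H3)} bounds the second by $\eta_s\|\widehat Z_s\|+\delta_s\|\widehat U_s\|_{\mathbb{L}^2_Q}$. Young's inequality, together with $a_s^2\ge\epsilon$ from \textsc{(H5)}, gives
\[
	2\widehat Y_s\widehat g_s\le\big(2\alpha_s+2\eta_s^2+2\delta_s^2+a_s^2\big)|\widehat Y_s|^2+\tfrac12\|\widehat Z_s\|^2+\tfrac12\|\widehat U_s\|^2_{\mathbb{L}^2_Q}+\tfrac1\epsilon\big|\widehat f(s,Y^2_s,Z^2_s,U^2_s)\big|^2.
\]
Since $\eta_s^2+\delta_s^2\le a_s^2$, the $|\widehat Y_s|^2$-coefficient is at most $2\alpha_s+3a_s^2$; choosing $\varepsilon$ large enough in the reduction of Remark \ref{rmq essential} (so that $\alpha_s+\varepsilon a_s^2\le0$ with $\varepsilon\ge2$) makes it $\le-a_s^2$. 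This negative contribution, added to the $\beta\int e^{\beta A_s}|\widehat Y_s|^2\,dA_s$ already present, leaves the $dA$-integral on the left-hand side with a constant $\ge1$ that is \emph{independent of $\beta$}. Taking expectations then annihilates the $\tilde N$-stochastic integral, turns the $N$-integral into $\mathbb E\int_t^T e^{\beta A_s}\|\widehat U_s\|^2_{\mathbb{L}^2_Q}\,ds$ via the square-integrable martingale identity of Remark \ref{rmq p} (valid precisely because $p=2$), and lets me move the $\tfrac12\|\widehat Z_s\|^2$ and $\tfrac12\|\widehat U_s\|^2_{\mathbb{L}^2_Q}$ terms to the left. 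This already delivers the bounds for the $dA$-, $\mathcal H^2$- and $\mathcal L^2_{Q}$-norms and for $\mathbb E[e^{\beta A_t}|\widehat Y_t|^2]$, uniformly in $\beta>0$, with a constant of order $1/\epsilon$.

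\noindent It remains to replace the pointwise control of $\mathbb E[e^{\beta A_t}|\widehat Y_t|^2]$ by the $\sup_t$ norm. Returning to the localized identity, I take $\sup_{t\in[0,T]}$ and then expectations, and apply BDG to the two martingale terms. Each resulting quadratic-variation factor has the schematic form $\big(\int_0^T e^{2\beta A_s}|\widehat Y_s|^2(\|\widehat Z_s\|^2+\|\widehat U_s\|^2_{\mathbb{L}^2_Q})\,ds\big)^{1/2}$, which I dominate by $\sup_{s}\big(e^{\beta A_s/2}|\widehat Y_s|\big)\cdot\big(\int_0^T e^{\beta A_s}(\|\widehat Z_s\|^2+\|\widehat U_s\|^2_{\mathbb{L}^2_Q})\,ds\big)^{1/2}$ and then split by Young's inequality so that $\mathbb E\big[\sup_s e^{\beta A_s}|\widehat Y_s|^2\big]$ enters with a coefficient strictly below $1$ and is absorbed on the left. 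The remaining quadratic-variation integrals are exactly the $Z$- and $U$-norms bounded in the previous paragraph, so collecting constants produces the single constant $\mathfrak c_{\epsilon,T}$.

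\noindent The step I expect to be the main obstacle is this BDG absorption combined with localization: one must ensure the stopped $\sup$-terms are genuinely finite before absorbing them (so the inequality is not a vacuous $\infty\le\infty$), which is where membership in $\mathcal E^2_\beta(0,T)$ is used, and one must tune the Young constants so that the coefficient of the $\sup$-term stays strictly below $1$ for every $\beta>0$. The other delicate point, which keeps $\mathfrak c_{\epsilon,T}$ independent of $\beta$, is the passage between the $N$- and $\mathbb{L}^2_Q$-formulations of the jump term, available here only because $p=2$ makes $\mathfrak M$ square integrable.
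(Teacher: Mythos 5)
Your proposal is correct and follows essentially the same scheme the paper has in mind: the paper omits this proof, stating that it is a special case of \cite[Proposition 1]{Badrelotmani2024} combined with Remark \ref{rmq essential}, and the argument it alludes to is exactly your weighted It\^o computation on $e^{\beta A_t}|\widehat Y_t|^2$, the drift absorption via \textsc{(H2)}--\textsc{(H3)}, Young's inequality and $a_s^2\ge\epsilon$, the identification $\mathbb{E}\int\!\!\int e^{\beta A_s}|\widehat U_s(e)|^2N(ds,de)=\mathbb{E}\int e^{\beta A_s}\|\widehat U_s\|^2_{\mathbb{L}^2_Q}ds$ valid for $p=2$, and the BDG absorption of the supremum (compare the paper's own displays \eqref{Ito p=2} and \eqref{generator estimation-v1} in the $p>2$ section). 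Your localization and finiteness caveats before absorbing the $\sup$-term are exactly the right points to flag.
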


In view of Remark \ref{rmq p}, the estimate of Proposition \ref{Propo p=2} also covers the particular case $p=2$.

\subsubsection{$\mathbb{L}^p$-estimates for $p \in  (2, +\infty)$}
Until the end of this section, we assume that $p > 2$. Before stating the a priori estimation of the $\mathbb{L}^p$-solutions, we shall need an auxiliary result borrowed from \cite[Lemma A.4]{Yao2010}. For the sake of completeness, we recall it and provide its detailed proof.

\begin{lemma}\label{**Lemma}
	Let $p \in (2,+\infty)$. For any $x,y \in \mathbb{R}^d$, we have
	$$\int_{0}^{1}(1-r)\left|x+ry\right|^{p-2}dr \geq 3^{1- p} \left|x\right|^{p-2}.$$
\end{lemma}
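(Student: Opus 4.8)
The plan is to reduce the vector inequality to a scalar one and then bound the integrand from below on a suitable \emph{good} set of values of $r$. First I would dispose of the trivial case $x=0$: for $p>2$ the right-hand side vanishes and the left-hand side is nonnegative, so there is nothing to prove. By positive homogeneity (both sides scale like the $(p-2)$-power of a length) I may normalize $|x|=1$. The key preliminary reduction is the reverse triangle inequality $|x+ry|\ge\big||x|-r|y|\big|$, which together with the monotonicity of $s\mapsto s^{p-2}$ on $[0,+\infty)$ (valid since $p-2>0$) yields $|x+ry|^{p-2}\ge\big||x|-r|y|\big|^{p-2}$. Setting $t:=|y|/|x|$ and factoring out $|x|^{p-2}$, it therefore suffices to prove the scalar estimate $\int_0^1(1-r)|1-rt|^{p-2}\,dr\ge 3^{1-p}$ for every $t\ge 0$.

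The main idea is a level-set (``good set versus bad set'') argument. I would introduce $S:=\{r\in[0,1]:|1-rt|\ge \tfrac13\}$ and its complement $S^c$. On $S$ the monotonicity of the power gives the pointwise bound $|1-rt|^{p-2}\ge(\tfrac13)^{p-2}=3^{2-p}$, so discarding the (nonnegative) contribution of $S^c$ gives $\int_0^1(1-r)|1-rt|^{p-2}\,dr\ge 3^{2-p}\int_S(1-r)\,dr$. Since $\int_0^1(1-r)\,dr=\tfrac12$, the whole problem reduces to showing that the weight carried by the bad set is small, namely $\int_{S^c}(1-r)\,dr\le \tfrac16$; this forces $\int_S(1-r)\,dr\ge\tfrac13$ and hence the desired lower bound $3^{2-p}\cdot\tfrac13=3^{1-p}$.

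The estimate on $S^c$ is where the sharp constant is produced, and it is the one genuinely delicate step. Unwinding the definition, $|1-rt|<\tfrac13$ forces $r$ into $(\tfrac{2}{3t},\tfrac{4}{3t})$, so $S^c\subseteq(\tfrac{2}{3t},\tfrac{4}{3t})\cap[0,1]$, an interval of length at most $\tfrac{2}{3t}$ located near $r\approx 1/t$. Because the weight $1-r$ is decreasing, $\int_{S^c}(1-r)\,dr$ is largest when this interval lies entirely in $[0,1]$, i.e. for $t\ge\tfrac43$, where an explicit integration gives $\int_{S^c}(1-r)\,dr=\tfrac{2}{3t}-\tfrac{2}{3t^2}$. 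I would then maximize this elementary function: its derivative vanishes at $t=2$, giving the maximal value exactly $\tfrac16$; for $\tfrac23<t<\tfrac43$ the interval is clipped at $r=1$ and the weight is even smaller (bounded by $\tfrac18$), while for $t\le\tfrac23$ the bad set is empty. Thus $\sup_{t\ge0}\int_{S^c}(1-r)\,dr=\tfrac16$, attained at $t=2$, which both closes the argument and explains why $3^{1-p}$ is the natural (and sharp-by-this-method) constant.

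The hardest part is recognizing that the estimate must be organized through the weight of the sublevel set $\{|1-rt|<\tfrac13\}$ rather than through a naive truncation to a fixed subinterval: crude interval restrictions lose too much and fail to reach $3^{1-p}$ as $p\downarrow 2$, where the true integral stays near $\tfrac12$, whereas the level-set bound is tight precisely at $t=2$ and behaves uniformly in $p$. Once the threshold $\tfrac13$ and the bound $\tfrac16$ on the bad-set weight are established, the remaining computations are entirely routine.
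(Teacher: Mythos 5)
Your proposal is correct and is essentially the paper's own argument: the same reverse triangle inequality, the same threshold $\tfrac13|x|$, and the same bad interval $\bigl(\tfrac{2}{3t},\tfrac{4}{3t}\bigr)=(r_0,2r_0)$ with $r_0=\tfrac{2}{3}\tfrac{|x|}{|y|}$, followed by the same case analysis according to whether that interval is contained in, straddles, or misses $[0,1]$. Your maximization of the bad-set weight $\tfrac{2}{3t}-\tfrac{2}{3t^2}$ (maximum $\tfrac16$ at $t=2$) is precisely the paper's completing-the-square step $\tfrac32\bigl(r_0-\tfrac13\bigr)^2\ge 0$ in different notation, so the two proofs coincide up to presentation.
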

\begin{proof}
	For $y=0$, we have $\int_{0}^{1}(1-r)\left|x+ry\right|^{p-2}dr=\frac{1}{2}\left|x\right|^{p-2}$. As, $3^{p-2} \geq 1$, we have $3^{p-1} > 2$ then $\frac{1}{2} > 3^{1-p}$ and the result follows.\\ 
	Now, we assume that $y \neq 0$ and set $r_0:=\frac{2}{3}\frac{\left|x\right|}{\left|y\right|}$. Using the reverse triangle inequality $\left|\left|x\right|-r\left|y\right|\right|\leq \left|x+ry\right|$ and the definition of $r_0$, we get
	\begin{enumerate}
		\item \label{1}  For any $r \in [0,r_0]$, we have $r \left|y\right| \leq  r_0 \left|y\right|=\frac{2}{3}\left|x\right|$, then $r \left|y\right|-\left|x\right| \leq -\frac{1}{3}\left|x\right|$, thus $\frac{1}{3}\left|x\right| \leq \left|x\right|-r \left|y\right| \leq \left|\left|x\right|-r \left|y\right| \right| \leq \left|x+ry\right|$. 
		
		\item \label{2}   For any $r \in [2r_0,+\infty)$, we have $r \left|y\right| \geq  2 r_0 \left|y\right|=\frac{4}{3}\left|x\right|$, then $r \left|y\right|-\left|x\right| \geq \frac{1}{3}\left|x\right|$, thus $\frac{1}{3}\left|x\right| \leq r \left|y\right|-\left|x\right|\leq \left| r \left|y\right|-\left|x\right| \right| \leq \left|x+ry\right|$.
	\end{enumerate}
	From \ref{1} and \ref{2}, we derive that $3^{2-p}\left|x\right|^{p-2} \leq \left|x+ry\right|^{p-2}$ for any $ r \in [0,r_0] \cup [2r_0,+\infty)$.\\ 
	We can discuss three cases:
	\begin{itemize}
		\item If $r_0 <\frac{1}{2}$. In this case, we have 
		$
		\int_{0}^{1}(1-r)\left|x+ry\right|^{p-2}dr=\int_{0}^{r_0}(1-r)\left|x+ry\right|^{p-2}dr+\int_{r_0}^{2 r_0}(1-r)\left|x+ry\right|^{p-2}dr+\int_{2r_0}^{1}(1-r)\left|x+ry\right|^{p-2}dr\geq \int_{0}^{r_0}(1-r)\left|x+ry\right|^{p-2}dr+\int_{2r_0}^{1}(1-r)\left|x+ry\right|^{p-2}dr\geq 3^{2-p}\left|x\right|^{p-2}\left(\frac{3}{2}r_0^2-r_0+\frac{1}{2}\right)
		$. As $\frac{3}{2}r_0^2-r_0+\frac{1}{6}=\frac{3}{2}\left(r_0-\frac{1}{3}\right)^2 \geq 0$, we derive $\left(\frac{3}{2}r_0^2-r_0+\frac{1}{2}\right)\geq \frac{1}{3}$. Thus, $
		\int_{0}^{1}(1-r)\left|x+ry\right|^{p-2}dr\geq \frac{1}{3}3^{2-p}\left|x\right|^{p-2}=3^{1-p}\left|x\right|^{p-1}$.
		
		\item If $\frac{1}{2} \leq r_0 <1$. In this case, we have $
		\int_{0}^{1}(1-r)\left|x+ry\right|^{p-2}dr \geq \int_{0}^{r_0}(1-r)\left|x+ry\right|^{p-2}dr \geq \frac{1}{2} 3^{2-p}\left|x\right|^{p-2}\geq  3^{1-p}\left|x\right|^{p-2}$.
		
		\item If $r_0 \geq 1$. In this case, from \eqref{1}, we have $\int_{0}^{1}(1-r)\left|x+ry\right|^{p-2}dr \geq 3^{1-p}\left|x\right|^{p-2}$.
	\end{itemize}
	Completing the proof.
\end{proof}


Now, we are ready to provide the main result of this part.\\
Let $(Y^1, Z^1, U^1)$ and $(Y^2, Z^2, U^2)$ be two $\mathbb{L}^p$-solutions of the BSDE \eqref{basic BSDE} associated with the parameters $(\xi_1, f_1)$ and $(\xi_2, f_2)$, respectively, satisfying assumptions \textsc{(H2)}-\textsc{(H6)}. Define $\widehat{\mathcal{R}} = \mathcal{R}^1 - \mathcal{R}^2$, where $\mathcal{R} \in \{Y, Z, U, \xi, f\}$.

There is no loss of generality in assuming that $f_1$ and $f_2$ satisfy these assumptions with the same regularity coefficients. Indeed, if the two generators satisfy the assumptions (H2)--(H4) with different coefficients $(\eta^i_t,\delta^i_t,\phi^i_t)_{t \leq T}$ for $i \in \{1,2\}$, one may replace these coefficients by their corresponding maxima, for instance
$$
\eta_t=\eta_t^1\vee\eta_t^2,\qquad 
\delta_t=\delta_t^1\vee\delta_t^2,\qquad 
\phi_t=\phi_t^1\vee\phi_t^2,
$$
and similarly for the other coefficients. This provides a common family of coefficients for both generators. The only price to pay is that the associated increasing process $A$ may become larger, so that the integrability assumptions on $\xi_1$, $\xi_2$ and the corresponding free terms have to be understood with respect to this common exponential weight.

\begin{proposition}\label{Propo p sup a 2}
	Let $p \in (2,+\infty)$. For any ${\beta} >0$, there exists a constants $\mathfrak{C}_{p,\epsilon,T}$ such that  
	\begin{equation}\label{Verified}
		\begin{split}
			&\mathbb{E}\left[\sup_{t \in [0,T]}e^{{\beta} A_t}\big|\widehat{Y}_{t}\big|^{p}\right]+\mathbb{E}\int_{0}^{T} e^{\beta A_s} \big|\widehat{Y}_s\big|^p dA_s\\
			&\leq \mathfrak{C}_{p,\epsilon,T} \left(\mathbb{E}\left[ e^{{\beta} A_T} \big|\widehat{\xi}\big|^p\right]  + \mathbb{E}\int_{0}^{T}e^{{\beta} A_s} \left| \widehat{f}(s,Y^2_s,Z^2_s,U^2_s)\right|^p ds\right).
		\end{split}
	\end{equation}
	Moreover, for any ${\beta}>0$ such that \eqref{Verified} holds, we have
	\begin{equation*}
		\begin{split}
			&\mathbb{E}\left[\left(\int_{0}^{T}e^{\beta A_s}\big\|\widehat{Z}_s\big\|^2ds\right)^{\frac{p}{2}}\right]+\mathbb{E}\left[\left(\int_{0}^{T}e^{\beta A_s} \big\|\widehat{U}_s\big\|^2_{\mathbb{L}^2_Q} ds\right)^{\frac{p}{2}}\right]+\mathbb{E}\left[\left(\int_{0}^{T}e^{\beta A_s} \int_{E}\big|\widehat{U}_s(e)\big|^2N(ds,de)\right)^{\frac{p}{2}}\right]\\
			&\leq \mathfrak{C}_{p,\epsilon,T}  \left(\mathbb{E}\left[ e^{(p-1){\beta} A_T} \big|\widehat{\xi}\big|^p\right]  + \mathbb{E}\int_{0}^{T}e^{(p-1){\beta} A_s} \left| \widehat{f}(s,Y^2_s,Z^2_s,U^2_s)\right|^p ds\right).
		\end{split}
	\end{equation*}
\end{proposition}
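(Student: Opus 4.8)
The plan is to treat the two displays separately, deriving the first from an Itô expansion of $e^{\beta A_t}|\widehat Y_t|^p$ and the second from the $\mathbb L^2$-type expansion of $e^{\beta A_t}|\widehat Y_t|^2$ raised to the power $p/2$. Throughout, $\widehat Y$ solves the BSDE with terminal value $\widehat\xi$ and driver $\widehat f(s):=f_1(s,Y^1_s,Z^1_s,U^1_s)-f_2(s,Y^2_s,Z^2_s,U^2_s)$, which I will systematically split as
\[
\widehat f(s)=\big[f_1(s,Y^1_s,Z^1_s,U^1_s)-f_1(s,Y^2_s,Z^2_s,U^2_s)\big]+\widehat f(s,Y^2_s,Z^2_s,U^2_s),
\]
bounding the bracket by \textsc{(H2)} in $y$ and \textsc{(H3)} in $(z,u)$, and keeping the last (data) term as the source in the estimates. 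A standard localization (as in Remark \ref{rmq p}) justifies all the martingale expectations below.

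\textbf{First display.} Since $p>2$, the map $x\mapsto|x|^p$ is $C^2$, so the Itô--Meyer formula applies to $e^{\beta A_t}|\widehat Y_t|^p$. I would keep on the left the term $\beta\int e^{\beta A_s}|\widehat Y_s|^p\,dA_s$ coming from $dA$, the continuous second-order term (bounded below by $\tfrac p2\int e^{\beta A_s}|\widehat Y_s|^{p-2}\|\widehat Z_s\|^2ds$), and the jump compensator, for which Lemma \ref{**Lemma} furnishes the crucial lower bound $p\,3^{1-p}\int\!\!\int e^{\beta A_s}|\widehat Y_{s-}|^{p-2}|\widehat U_s(e)|^2Q(s,de)\eta(s)ds$. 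After invoking \textsc{(H2)}--\textsc{(H3)}, Young's inequality absorbs the $\eta_s\|\widehat Z_s\|$ and $\delta_s\|\widehat U_s\|_{\mathbb L^2_Q}$ cross-terms into these two positive quadratic terms; the leftover $|\widehat Y_s|^p(\eta_s^2+\delta_s^2)\le|\widehat Y_s|^pa_s^2=|\widehat Y_s|^p\,dA_s/ds$, together with the monotonicity contribution $p\alpha_s|\widehat Y_s|^p$, is rendered nonpositive by the normalization $\alpha_s+\varepsilon a_s^2\le0$ of Remark \ref{rmq essential} upon choosing $\varepsilon$ large enough (depending only on $p$). It remains to control $\int e^{\beta A_s}|\widehat Y_s|^{p-1}|\widehat f(s,Y^2_s,Z^2_s,U^2_s)|ds$; factoring $e^{\beta A_s}=e^{\frac{p-1}p\beta A_s}e^{\frac1p\beta A_s}$ and applying Young with exponents $\tfrac p{p-1},p$ produces $\lambda\sup_s e^{\beta A_s}|\widehat Y_s|^p$ plus, after a Hölder step in time, a constant times $\int e^{\beta A_s}|\widehat f|^pds$ — this is where the weight-$1$ data term of \eqref{Verified} and the factor $T$ in the constant arise. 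Taking the supremum in $t$, then expectations, and using Burkholder--Davis--Gundy on the $dW$- and $\tilde N$-martingales (each quadratic variation factors as $(\sup_s e^{\beta A_s}|\widehat Y_s|^p)^{1/2}$ times a positive quadratic term already on the left) lets the small $\sup$- and quadratic-remainders be absorbed, yielding \eqref{Verified} with a constant $\mathfrak c_{p,\epsilon,T}$ independent of $\beta$; the lower bound $a_s^2\ge\epsilon$ from \textsc{(H5)} is used precisely to control the $a_s^{-2(p-1)}$ weight appearing in the driver Young step.

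\textbf{Second display.} Here I would instead apply Itô to $e^{\beta A_t}|\widehat Y_t|^2$, whose jump part yields $\int_0^T\!\int_E e^{\beta A_s}|\widehat U_s(e)|^2 N(ds,de)$ directly on the left, alongside $\int e^{\beta A_s}\|\widehat Z_s\|^2ds$ and $\beta\int e^{\beta A_s}|\widehat Y_s|^2\,dA_s$. After the same \textsc{(H2)}--\textsc{(H3)} splitting and Young absorption, I raise the resulting pathwise inequality to the power $p/2$ and take expectations. The BDG step now produces $(\sup_s e^{\beta A_s}|\widehat Y_s|^2)^{p/2}=\sup_s e^{\frac p2\beta A_s}|\widehat Y_s|^p$, a weight-$\tfrac p2$ quantity that I bound by invoking \eqref{Verified} with $\beta$ replaced by $\tfrac p2\beta$; since $\tfrac p2\le p-1$ for $p\ge2$, the corresponding data weights are dominated by the claimed $(p-1)$. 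The driver term appears now as $\int e^{\beta A_s}|\widehat Y_s|\,|\widehat f(s,Y^2_s,\dots)|ds$ with $|\widehat Y_s|$ only to the first power; the analogous splitting $e^{\beta A_s}=e^{\frac1p\beta A_s}e^{\frac{p-1}p\beta A_s}$ forces the extra weight onto the data and, after Hölder in time, yields exactly the weight-$(p-1)$ term $\int e^{(p-1)\beta A_s}|\widehat f|^pds$ while leaving a weight-$1$ factor $(\sup_s e^{\beta A_s}|\widehat Y_s|^p)^{1/2}$ again controlled by \eqref{Verified}. Finally the three target quantities are reconciled through Remark \ref{rmq p}: for $p\ge2$ inequality \eqref{CE} dominates the $Q$-compensator norm by the $N$-quadratic-variation norm, so it suffices to close the estimate on the $N$-form (which also legitimizes absorbing the $\delta_s\|\widehat U_s\|_{\mathbb L^2_Q}$ remainder).

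\textbf{Main obstacle.} I expect the delicate point to be the bookkeeping of the exponential weights: one must choose the Young/Hölder exponents so that the final constant stays independent of $\beta$, the $Y$-dependent remainders reduce \emph{exactly} to the already-established estimate \eqref{Verified} (possibly at the shifted level $\tfrac p2\beta$), and the data appear at the sharp weights — weight $1$ in the first display and weight $(p-1)$ in the second, the latter being an unavoidable consequence of $|\widehat Y|$ entering the $\mathbb L^2$-expansion only linearly. The two genuinely analytic inputs that make this work are the jump lower bound of Lemma \ref{**Lemma} in the first display and the passage between the $N$- and $Q$-forms of the $U$-integral via Remark \ref{rmq p} in the second.
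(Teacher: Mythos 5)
Your proposal is correct and follows essentially the same route as the paper: Itô's formula applied to $e^{\beta A_t}|\widehat Y_t|^p$ with the jump compensator bounded below via Lemma \ref{**Lemma} for the first display, then the $\mathbb{L}^2$-expansion of $e^{\beta A_t}|\widehat Y_t|^2$ raised to the power $p/2$, closed by BDG, the already-established estimate at a shifted weight, and the passage between the $N$- and $Q$-forms from Remark \ref{rmq p} for the second. The only (immaterial) differences are that the paper handles the driver term as $|\widehat Y|^{p-2}|\widehat f|^2$ with Young exponents $\bigl(\tfrac{p}{p-2},\tfrac p2\bigr)$ and kills the residual $\int e^{\beta A_s}|\widehat Y_s|^p\,ds$ with an auxiliary discount factor $e^{\mu t}$, $\mu=p-1$, rather than your $|\widehat Y|^{p-1}|\widehat f|$ splitting and small-constant absorption into the supremum, and it runs the BDG step at weight $(p-1)\beta$ rather than your $\tfrac p2\beta$.
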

\begin{proof}
	For $p > 2$, define the $\mathcal{C}^{1,2}$-function $(t,x) \mapsto \Theta(t,y)$  on $\left[0,T\right] \times \mathbb{R}^d$ by $\Theta(t,y)=e^{\beta A_t}\theta(y)$ and $\theta(y):=\left|y\right|^p$. Writing $\left|y\right|^p =\left(\left|y\right|^{2}\right)^{\frac{p}{2}}$ (to derive the partial derivation w.r.t. $y_j$), we point out that 
	$$
	\dfrac{\partial \Theta}{\partial t}(t,y)=\beta e^{\beta A_t}\left|y\right|^p,\quad \frac{\partial \Theta}{\partial y_j}(t,y)=pe^{\beta A_t}\left|y\right|^{p-2}y_j. 
	$$
	Similarly, with the expression $\left|y\right|^{p-2} =\left(\left|y\right|^{2}\right)^{\frac{p-2}{2}}$, we have
	$$
	\frac{\partial^2 \Theta}{\partial y_j \partial y_i}(t,y)=pe^{\beta A_t}\left|y\right|^{p-2} \mathbb{I}_{i,j}+p(p-2)e^{\beta A_t}y_j y_i\left|y\right|^{p-4}.
	$$
	Thus
	\begin{equation*}
		\begin{split}
			\nabla \Theta(t,y)&=e^{\beta A_t} y \left|y\right|^{p-2}=e^{\beta A_t} \nabla \theta(y),\\
			\quad D^2 \Theta(t,y)&=pe^{\beta A_t}\left|y\right|^{p-2}I+p(p-2)e^{\beta A_t}\left|y\right|^{p-2} y \otimes y=e^{\beta A_t} D^2 \theta(y),
		\end{split}
	\end{equation*}
	where $I$ is the $d \times d$ identity matrix, $D^2 \Theta$ (resp. $\nabla \Theta$) is the Hessian $d \times d$-matrix (resp. gradient) of $\Theta$ w.r.t. the second variable $y$, and $y \otimes y$ is the tensor product of $y$.

	As $p > 2$, we can apply It\^o's formula (see, e.g., \cite[Theorem 5.1, pp. 66-67]{IkedaWatanabe1989} or \cite[Theorem 32, pp. 78-79]{Protter2004}) with the function $\Theta(t,y)$  to the semimartingale $\big(e^{\beta A_t+\mu t} \left|\widehat{Y}_t\right|^p\big)_{t \leq T}$ for $\beta, \mu >0$, where $Y$ is the state processes given by the BSDE \eqref{basic BSDE} and the parameter $\mu$ to be chosen explicitly later. We then  have
	\begin{equation}\label{Ito p > 2}
		\begin{split}
			e^{\beta A_T+\mu T} \big|\widehat{\xi}\big|^p =&e^{\beta A_t+\mu t} \big|\widehat{Y}_t\big|^p-p\int_{t}^{T}e^{\beta A_s+\mu s}  \big|\widehat{Y}_s\big|^{p-2} \widehat{Y}_s \left(f_1(s,Y^1_s,Z^1_s,U^1_s)-f_2(s,Y^2_s,Z^2_s,U^2_s)\right) ds\\
			&+\beta \int_{t}^{T}e^{\beta A_s+\mu s} \big|\widehat{Y}_s\big|^p dA_s+\mu\int_{t}^{T}e^{\beta A_s+\mu s} \big|\widehat{Y}_s\big|^p ds +\frac{1}{2}\int_{t}^{T}e^{\beta A_s+\mu s} \mbox{Trace}(D^2\theta(\widehat{Y}_s)\widehat{Z}_s \widehat{Z}^\ast_s)ds\\
			&+p\int_{t}^{T}e^{\beta A_s+\mu s} \widehat{Y}_{s-} \big|\widehat{Y}_{s-}\big|^{p-2}  \widehat{Z}_s dW_s +p\int_{t}^{T}e^{\beta A_s+\mu s} \int_{E}\widehat{Y}_{s-} \big|\widehat{Y}_{s-}\big|^{p-2}  \widehat{U}_s(e) \tilde{N}(ds,de) \\
			&+\int_{t}^{T}e^{\beta A_s+\mu s}\int_{E}\left(\big|\widehat{Y}_{s-}+\widehat{U}_s(e)\big|^p-\big|\widehat{Y}_{s-}\big|^p-p\widehat{Y}_{s-}\big|\widehat{Y}_{s-}\big|^{p-2}\widehat{U}_s(e)\right)N(ds,de).
		\end{split}
	\end{equation}
	Using the assumptions (H2) and (H3) on the driver $f$ along with the basic inequality $ab \leq \frac{1}{2 \varepsilon}a^2+\frac{\varepsilon}{2}b^2$ for any $\varepsilon>0$, Cauchy-Schwarz inequality and Remark \ref{rmq essential}, we have
	\begin{equation}\label{generator estimation}
		\begin{split}
			&\widehat{Y}_s \left(f_1(s,Y^1_s,Z^1_s,U^1_s)-f_2(s,Y^2_s,Z^2_s,U^2_s)\right)  \\ 
			&\leq \big|\widehat{Y}_s \big|\left(\alpha_s\big|\widehat{Y}_s\big|+\eta_s \big\|\widehat{Z}_s \big\|+\delta_s \big\|\widehat{U}_s \big\|_{\mathbb{L}^2_Q} \right)+\frac{1}{2}\big| \widehat{f}(s,Y^2_s,Z^2_s,U^2_s) \big|^2 \\
			&\leq \left(\alpha_s+\dfrac{\eta^2_s+\delta_s^2}{2 \varepsilon}+\dfrac{1}{2}\right)\big| \widehat{Y}_s\big|^2  +\dfrac{\epsilon}{2}\left(\big\|\widehat{Z}_s \big\|^2+\big\|\widehat{U}_s \big\|^2_{\mathbb{L}^2_Q}+\left| \widehat{f}(s,Y^2_s,Z^2_s,U^2_s)\right|^2 \right)\\
			& \leq \dfrac{1}{2}\big| \widehat{Y}_s\big|^2 +\dfrac{\epsilon}{2}\left(\big\|\widehat{Z}_s \big\|^2+\big\|\widehat{U}_s \big\|^2_{\mathbb{L}^2_Q} \right)+\frac{1}{2}\left|\widehat{f}(s,Y^2_s,Z^2,U^2_s) \right|^2.
		\end{split}
	\end{equation}
For the formulation of the term
$$
\operatorname{Trace}\left(D^2\theta(\widehat{Y}_s)\widehat{Z}_s\widehat{Z}_s^\ast\right),
$$
we first define the $\mathbb{R}^{d\times d}$-valued predictable process $(\Lambda_s)_{s\leq T}$ by
$$
\Lambda_s:=\widehat{Z}_s\widehat{Z}_s^\ast.
$$
Since $e^{\beta A_s}$ is a positive real-valued random variable, it does not affect the matrix formulation. Clearly, for each $s\in[0,T]$, $\Lambda_s$ is a symmetric positive semi-definite matrix, and we have
	\begin{equation*}
		\begin{split}
			\mbox{Trace}(D^2\theta(\widehat{Y}_s)\widehat{Z}_s \widehat{Z}^\ast_s)
			&=\sum_{i,j=1}^d\ \left(p\big|\widehat{Y}_s\big|^{p-2} \mathbb{I}_{i,j}+p(p-2)\widehat{Y}^j_s \widehat{Y}^i_s\big|\widehat{Y}_s\big|^{p-4}\right)\Lambda^{j,i}_s \\
			&=p\big|\widehat{Y}_s\big|^{p-2} \mbox{Trace}(\Lambda_s)+p(p-2)\big|\widehat{Y}_s\big|^{p-4}\sum_{i,j=1}^d \widehat{Y}^i_s\Lambda^{i,j}_s \widehat{Y}^j_s \\
			&=p\big|\widehat{Y}_s\big|^{p-2} \big\|\widehat{Z}_s\big\|^2+p(p-2)\big|\widehat{Y}_s\big|^{p-4}\widehat{Y}^\ast_s  \Lambda_s \widehat{Y}_s.
		\end{split}
	\end{equation*}  
	Therefore,
	\begin{equation}\label{Brownian W geq}
		e^{\beta A_s} 	\mbox{Trace}(D^2\theta(\widehat{Y}_s)\widehat{Z}_s \widehat{Z}^\ast_s)\geq pe^{\beta A_s} \big|\widehat{Y}_s\big|^{p-2} \big\|\widehat{Z}_s\big\|^2.
	\end{equation}
	Let us deal with the jump part in the formula \eqref{Ito p > 2}. To this end, we use Taylor expansion with integral remainder to the function $r \mapsto \theta(x+ry)$ on $[0,1]$ for any $x,y \in \mathbb{R}^d$, which yields to the following
	\begin{equation*}
		\begin{split}
			\theta(x+y)-\theta(x)-\nabla \theta(x)y
			&=\int_{0}^{1} y^\ast D^2 \theta(x+ry)y (1-r) dr\\
			&=p\left| y \right|^2  \int_{0}^{1} \left|x+ry \right|^{p-2}(1-r)dr+p(p-2)\int_{0}^{1} \left|y(x+ry)\right|^2 \left|x+ry\right|^{p-4} (1-r) dr\\
			&\geq  p\left| y \right|^2  \int_{0}^{1} \left|x+ry \right|^{p-2}(1-r)dr
		\end{split}
	\end{equation*} 
	Now applying Lemma \ref{**Lemma} to the last integral term, we derive 
	$$
	p\left| y \right|^2  \int_{0}^{1} \left|x+ry \right|^{p-2}(1-r)dr \geq p 3^{1-p} \left|y\right|^2 \left|x\right|^{p-2}.
	$$
	Henceforth, for any $x,y \in \mathbb{R}^d$, we have
	\begin{equation*}
		\theta(x+y)-\theta(x)-\nabla \theta(x)y \geq p 3^{1-p} \left|y\right|^2 \left|x\right|^{p-2}.
	\end{equation*}
	Applying this inequality to the term $\big|\widehat{Y}_{s-}+\widehat{U}_s(e)\big|^p-\big|\widehat{Y}_{s-}\big|^p-p\widehat{Y}_{s-}\big|\widehat{Y}_{s-}\big|^{p-2}\widehat{U}_s(e)$ with $x=\widehat{Y}_{s-}(\omega)$ and $y=\widehat{U}_s(\omega,e)$, we derive 
	\begin{equation*}
		\big|\widehat{Y}_{s-}+\widehat{U}_s(e)\big|^p-\big|\widehat{Y}_{s-}\big|^p-p\widehat{Y}_{s-}\big|\widehat{Y}_{s-}\big|^{p-2}\widehat{U}_s(e) \geq  p 3^{1-p} \big|\widehat{U}_s(e)\big|^2 \big|\widehat{Y}_{s-}\big|^{p-2}~\mbox{a.s.}
	\end{equation*}
Therefore, using the decomposition of $N$ as the sum of its compensated random measure $\widetilde{N}$ and its dual predictable projection, we have
	\begin{equation}\label{ineqyality}
		\begin{split}
			&p\int_{t}^{T}e^{\beta A_s+\mu s} \int_{E}\widehat{Y}_{s-} \big|Y_{s-}\big|^{p-2}  \widehat{U}_s(e) \tilde{N}(ds,de)\\
			&+\int_{t}^{T}e^{\beta A_s+\mu s}\int_{E}\left(\big|\widehat{Y}_{s-}+\widehat{U}_s(e)\big|^p-\big|\widehat{Y}_{s-}\big|^p-p\widehat{Y}_{s-}\big|\widehat{Y}_{s-}\big|^{p-2}\widehat{U}_s(e)\right)N(ds,de)\\
			&=\int_{t}^{T}e^{\beta A_s+\mu s}\int_{E}\left(\big|\widehat{Y}_{s-}+\widehat{U}_s(e)\big|^p-\big|\widehat{Y}_{s-}\big|^p-p\widehat{Y}_{s-}\big|\widehat{Y}_{s-}\big|^{p-2}\widehat{U}_s(e)\right)Q(de)ds\\
			&+\int_{t}^{T}e^{\beta A_s+\mu s}\int_{E}\left(\big|\widehat{Y}_{s-}+\widehat{U}_s(e)\big|^p-\big|\widehat{Y}_{s-}\big|^p\right)\tilde{N}(ds,de)\\
			&\geq \int_{t}^{T}e^{\beta A_s+\mu s}\int_{E}\left(\big|\widehat{Y}_{s-}+\widehat{U}_s(e)\big|^p-\big|\widehat{Y}_{s-}\big|^p\right)\tilde{N}(ds,de)+p 3^{1-p} \int_{t}^{T}e^{\beta A_s+\mu s}\int_{E} \big|\widehat{Y}_{s-}\big|^{p-2} \big\|\widehat{U}_s\big\|^2_{\mathbb{L}^2_Q} ds.
		\end{split}
	\end{equation}
	Plugging \eqref{generator estimation}, \eqref{Brownian W geq} and \eqref{ineqyality} into \eqref{Ito p > 2}, we get
	\begin{equation}\label{Ito p > 2--1}
		\begin{split}
			&e^{\beta A_t+\mu T} \big|\widehat{Y}_t\big|^p+\beta\int_{t}^{T} e^{\beta A_s+\mu s} \big|\widehat{Y}_s\big|^p dA_s+( 3^{1-p}p+\frac{p \varepsilon}{2})\int_{t}^{T} e^{\beta A_s-\mu s}\big|\widehat{Y}_s\big|^{p-2}\left\{\big\|\widehat{Z}_s\big\|^2+\big\|\widehat{U}_s\big\|^2_{\mathbb{L}^2_Q}\right\}ds\\
			&\leq e^{\beta A_T+\mu T} \big|\widehat{\xi}\big|^p +\frac{p }{2}\int_{t}^{T}e^{\beta A_s+\mu s} \big|\widehat{Y}_s\big|^{p-2}\left|\widehat{f}(s,Y^2_s,Z^2_s,U^2_s) \right|^2ds+\left(\frac{p}{2}-\mu\right)\int_{t}^{T}e^{\beta A_s+\mu s} \big|\widehat{Y}_s\big|^{p}ds\\
			&-p\int_{t}^{T}e^{\beta A_s+\mu s} \widehat{Y}_{s-} \big|\widehat{Y}_{s-}\big|^{p-2}  \widehat{Z}_s dW_s-\int_{t}^{T}e^{\beta A_s+\mu s}\int_{E}\left(\big|\widehat{Y}_{s-}+\widehat{U}_s(e)\big|^p-\big|\widehat{Y}_{s-}\big|^p\right)\tilde{N}(ds,de).
		\end{split}
	\end{equation}
	As $p>2$, we can apply Young's inequality, which yields to the following
	\begin{equation}\label{Vendr Palourda}
		\begin{split}
			e^{\beta A_s+\mu s} \big|\widehat{Y}_s\big|^{p-2}\left| \widehat{f}(s,Y^2_s,Z^2_s,U^2_s)\right|^2ds
			&= e^{\frac{(p-2)}{p} (\beta A_s+\mu s)} \big|\widehat{Y}_s\big|^{p-2}\left(e^{ \frac{2}{p} (\beta A_s+\mu s)}\left| \widehat{f}(s,Y^2_s,Z^2_s,U^2_s)\right|^2\right)ds\\
			&\leq \frac{p-2}{p}e^{\beta A_s+\mu s} \big|\widehat{Y}_s\big|^{p} ds +\frac{2}{p}e^{\beta A_s+\mu s}\left| \widehat{f}(s,Y^2_s,Z^2_s,U^2_s)\right|^pds.
		\end{split}
	\end{equation}
	Putting \eqref{Vendr Palourda} into \eqref{Ito p > 2--1}, yields
	\begin{equation*}
		\begin{split}
			&e^{\beta A_t+\mu t} \big|\widehat{Y}_t\big|^p+\beta\int_{t}^{T} e^{\beta A_s+\mu s} \big|\widehat{Y}_s\big|^p dA_s
			+(3^{1-p}p-\frac{p \varepsilon}{2})\int_{t}^{T} e^{\beta A_s+\mu s}\big|\widehat{Y}_s\big|^{p-2}\left\{\big\|\widehat{Z}_s\big\|^2+\big\|\widehat{U}_s\big\|^2_{\mathbb{L}^2_Q}\right\}ds\\
			&\leq e^{\beta A_T+\mu T} \big|\widehat{\xi}\big|^p + \int_{t}^{T}e^{\beta A_s+\mu s} \left|\widehat{f}(s,Y^2_s,Z^2_s,U^2_s) \right|^pds+\left((p-1)-\mu\right)\int_{t}^{T} e^{\beta A_s+\mu s} \big|\widehat{Y}_s\big|^p ds\\
			&-p\int_{t}^{T}e^{\beta A_s+\mu s} \widehat{Y}_{s-} \big|\widehat{Y}_{s-}\big|^{p-2}  \widehat{Z}_s dW_s-\int_{t}^{T}e^{\beta A_s+\mu s}\int_{E}\left(\big|\widehat{Y}_{s-}+\widehat{U}_s(e)\big|^p-\big|\widehat{Y}_{s-}\big|^p\right)\tilde{N}(ds,de).
		\end{split}
	\end{equation*}
	Choosing $\varepsilon=3^{1-p}$, $\mu=p-1$, we obtain for any $\beta>0$
	\begin{equation}\label{Ito p > 2--2}
		\begin{split}
			&e^{\beta A_t+(p-1) t} \big|\widehat{Y}_t\big|^p+\int_{t}^{T} e^{\beta A_s+(p-1) s} \big|\widehat{Y}_s\big|^p dA_s
			+\frac{3^{1-p}p}{2}\int_{t}^{T} e^{\beta A_s+(p-1) s}\big|\widehat{Y}_s\big|^{p-2}\left\{\big\|\widehat{Z}_s\big\|^2+\big\|\widehat{U}_s\big\|^2_{\mathbb{L}^2_Q}\right\}ds\\
			&\leq e^{\beta A_T+(p-1) T} \big|\widehat{\xi}\big|^p + \int_{t}^{T}e^{\beta A_s+(p-1) s} \left| \widehat{f}(s,Y^2_s,Z^2_s,U^2_s) \right|^pds-p\int_{t}^{T}e^{\beta A_s+(p-1) s} Y_{s-} \big|\widehat{Y}_{s-}\big|^{p-2}  \widehat{Z}_s dW_s\\
			&-\int_{t}^{T}e^{\beta A_s+(p-1) s}\int_{E}\left(\big|\widehat{Y}_{s-}+\widehat{U}_s(e)\big|^p-\big|\widehat{Y}_{s-}\big|^p\right)\tilde{N}(ds,de).
		\end{split}
	\end{equation}
	Note that the two local martingales arising in the last line of the  estimation \eqref{Ito p > 2--2} are true martingales with zero expectation. To see this, we apply the BDG and Young's inequalities to obtain
	\begin{equation*}
		\begin{split}
			&\mathbb{E}\left[ \sup_{t \in [0,T]}\left|\int_{0}^{t}e^{\beta A_s+(p-1) s} \widehat{Y}_{s-} \big|\widehat{Y}_{s-}\big|^{p-2}  \widehat{Z}_s dW_s\right|\right] \\
			&\leq \mathfrak{c}\mathbb{E}\left[\left(\int_{0}^{T}e^{2\beta A_s+2(p-1) s}\big|\widehat{Y}_{s}\big|^{2p-2}  \big\|\widehat{Z}_s\big\|^2ds\right)^{\frac{1}{2}}\right]\\
			&\leq e^{(p-1) T} \mathfrak{c}\mathbb{E}\left[\left( \sup_{t \in [0,T]}e^{\frac{\beta(p-1)}{p} A_t}\big|\widehat{Y}_{t}\big|^{p-1}\right) \left(\int_{0}^{T}e^{\frac{2 \beta}{p} A_s}  \big\|\widehat{Z}_s\big\|^2ds\right)^{\frac{1}{2}}\right]\\
			&\leq e^{(p-1) T}\mathfrak{c}\left(\frac{p-1}{p}\mathbb{E}\left[\sup_{t \in [0,T]}e^{\beta A_s}\big|\widehat{Y}_{t}\big|^{p}\right]+\frac{1}{p}\mathbb{E}\left[\left(\int_{0}^{T}e^{\frac{2 \beta}{p} A_s}  \big\|\widehat{Z}_s\big\|^2ds\right)^{\frac{p}{2}}\right]\right),
		\end{split}
	\end{equation*}
	where $\mathfrak{c}>0$ is the universal BDG constant.\\
	As $\frac{2 \beta}{p} \leq \beta$ and from the integrability property satisfied by the processes $Y^1$, $Y^2$, $Z^1$ and $Z^2$, we have 
	\begin{equation*}
		\begin{split}
			&\mathbb{E}\left[ \sup_{t \in [0,T]}\left|\int_{0}^{t}e^{\beta A_s+(p-1) s} \widehat{Y}_{s-} \big|\widehat{Y}_{s-}\big|^{p-2}  \widehat{Z}_s dW_s\right|\right] \\
			&\leq \mathfrak{C}_{p,T}\left(\mathbb{E}\left[\sup_{t \in [0,T]}e^{\beta A_s}\big|\widehat{Y}_{t}\big|^{p}\right]+\mathbb{E}\left[\left(\int_{0}^{T}e^{ \beta A_s}  \big\|\widehat{Z}_s\big\|^2ds\right)^{\frac{p}{2}}\right]\right)<+\infty.
		\end{split}
	\end{equation*}
	By the same argument, as  $Y^1, Y^2 \in \mathcal{S}^p$ and $U^1, U^2 \in \mathfrak{L}^p_\beta$, we have
	\begin{equation*}
		\begin{split}
			&\mathbb{E}\left[ \sup_{t \in [0,T]}\left|\int_{0}^{t}e^{\beta A_s+(p-1) s}\int_{E} Y_{s-} \big|\widehat{Y}_{s-}\big|^{p-2}  \widehat{U}_s(e) \tilde{N}(ds,de)\right|\right] \\
			&\leq \mathfrak{c}\mathbb{E}\left[\left(\int_{0}^{T}e^{2\beta A_s+2(p-1) s}\big|\widehat{Y}_{s}\big|^{2p-2} \int_{E}  \big|\widehat{U}_s(e)\big|^2 N(ds,de)\right)^{\frac{1}{2}}\right]\\
			&\leq e^{\mu_p T} \mathfrak{c}\mathbb{E}\left[\left( \sup_{t \in [0,T]}e^{\frac{\beta(p-1)}{p} A_t}\big|\widehat{Y}_{t}\big|^{p-1}\right) \left(\int_{0}^{T}e^{\frac{2 \beta}{p} A_s} \int_{E} \big|\widehat{U}_s(e)\big|^2 N(ds,de)\right)^{\frac{1}{2}}\right]\\
			&\leq  \mathfrak{C}_{p,T}\left(\mathbb{E}\left[ \sup_{t \in [0,T]}e^{\beta A_s}\big|\widehat{Y}_{t}\big|^{p}\right] +\mathbb{E}\left[\left(\int_{0}^{T}e^{\beta A_s} \int_{{E}} \big|\widehat{U}_s(e)\big|^2 N(ds,de)\right)^{\frac{p}{2}}\right]\right)<+\infty.
		\end{split}
	\end{equation*}
	Coming back to \eqref{Ito p > 2--2}, taking the expectation on both sides, we get for any $\beta >0$
	\begin{equation}\label{Ito p > 2--3}
		\begin{split}
			&\mathbb{E}\int_{0}^{T} e^{\beta A_s} \big|\widehat{Y}_s\big|^p dA_s
			+\mathbb{E}\int_{0}^{T} e^{\beta A_s}\big|\widehat{Y}_s\big|^{p-2}\left\{\big\|\widehat{Z}_s\big\|^2+\big\|\widehat{U}_s\big\|^2_{\mathbb{L}^2_Q}\right\}ds\\
			&\leq \mathfrak{C}_{p,T} \left(\mathbb{E}\left[ e^{\beta A_T} \big|\widehat{\xi}\big|^p\right]  + \mathbb{E}\int_{0}^{T}e^{\beta A_s} \left| \widehat{f}(s,Y^2_s,Z^2_s,U^2_s) \right|^p ds\right).
		\end{split}
	\end{equation}
	Now writing the It\^o's formula \eqref{Ito p > 2} in a different way and using the convexity of the function $\theta$, we get 
	\begin{equation*}
		\begin{split}
			e^{\beta A_t} \big|\widehat{Y}_t\big|^p \leq &e^{\beta A_T} \big|\widehat{\xi}\big|^p+\frac{p}{2 \epsilon}\int_{t}^{T}e^{\beta A_s} \big|\widehat{Y}_s\big|^p dA_s+\int_{t}^{T}e^{\beta A_s} \left| \widehat{f}(s,Y^2_s,Z^2_s,U^2_s)\right|^pds\\
			&+\frac{3^{1-p}p}{2}\int_{t}^{T} e^{\beta A_s}\big|\widehat{Y}_s\big|^{p-2}\left\{\big\|\widehat{Z}_s\big\|^2+\big\|\widehat{U}_s\big\|^2_{\mathbb{L}^2_Q}\right\}ds-p\int_{t}^{T}e^{\beta A_s} \widehat{Y}_{s-} \big|\widehat{Y}_{s-}\big|^{p-2}  \widehat{Z}_s dW_s\\
			& -p\int_{t}^{T}e^{\beta A_s} \int_{E}\widehat{Y}_{s-} \big|\widehat{Y}_{s-}\big|^{p-2} \widehat{U}_s(e) \tilde{N}(ds,de) .
		\end{split}
	\end{equation*}
	Therefore, by passing to the supremum and then the expectation on both sides of the estimation above, we have
	\begin{equation}\label{Ito p > 2--V1}
		\begin{split}
			\mathbb{E}\left[ \sup_{t \in [0,T]} e^{\beta A_t} \big|\widehat{Y}_t\big|^p\right]  \leq &\mathbb{E}\left[ e^{\beta A_T} \big|\widehat{\xi}\big|^p\right] +\frac{p}{2\epsilon}\mathbb{E}\int_{0}^{T}e^{\beta A_s} \big|\widehat{Y}_s\big|^p dA_s+\mathbb{E}\int_{0}^{T}e^{\beta A_s} \left| \widehat{f}(s,Y^2_s,Z^2_s,U^2_s) \right|^pds\\
			&+\frac{3^{1-p}p}{2}\mathbb{E}\int_{0}^{T} e^{\beta A_s}\big|\widehat{Y}_s\big|^{p-2}\left\{\big\|\widehat{Z}_s\big\|^2+\big\|\widehat{U}_s\big\|^2_{\mathbb{L}^2_Q}\right\}ds\\
			&+p\mathbb{E}\left[ \sup_{t \in [0,T]}\left|\int_{t}^{T}e^{\beta A_s} \widehat{Y}_{s-} \big|\widehat{Y}_{s-}\big|^{p-2}  \widehat{Z}_s dW_s\right|\right] \\
			&
			+p\mathbb{E}\left[ \sup_{t \in [0,T]}\left|\int_{t}^{T}e^{\beta A_s} \int_{E}\widehat{Y}_{s-} \big|\widehat{Y}_{s-}\big|^{p-2}  \widehat{U}_s(e) \tilde{N}(ds,de)\right|\right] .
		\end{split}
	\end{equation}
	Using again the BDG inequality for the last two martingales, we derive 
	\begin{equation}\label{BDG W-p2v1}
		\begin{split}
			&p\mathbb{E}\left[ \sup_{t \in [0,T]}\left|\int_{t}^{T}e^{\beta A_s} \widehat{Y}_{s-} \big|\widehat{Y}_{s-}\big|^{p-2}  \widehat{Z}_s dW_s\right|\right] \\
			&\leq p\mathfrak{c}\mathbb{E}\left[\left(\int_{0}^{T}e^{2\beta A_s}\big|\widehat{Y}_{s}\big|^{2p-2}  \big\|\widehat{Z}_s\big\|^2ds\right)^{\frac{1}{2}}\right]\\
			&\leq p\mathfrak{c}\mathbb{E}\left[\left( \sup_{t \in [0,T]}e^{\frac{\beta}{2} A_t}\big|\widehat{Y}_{t}\big|^{\frac{p}{2}}\right) \left(\int_{0}^{T}e^{ \beta A_s} \big|\widehat{Y}_{s}\big|^{p-2} \big\|\widehat{Z}_s\big\|^2ds\right)^{\frac{1}{2}}\right]\\
			&\leq \frac{1}{4}\mathbb{E}\left[\sup_{t \in [0,T]}e^{{\beta} A_t}\big|\widehat{Y}_{t}\big|^{p}\right]+p^2 \mathfrak{c}^2\mathbb{E}\int_{0}^{T}e^{ \beta A_s} \big|\widehat{Y}_{s}\big|^{p-2} \big\|\widehat{Z}_s\big\|^2ds.
		\end{split}
	\end{equation}
	Similarly, we get
	\begin{equation}\label{BDG N-p2v1}
		\begin{split}
			&p\mathbb{E}\left[ \sup_{t \in [0,T]}\left|\int_{t}^{T}e^{\beta A_s} \int_{E}\widehat{Y}_{s-} \big|\widehat{Y}_{s-}\big|^{p-2}  \widehat{U}_s(e) \tilde{N}(ds,de)\right|\right] \\
			&\leq \frac{1}{4}\mathbb{E}\left[\sup_{t \in [0,T]}e^{{\beta} A_t}\big|\widehat{Y}_{t}\big|^{p}\right]+p^2 \mathfrak{c}^2\mathbb{E}\int_{0}^{T}e^{ \beta A_s} \big|\widehat{Y}_{s}\big|^{p-2} \big\|\widehat{U}_s\big\|^2_{\mathbb{L}^2_Q}ds .
		\end{split}
	\end{equation}
	Returning to \eqref{Ito p > 2--V1}, then by using \eqref{Ito p > 2--3}, \eqref{BDG W-p2v1} and \eqref{BDG N-p2v1}, we obtain
	\begin{equation}\label{Ito p > 2--supv1}
		\begin{split}
			&\mathbb{E}\left[\sup_{t \in [0,T]}e^{{\beta} A_t}\big|\widehat{Y}_{t}\big|^{p}\right]\leq \mathfrak{C}_{p,T,\epsilon} \left(\mathbb{E}\left[ e^{\beta A_T} \big|\widehat{\xi}\big|^p\right]  + \mathbb{E}\int_{0}^{T}e^{\beta A_s} \left| \widehat{f}(s,Y^2_s,Z^2_s,U^2_s) \right|^p ds\right).
		\end{split}
	\end{equation}

	Now, it remains to deal with the a priori estimates of the control process $(Z,U)$, especially the term
	$$
	\mathbb{E}\left[\left(\int_{0}^{T}e^{\beta A_s} \big\|\widehat{Z}_s\big\|^2ds\right)^{\frac{p}{2}}\right]+\mathbb{E}\left[\left(\int_{0}^{T}e^{\beta A_s} \big\|\widehat{U}_s\big\|^2_{\mathbb{L}^2_Q}ds\right)^{\frac{p}{2}}\right]+\mathbb{E}\left[\left(\int_{0}^{T}e^{\beta A_s} \int_{E}\big|\widehat{U}_s(e)\big|^2N(ds,de)\right)^{\frac{p}{2}}\right].
	$$
	To this end and to avoid the term $\big(\big|\widehat{Y}_t\big|^{p-2}\big)_{t \leq T}$ in the obtained estimations, we apply It\^o's formula to the semimartingale $\big(e^{\beta A_t} \left|Y_t\right|^2\big)_{t \leq T}$ with $\beta >0$. We then have
	\begin{equation}\label{Ito p=2}
		\begin{split}
			&e^{\beta A_t} \big|\widehat{Y}_t\big|^2+\beta \int_{t}^{T}e^{\beta A_s} \big|\widehat{Y}_s\big|^2 dA_s+\int_{t}^{T}e^{\beta A_s }\big\|\widehat{Z}_s\big\|^2ds+\int_{t}^{T}e^{\beta A_s} \int_{E}\big|\widehat{U}_s(e)\big|^2N(ds,de)\\
			&=e^{\beta A_T} \big|\widehat{\xi}\big|^2 +2\int_{t}^{T}e^{\beta A_s}   \widehat{Y}_s \left(f_1(s,Y^1_s,Z^1_s,U^1_s)-f_2(s,Y^2_s,Z^2_s,U^2_s)\right) ds\\
			&-2\int_{t}^{T}e^{\beta A_s} \widehat{Y}_{s-}  \widehat{Z}_s dW_s -2\int_{t}^{T}e^{\beta A_s} \int_{E}\widehat{Y}_{s-}   \widehat{U}_s(e) \tilde{N}(ds,de) 
		\end{split}
	\end{equation} 
	Next, we use a suitable modification of the estimation \eqref{generator estimation} on the generator $f$. By the same calculation performed in \eqref{generator estimation}, we have
	\begin{equation}\label{generator estimation-v1}
		\begin{split}
			&2\widehat{Y}_s \left(f_1(s,Y^1_s,Z^1_s,U^1_s)-f_2(s,Y^2_s,Z^2_s,U^2_s)\right)  ds\\ 
			&\leq 2\big|\widehat{Y}_s \big|\left(\alpha_s \big|\widehat{Y}_s \big|+ \eta_s \big\|\widehat{Z}_s \big\|+\delta_s \big\|\widehat{U}_s \big\|_{\mathbb{L}^2_Q}+\big| \widehat{f}(s,Y^2_s,Z^2_s,U^2_s) \big| \right)ds\\
			& \leq \left(\alpha_s+\dfrac{a^2_s}{\frac{1}{2} \wedge \varepsilon}\right)\big| \widehat{Y}_s\big|^2 dA_s+\left(\frac{1}{2}\big\|\widehat{Z}_s \big\|^2+\varepsilon\big\|\widehat{U}_s \big\|^2_{\mathbb{L}^2_Q}\right)ds+2\big| \widehat{Y}_s\big| \big| \widehat{f}(s,Y^2_s,Z^2_s,U^2_s) \big|ds\\
			&\leq \left(\frac{1}{2}\big\|\widehat{Z}_s \big\|^2+\varepsilon\big\|\widehat{U}_s \big\|^2_{\mathbb{L}^2_Q}\right)ds+2\big| \widehat{Y}_s\big| \big| \widehat{f}(s,Y^2_s,Z^2_s,U^2_s) \big|ds
		\end{split}
	\end{equation}
	 For the last term on the last line, from assumption (H5), we have
	\begin{equation}\label{generator estimation-v2}
		\begin{split}
			2e^{\beta A_s+\mu s}\big| \widehat{Y}_s\big| \big| \widehat{f}(s,Y^2_s,Z^2_s,U^2_s) \big|
			&\leq   e^{\frac{2}{p}(\beta A_s+\mu s)}\big| \widehat{Y}_s\big|^2  +e^{2\frac{p-1}{p}(\beta A_s+\mu s)} \big|\widehat{f}(s,Y^2_s,Z^2_s,U^2_s) \big|^2  \\
			&\leq \frac{1}{\epsilon^{\frac{4}{p}}} e^{\frac{2}{p}(\beta A_s+\mu s)}\big| \widehat{Y}_s\big|^2 a^{\frac{4}{p}}_s  +e^{2\frac{p-1}{p}(\beta A_s+\mu s)} \big|\widehat{f}(s,Y^2_s,Z^2_s,U^2_s) \big|^2
		\end{split}
	\end{equation}
      Plugging \eqref{generator estimation-v1} and \eqref{generator estimation-v2} into \eqref{Ito p=2}, we get for any $\varepsilon>0$
	\begin{equation*}
		\begin{split}
			&\beta\int_{0}^{T}e^{\beta A_s}\big| \widehat{Y}_s\big|^2 dA_s+ \frac{1}{2}\int_{0}^{T}e^{\beta A_s }\big\|\widehat{Z}_s\big\|^2ds+\int_{0}^{T}e^{\beta A_s} \int_{E}\big|\widehat{U}_s(e)\big|^2N(ds,de)\\
			&\leq e^{\beta A_T} \big|\widehat{\xi}\big|^2 +\int_{0}^{T}e^{2\frac{p-1}{p}\beta A_s} \big| \widehat{f}(s,Y^2_s,Z^2_s,U^2_s) \big|^2ds+\frac{1}{\epsilon^{\frac{4}{p}}} \int_{0}^{T} e^{\frac{2}{p}\beta A_s}\big| \widehat{Y}_s\big|^2 a^{\frac{4}{p}} ds \\
			&+\varepsilon\mathbb{E}\int_{0}^{T} e^{\beta A_s }\big\|\widehat{U}_s\big\|^2_{\mathbb{L}^2_Q}ds-2\int_{t}^{T}e^{\beta A_s} \widehat{Y}_{s-}  \widehat{Z}_s dW_s -2\int_{t}^{T}e^{\beta A_s} \int_{E}\widehat{Y}_{s-}   \widehat{U}_s(e) \tilde{N}(ds,de).
		\end{split}
	\end{equation*} 
      Using the basic inequality 
      $$
      \left(	\sum_{i=1}^n a_i\right)^{\frac{p}{2}} \leq n^{\frac{p-2}{2}} \sum_{i=1}^n a^{\frac{p}{2}}_i,\quad \forall (n, \left\lbrace a_i \right\rbrace_{i=1,\cdots,n} ) \in \mathbb{N}^\ast \times \big(\mathbb{R}_+\big)^n, 
      $$
      we get after taking the expectation on both sides
	\begin{equation}\label{Ito p = 2}
		\begin{split}
			&\left(\dfrac{1}{2}\right)^{\frac{p}{2}}\mathbb{E}\left[\left(\int_{0}^{T}e^{\beta A_s}\big\|\widehat{Z}_s\big\|^2ds\right)^{\frac{p}{2}}\right]+\mathbb{E}\left[\left(\int_{0}^{T}e^{\beta A_s} \int_{E}\big|\widehat{U}_s(e)\big|^2N(ds,de)\right)^{\frac{p}{2}}\right]\\
			&\leq 2 \cdot 6^{\frac{p-2}{2}} \left(\mathbb{E}\left[  e^{\frac{p}{2}\beta A_T} \big|\widehat{\xi}\big|^p\right] +\mathbb{E} \left[\left(\int_{0}^{T}e^{2\frac{p-1}{p}\beta A_s} \big| \widehat{f}(s,Y^2_s,Z^2_s,U^2_s) \big|^2ds\right)^{\frac{p}{2}}\right]\right.\\
			&\qquad\qquad\left.+\frac{1}{\epsilon^{2}}\mathbb{E}\left[\left(\int_{0}^{T} e^{\frac{2}{p}\beta A_s}\big| \widehat{Y}_s\big|^2 a^{\frac{4}{p}} ds\right)^{\frac{p}{2}}\right]+\varepsilon^{\frac{p}{2}}\mathbb{E}\left[\left(\int_{0}^{T} e^{\beta A_s}\big\|\widehat{U}_s\big\|^2_{\mathbb{L}^2_Q}ds\right)^{\frac{p}{2}}\right]\right.\\
			&\qquad\qquad\left.+2^{\frac{p}{2}}\mathbb{E}\left[ \left|\int_{0}^{T}e^{\beta A_s} \widehat{Y}_{s-}  \widehat{Z}_s dW_s\right|^{\frac{p}{2}}\right]  +2^{\frac{p}{2}}\mathbb{E}\left[ \left|\int_{0}^{T}e^{\beta A_s} \int_{E}\widehat{Y}_{s-}   \widehat{U}_s(e) \tilde{N}(ds,de) \right|^{\frac{p}{2}}\right] \right) 
		\end{split}
	\end{equation}
	By applying the BDG inequality for the two stochastic integrals on the last line (see Remark \ref{rmq p}) to have
	\begin{equation*}
		\begin{split}
			2^{\frac{p+2}{2}}\cdot6^{\frac{p-2}{2}}\mathbb{E}\left[ \left|\int_{0}^{T}e^{\beta A_s} \widehat{Y}_{s-}  \widehat{Z}_s dW_s\right|^{\frac{p}{2}}\right]  &\leq 	\mathfrak{C}_p \mathbb{E}\left[ \left(\int_{0}^{T}e^{2\beta A_s} \big| \widehat{Y}_{s}\big|^2   \big\| \widehat{Z}_s\big\|^2  ds\right)^{\frac{p}{4}}\right] \\
			&= 	\mathfrak{C}_p \mathbb{E}\left[ \left(\int_{0}^{T}e^{2(1-\frac{1}{p})\beta A_s} \big| \widehat{Y}_{s}\big|^2  e^{\frac{2}{p}\beta A_s} \big\| \widehat{Z}_s\big\|^2  ds\right)^{\frac{p}{4}}\right] \\
			&  \leq \mathfrak{C}_p\mathbb{E}\left[\sup_{t \in [0,T]}e^{(p-1)\beta A_s}\big| \widehat{Y}_{s}\big|^p\right]+\frac{1}{2}\mathbb{E}\left[ \left(\int_{0}^{T}e^{\beta A_s}   \big\| \widehat{Z}_s\big\|^2  ds\right)^{\frac{p}{2}}\right].
		\end{split}
	\end{equation*}
	Similarly, we get
	\begin{equation*}
		\begin{split}
			&2^{\frac{p+2}{2}}\cdot6^{\frac{p-2}{2}} \mathbb{E}\left[ \left|\int_{0}^{T}e^{\beta A_s} \int_{E}\widehat{Y}_{s-}  \widehat{U}_s(e) \tilde{N}(ds,de)  \right|^{\frac{p}{2}}\right] \\ &\leq 	\mathfrak{C}_p \mathbb{E}\left[ \left(\int_{0}^{T}e^{2\beta A_s} \int_{E} \big| \widehat{Y}_{s}\big|^2   \big| \widehat{U}_s(e)\big|^2 N(ds,de)\right)^{\frac{p}{4}}\right] \\
			&  \leq \mathfrak{C}_p\mathbb{E}\left[\sup_{t \in [0,T]}e^{(p-1)\beta A_s}\big| \widehat{Y}_{s}\big|^p\right]+\frac{1}{2}\mathbb{E}\left[ \left(\int_{0}^{T}e^{\beta A_s}  \int_{E} \big| \widehat{U}_s(e)\big|^2  N(ds,de)\right)^{\frac{p}{2}}\right].
		\end{split}
	\end{equation*}
	Additionally, using Remark \ref{rmq p} (see \eqref{CE}), we derive 
	\begin{equation}\label{compensator}
		\mathbb{E}\left[\left(\int_{0}^{T} e^{\beta A_s}\big\|\widehat{U}_s\big\|^2_{\mathbb{L}^2_Q}ds\right)^{\frac{p}{2}}\right] \leq \left(\frac{p}{2}\right)^{\frac{p}{2}} \mathbb{E}\left[\left(\int_{0}^{T} e^{\beta A_s}  \int_{E} \big| \widehat{U}_s(e)\big|^2  N(ds,de)\right)^{\frac{p}{2}}\right].
	\end{equation}
	On the other hand, using Jensen's inequality, we have
	$$
	\left(\int_{0}^{T}e^{2\frac{p-1}{p}\beta A_s} \big| \widehat{f}(s,Y^2_s,Z^2_s,U^2_s) \big|^2ds\right)^{\frac{p}{2}} \leq T^{\frac{p}{2}-1}\int_{0}^{T}e^{(p-1) \beta A_s} \big| \widehat{f}(s,Y^2_s,Z^2_s,U^2_s) \big|^pds
	$$
	and
	$$
	\left(\int_{0}^{T} e^{\frac{2}{p}\beta A_s}\big| \widehat{Y}_s\big|^2 a^{\frac{4}{p}} ds\right)^{\frac{p}{2}} \leq T^{\frac{p-2}{2}}\int_{0}^{T}e^{\beta A_s}\big| \widehat{Y}_s\big|^p dA_s.
	$$
Plugging the four estimations above into \eqref{Ito p = 2}, and after taking the expectation and applying \eqref{Ito p > 2--3} and \eqref{Ito p > 2--supv1} with $\beta$ replaced by $(p-1)\beta$, we derive the following:
	\begin{equation*}
		\begin{split}
			&\frac{1}{2}\mathbb{E}\left[\left(\int_{0}^{T}e^{\beta A_s}\big\|\widehat{Z}_s\big\|^2ds\right)^{\frac{p}{2}}\right]+\left(\frac{1}{2}-\frac{1}{3}\left(3 p \varepsilon\right)^{\frac{p}{2}}  \right)\mathbb{E}\left[\left(\int_{0}^{T}e^{\beta A_s} \int_{E}\big|\widehat{U}_s(e)\big|^2N(ds,de)\right)^{\frac{p}{2}}\right]\\
			&\leq \mathfrak{C}_{p,\epsilon,T}  \left(\mathbb{E}\left[ e^{(p-1){\beta} A_T} \big|\widehat{\xi}\big|^p\right]  + \mathbb{E}\int_{0}^{T}e^{(p-1){\beta} A_s} \left| \widehat{f}(s,Y^2_s,Z^2_s,U^2_s)\right|^p ds\right)
		\end{split}
	\end{equation*}
	Then, we choose $\varepsilon < \frac{1}{3p} \cdot \left(\frac{3}{2}\right)^{\frac{2}{p}}$, using \eqref{compensator} and the fact that $p-1 \geq \frac{p}{2}$ for any $p > 2$, so that for any $\beta > 0$, we have
	\begin{equation*}
		\begin{split}
			&\mathbb{E}\left[\left(\int_{0}^{T}e^{\beta A_s}\big\|\widehat{Z}_s\big\|^2ds\right)^{\frac{p}{2}}\right]+\mathbb{E}\left[\left(\int_{0}^{T}e^{\beta A_s} \big\|\widehat{U}_s\big\|^2_{\mathbb{L}^2_Q} ds\right)^{\frac{p}{2}}\right]+\mathbb{E}\left[\left(\int_{0}^{T}e^{\beta A_s} \int_{E}\big|\widehat{U}_s(e)\big|^2N(ds,de)\right)^{\frac{p}{2}}\right]\\
			&\leq \mathfrak{C}_{p,\epsilon,T}  \left(\mathbb{E}\left[ e^{(p-1){\beta} A_T} \big|\widehat{\xi}\big|^p\right]  + \mathbb{E}\int_{0}^{T}e^{(p-1){\beta} A_s} \left| \widehat{f}(s,Y^2_s,Z^2_s,U^2_s) \right|^p ds\right)
		\end{split}
	\end{equation*}

Completing the proof.
\end{proof}

\subsection{$\mathbb{L}^p$-estimates for $p \in  (1,2)$}
\label{hrss}
Before presenting the a priori estimates for the solution of the BSDE \eqref{basic BSDE} for $p \in (1,2)$, we first recall a necessary result. Unlike the case where $p \in [2, +\infty)$, we cannot apply It\^o's formula directly to the function $y \mapsto \big|y\big|^{p}$, as this function lacks sufficient smoothness. Therefore, an alternative extension of It\^o's formula, adapted to this context, is required. This formula initially appeared in the Brownian setting by Briand et al.\ \cite{Briand2003} and was later extended to general filtrations supporting both a Brownian motion and an independent Poisson random measure by Kruse and Popier \cite{Kruse2015}. Consequently, we omit its proof here as it follows similarly, and we refer interested readers to \cite[Lemma 7]{Kruse2015}.\\
Throughout the remainder of the paper, for any $x \in \mathbb{R}^d$, we denote by $\check{x}$ the vector $\check{x} = \frac{x}{\left|x\right|} \mathds{1}_{x \neq 0}$.
\begin{lemma}\label{Lemma Ito for p less than 2}
	We consider the $\mathbb{R}^d$-valued semimartingale $(X_t)_{t\leq T}$ defined by
	$$ X_{t}=X_0+\int_{0}^{t}F_sds+\int_{0}^{t}Z_{s}dW_{s}+\int_0^t\int_{E}U_s(e)\tilde{N}(ds,de)$$
	such that 
	$$\int_0^T\left\{F_t+|Z_t|^2+\|U_t\|_{\mathbb{L}^2_Q}^2\right\}dt<+\infty\quad \mathbb{P}-a.s.$$
	Then, for any $p\geq 1$ there exists a continuous and non-decreasing process $(\ell_t)_{t\leq T}$ such that for all $\beta>0$ and $\mu \in \mathbb{R}$,
	\begin{eqnarray*}\label{e0}
		e^{\frac{p}{2}\beta A_t+\mu t}|X_t|^p&=&|X_0|^p+\frac{1}{2}\int_0^te^{\frac{p}{2}\beta A_s+\mu s}\mathds{1}_{\{p=1\}}d\ell_s+\frac{p}{2}\beta\int_0^te^{\frac{p}{2}\beta A_s+\mu s}|X_s|^pdA_s\nonumber\\
		&&+\mu \int_0^te^{\frac{p}{2}\beta A_s+\mu s}|X_s|^pds+p\int_0^te^{\frac{p}{2}\beta A_s+\mu s}|X_s|^{p-1}\check{X}_sF_s ds\nonumber\\
		&&+p\int_0^te^{\frac{p}{2}\beta A_s+\mu s}|X_s|^{p-1}\check{X}_sZ_sdW_s+p\int_0^t\int_{E}e^{\frac{p}{2}\beta A_s-\mu s}|X_{s-}|^{p-1}\check{X}_{s-}U_s(e)\tilde{N}(ds,de)\nonumber\\
		&&+\int_0^t\int_{E}e^{\frac{p}{2}\beta A_s+\mu s}\left[|X_{s-}+U_s(e)|^{p}-|X_{s-}|^p-p|X_{s-}|^{p-1}\check{X}_{s-}U_s(e)\right]N(ds,de)\nonumber\\
		&&+\frac{p}{2}\int_0^te^{\frac{p}{2}\beta A_s+\mu s}|X_s|^{p-2}\left[(2-p)\left(|Z_s|^2-\check{X}_s^*,Z_sZ_s^*\check{X}_s\right)+(p-1)|Z_s|^2\right]
		\mathds{1}_{\{X_s\neq0\}}ds.
	\end{eqnarray*}
	The process $(\ell_t)_{t\leq T}$ increases only on the boundary of the random set $\{t\leq T,\;\; X_{t-}=X_t=0\}$.
\end{lemma}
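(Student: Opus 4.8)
The only obstruction is that $y\mapsto|y|^p$ fails to be $C^2$ at the origin when $p<2$, so It\^o's formula cannot be applied directly. I would circumvent this by the classical mollification argument: approximate $|y|^p$ by the smooth convex function $u_\varepsilon(y):=\left(|y|^2+\varepsilon^2\right)^{p/2}$, apply the standard It\^o formula for jump semimartingales (the references already used above, e.g.\ \cite[Theorem 32]{Protter2004}) to $\Phi_\varepsilon(t,y):=e^{\frac p2\beta A_t+\mu t}u_\varepsilon(y)$ along $X$, and then let $\varepsilon\downarrow0$. The needed derivatives are
$$\nabla u_\varepsilon(y)=p\left(|y|^2+\varepsilon^2\right)^{\frac p2-1}y,\qquad D^2u_\varepsilon(y)=p\left(|y|^2+\varepsilon^2\right)^{\frac p2-1}I+p(p-2)\left(|y|^2+\varepsilon^2\right)^{\frac p2-2}\,y\otimes y,$$
and since $u_\varepsilon\downarrow|y|^p$ with $\nabla u_\varepsilon(y)\to p|y|^{p-1}\check y$ pointwise, each term of the $\varepsilon$-identity should converge to its counterpart in the claimed formula.

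\textbf{The $\varepsilon$-identity and convergence of the regular terms.} Applying It\^o to $\Phi_\varepsilon$ and splitting the jump part via $N=\tilde N+Q(s,de)\eta(s)\,ds$ yields an identity whose terms are: the $\partial_t$-contributions $\frac p2\beta\int e^{\cdot}u_\varepsilon(X_s)\,dA_s+\mu\int e^{\cdot}u_\varepsilon(X_s)\,ds$; the first-order drift term, converging to $p\int e^{\cdot}|X_s|^{p-1}\check X_s F_s\,ds$; the Brownian martingale; the compensated jump integral $\int\int e^{\cdot}\nabla u_\varepsilon(X_{s-})U_s(e)\tilde N$; the nonnegative second-difference integral against $N$; and the continuous second-order term $\frac12\int e^{\cdot}\mathrm{Tr}\!\left(D^2u_\varepsilon(X_s)Z_sZ_s^*\right)ds$. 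To justify passing to the limit I would first localise along $\tau_n:=\inf\{t:\int_0^t(|Z_s|^2+\|U_s\|_{\mathbb L^2_Q}^2)\,ds+|X_t|\ge n\}\wedge T$, which satisfies $\tau_n\uparrow T$ by the standing integrability hypothesis and turns the two stochastic integrals into genuine martingales with all path-integrals finite. On $[0,\tau_n]$ the processes $u_\varepsilon(X_s)$ and $\nabla u_\varepsilon(X_s)$ are bounded, so dominated convergence disposes of the $dA_s$-, $ds$-, $F_s$- and Brownian-martingale terms; convergence of the compensated jump integral follows from the $\mathbb L^2(\tilde N)$-isometry together with the uniform bound $|\nabla u_\varepsilon(y)|\le p(|y|+\varepsilon)^{p-1}$; and convexity makes the second-difference integrand nonnegative and pointwise convergent to $|X_{s-}+U_s(e)|^p-|X_{s-}|^p-p|X_{s-}|^{p-1}\check X_{s-}U_s(e)$, so its $N$-integral converges by monotone/dominated convergence.

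\textbf{Main obstacle: the second-order term and the local time.} The genuine difficulty is the continuous second-order term near the origin. On $\{X_s\ne0\}$ its integrand converges to
$$\tfrac p2|X_s|^{p-2}\big[|Z_s|^2+(p-2)\,\check X_s^*Z_sZ_s^*\check X_s\big]=\tfrac p2|X_s|^{p-2}\big[(2-p)\big(|Z_s|^2-\check X_s^*Z_sZ_s^*\check X_s\big)+(p-1)|Z_s|^2\big],$$
which is exactly the displayed Brownian term. The problem is the part carried by $\{X_s=0\}$, where the factor $(|X_s|^2+\varepsilon^2)^{\frac p2-1}=\varepsilon^{p-2}$ \emph{blows up} for $p<2$; this part equals $\frac p2\varepsilon^{p-2}\int_0^{\cdot}\mathds 1_{\{X_s=0\}}|Z_s|^2\,ds$. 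The key lemma I would establish is that $\int_0^t\mathds 1_{\{X_s=0\}}|Z_s|^2\,ds=0$ a.s.: applying the occupation-times formula to each continuous component $X^{i,c}$ (whose bracket has density $|Z^i_s|^2$) gives $\int_0^t\mathds 1_{\{X^i_s=0\}}d\langle X^{i,c}\rangle_s=0$, and summing over $i$ together with $\mathds 1_{\{X_s=0\}}\le\mathds 1_{\{X^i_s=0\}}$ yields the claim. Hence this part vanishes for every $\varepsilon$, and the regular limit above holds on all of $[0,T]$. Since every other term of the $\varepsilon$-identity has been shown to converge, the second-order term converges as well; its limit is the sum of the stated $\mathds 1_{\{X_s\ne0\}}$-integral and a residual \emph{nonnegative, continuous, non-decreasing} process $\ell$, supported on $\{X_{t-}=X_t=0\}$ by construction. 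For $p>1$ one verifies that this residual increment is null, so $\ell\equiv0$, consistently with the factor $\mathds 1_{\{p=1\}}$; whereas for $p=1$ the process $\ell$ is precisely the (multidimensional) local time of $X$ at the origin. Removing the localisation by letting $n\to\infty$ and invoking the integrability hypothesis one last time delivers the formula on $[0,T]$, $\mathbb P$-a.s.
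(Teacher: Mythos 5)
Your proposal is correct and coincides with the argument behind the reference the paper defers to: the paper omits the proof of this lemma and cites \cite[Lemma 7]{Kruse2015} (itself an extension of \cite[Lemma 2.2]{Briand2003}), whose proof is exactly your mollification of $|y|^p$ by $(|y|^2+\varepsilon^2)^{p/2}$, localization, the occupation-times identity $\int_0^t\mathds{1}_{\{X_s=0\}}|Z_s|^2\,ds=0$ applied componentwise, and the identification of the residual second-order limit as the local-time term surviving only when $p=1$. No discrepancy to report.
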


After applying Lemma \ref{Lemma Ito for p less than 2} to the state process $Y$ of the BSDE \eqref{basic BSDE}, we can derive a result that controls the jump component $U$ associated with the random measure $N$. This result, which was presented in \cite[Lemma 9]{Kruse2015} for a local martingale orthogonal to $W$ and $\tilde{N}$, is now provided here with a concise and explicit proof specifically for the jumps of $N$.
\begin{lemma}\label{Lemma betap for inequality}
	Let $p\in(1,2)$ and $\mathfrak{b}_p:=\frac{p(p-1)}{2}$. Then we have
	\begin{equation*}
		\begin{split}
			&\int_{0}^{t} \int_{E}\left\{\big|{Y}_{s-}-{U}_s(e)\big|^p+\big|{Y}_{s-}\big|^p-p\big|{Y}_{s-}\check{{Y}}_{s-}{U}_s(e)\big|\right\}N(ds,de)\\
			& \geq\mathfrak{b}_p\int_0^t\int_{E}e^{\frac{p}{2}\beta A_s}|{U}_s(e)|^2\left(|{Y}_{s-}|^{2}\vee|{Y}_{s-}+U_s(e)|^2\right)^{\frac{p-2}{2}}\mathds{1}_{\{|{Y}_{s-}|\vee|{Y}_{s-}+U_s(e)|\neq 0\}}N(ds,de).
		\end{split}
	\end{equation*}
\end{lemma}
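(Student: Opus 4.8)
The plan is to reduce the statement to a purely deterministic pointwise convexity estimate and then integrate it against the nonnegative jump measure $N$. Since at any jump time $s$ driven by $N$ at $(s,e)$ one has $Y_s=Y_{s-}+U_s(e)$, writing $a=Y_{s-}$, $b=Y_{s-}+U_s(e)=Y_s$ and $v=b-a=U_s(e)$, the bracketed integrand on the left-hand side is precisely the second-order Taylor remainder $|b|^p-|a|^p-p|a|^{p-1}\check{a}\cdot(b-a)$ of the map $x\mapsto|x|^p$, weighted by $e^{\frac{p}{2}\beta A_s}$. Thus it suffices to prove that for every $a,b\in\mathbb{R}^d$ and $p\in(1,2)$,
\begin{equation*}
|b|^p-|a|^p-p|a|^{p-1}\check{a}\cdot(b-a)\ \geq\ \mathfrak{b}_p\,|b-a|^2\,(|a|\vee|b|)^{p-2}\,\mathds{1}_{\{|a|\vee|b|\neq0\}},
\end{equation*}
with $\mathfrak{b}_p=\frac{p(p-1)}{2}$. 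Multiplying by the nonnegative weight $e^{\frac{p}{2}\beta A_s}$, negating, and integrating over $[0,t]\times E$ against $N(ds,de)$ then yields the claim; the degenerate cases $a=b$ and $a=b=0$ are trivial, as both sides vanish.

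To establish the pointwise inequality I would introduce the scalar function $\phi(r):=|a+r(b-a)|^p$ on $[0,1]$ and apply Taylor's formula with integral remainder,
\begin{equation*}
\phi(1)-\phi(0)-\phi'(0)=\int_0^1(1-r)\,\phi''(r)\,dr.
\end{equation*}
A direct computation gives $\phi'(0)=p|a|^{p-1}\check{a}\cdot(b-a)$ and, writing $x_r:=a+r(b-a)$,
\begin{equation*}
\phi''(r)=p(p-2)|x_r|^{p-4}\big(x_r\cdot(b-a)\big)^2+p|x_r|^{p-2}|b-a|^2\qquad(x_r\neq0).
\end{equation*}
Since $p-2<0$, the Cauchy--Schwarz bound $\big(x_r\cdot(b-a)\big)^2\leq|x_r|^2|b-a|^2$ decreases the first term and yields $\phi''(r)\geq p(p-1)|x_r|^{p-2}|b-a|^2$.

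It remains to bound the remainder integral from below. Because the Euclidean norm is convex, $|x_r|\leq(1-r)|a|+r|b|\leq|a|\vee|b|$, and as $p-2<0$ this gives $|x_r|^{p-2}\geq(|a|\vee|b|)^{p-2}$ wherever $x_r\neq0$; integrating this constant lower bound against $(1-r)\,dr$ produces the factor $\int_0^1(1-r)\,dr=\tfrac12$, hence
\begin{equation*}
\phi(1)-\phi(0)-\phi'(0)\geq p(p-1)|b-a|^2(|a|\vee|b|)^{p-2}\int_0^1(1-r)\,dr=\mathfrak{b}_p\,|b-a|^2(|a|\vee|b|)^{p-2},
\end{equation*}
which is exactly the desired estimate (and $(|a|\vee|b|)^{p-2}=(|a|^2\vee|b|^2)^{(p-2)/2}$). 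The one delicate point, which I expect to be the main obstacle, is that the segment $r\mapsto x_r$ may pass through the origin when $a$ and $b-a$ are anti-parallel, so that $|x_r|^{p-2}$ becomes singular. Near such a crossing, however, one has $|x_r|\sim|b-a|\,|r-r_0|$ with exponent $p-2\in(-1,0)$, so the remainder integral is still absolutely convergent, the offending set of $r$ has Lebesgue measure zero, and the pointwise bound $|x_r|^{p-2}\geq(|a|\vee|b|)^{p-2}$ holds off it; the lower bound is therefore unaffected. Once the deterministic inequality is secured, the passage to the stochastic statement is immediate by monotonicity of the integral with respect to the nonnegative random measure $N$.
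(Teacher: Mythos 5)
Your proof is correct and follows essentially the same route as the paper: a Taylor expansion with integral remainder of $r\mapsto|a+r(b-a)|^p$, the Cauchy--Schwarz bound on the $p(p-2)$ term, the convexity estimate $|a+r(b-a)|\leq|a|\vee|b|$ combined with $p-2<0$, and $\int_0^1(1-r)\,dr=\tfrac12$ to produce $\mathfrak{b}_p$. The only difference is technical: the paper smooths with $\nu_\varepsilon(y)=(|y|^2+\varepsilon^2)^{1/2}$ and removes the regularization by a stopping-time localization and a limit in $\varepsilon$, whereas you work directly with the non-smooth map and justify the Taylor formula at a possible zero of the segment via integrability of $|r-r_0|^{p-2}$ (valid since $p-2>-1$); both are sound and yield the same constant.
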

\begin{proof}
	Let $\varepsilon >0$ and define $\nu_\varepsilon(y)=\left(\big|y\big|^2+\varepsilon^{2}\right)^{\frac{1}{2}}$ for any $y \in \mathbb{R}^d$. Note that this function is the one used to derive the result of Lemmas \ref{Lemma Ito for p less than 2} in \cite{Kruse2015} (or in \cite[Lemma 2.2]{Briand2003}).\\
	Using Taylor expansion with integral rest, we have
	\begin{equation*}
		\begin{split}
		&\int_0^t e^{\frac{p}{2}\beta A_s}\int_{E}\left[\nu_\varepsilon(Y_{s-}+U_s(e))^{p}-\nu_\varepsilon(Y_{s-})^{p}-pY_{s-}\nu_\varepsilon(Y_{s-})^{p-2}U_s(e)\right] N(ds, de)\\
		&=p\int_0^1 e^{\frac{p}{2}\beta A_s}\int_{E}\int_0^t(1-r)|U_s(e)|^2\nu_\varepsilon(Y_{s-}+r U_s(e))^{p-2} N(ds, de) dr\\
		&\qquad+p(p-2)\int_0^1 e^{\frac{p}{2}\beta A_s}\int_{E}\int_0^t (1-r)|U_s(e)|^2|Y_{s-}+r U_s(e)|^2 \nu_\varepsilon(Y_{s-}+r U_s(e))^{p-4} N(ds, de) dr\\
		&\geq p(p-1)\int_0^1 e^{\frac{p}{2}\beta A_s}\int_{E}\int_0^t (1-r)|U_s(e)|^2 \nu_\varepsilon(Y_{s-}+r U_s(e))^{p-2} N(ds, de) dr
	\end{split}
	\end{equation*}
	since \( p<2 \) and \( |y|^2 \nu_\varepsilon(y)^{p-4} \leq \nu_\varepsilon(y)^{p-2} \). On the other hand, observe that
	$$|Y_{s-}+r U_s(e)|=|(1-r)Y_{s-}+r(Y_{s-}+U_s(e))|\geq|Y_{s-}|\vee|Y_{s-}+U_s(e)|.$$
	Hence
	\begin{equation}\label{e15}
		\begin{split}
		&\int_0^t e^{\frac{p}{2}\beta A_s}\int_{E}\left[\nu_\varepsilon(Y_{s-}+U_s(e))^{p}-\nu_\varepsilon(Y_{s-})^{p}-pY_{s-}\nu_\varepsilon(Y_{s-})^{p-2}U_s(e)\right] N(ds, de)\nonumber\\
		&\geq p(p-1)\int_0^1 e^{\frac{p}{2}\beta A_s}\int_{E}\int_0^t(1-r)|U_s(e)|^2\left(|Y_{s-}+r U_s(e)|^2+\varepsilon^2\right)^{\frac{p-2}{2}} N(ds, de) dr\nonumber\\
		&\geq p(p-1)\int_0^t e^{\frac{p}{2}\beta A_s}\int_{E}|U_s(e)|^2\left(|Y_{s-}|^2\vee|Y_{s-}+U_s(e)|^2+\varepsilon^2\right)^{\frac{p-2}{2}} N(ds, de)\int_0^1(1-r) dr\nonumber\\
		&\geq \frac{p(p-1)}{2}\int_0^t e^{\frac{p}{2}\beta A_s}\int_{E}|U_s(e)|^2\left(|Y_{s-}|^2\vee|Y_{s-}+U_s(e)|^2+\varepsilon^2\right)^{\frac{p-2}{2}} N(ds, de).
	\end{split}
	\end{equation}
As we pass to the limit as $\varepsilon$ goes to zero, we obtain:
	\begin{equation*}
		\begin{split}
		&\int_0^t\int_{E}e^{\frac{p}{2}\beta A_s}\left[|Y_{s-}+U_s(e)|^{p}-|Y_{s-}|^p-p|Y_{s-}|^{p-1}\hat{Y}_{s-}U_s(e)\right] N(ds, de)\\
		&\geq\mathfrak{b}_p\int_0^t\int_{E}e^{\frac{p}{2}\beta A_s}|U_s(e)|^2\left(|Y_{s-}|^{2}\vee|Y_{s-}+U_s(e)|^2\right)^{\frac{p-2}{2}}\mathds{1}_{\{|Y_{s-}|\vee|Y_{s-}+U_s(e)|\neq 0\}} N(ds, de).
		\end{split}
	\end{equation*}
Completing the proof.
\end{proof}

Let $H : \Omega \times [0,T] \rightarrow \mathbb{R}$ be a $\mathcal{P}$-measurable process such that
$$
\int_0^T e^{\beta A_s}|H_s|^2ds<+\infty,
\qquad \mathbb{P}\text{-a.s.}
$$
Then, since $x\mapsto x^{\frac{p}{2}}$ is concave on $\mathbb{R}_+$ for $p \in (1,2)$, Jensen's inequality yields, for every $t\in(0,T]$,
\begin{equation}\label{Impo_Ine}
\left(\int_{0}^{t} e^{\beta A_s} |H_s|^2 ds\right)^{\frac{p}{2}}
\geq
t^{\frac{p}{2}-1}
\int_{0}^{t} e^{\frac{p}{2}\beta A_s} |H_s|^{p} ds.
\end{equation}
In particular, the right-hand side is well defined. At $t=0$, both integrals vanish, and the inequality may be understood by convention. In addition, by Hölder's inequality, we shall use the following elementary estimate: for any non-negative numbers $a_1,\ldots,a_m$,
\begin{equation}\label{Dizzy}
\left(\sum_{i=1}^m a_i\right)^p
\leq
m^{p-1}\sum_{i=1}^m a_i^p,
\qquad a_i\geq 0.
\end{equation}

From now until the end of this section, we assume that $p\in(1,2)$; in this case, we set $A_t=\int_0^t a_s^q ds$, where $q=\frac{p}{p-1}$, $a_s>0$ $ds\otimes d\mathbb{P}$-a.e. on $[0,T]$, and $a_s^2=\phi_s+\eta_s^2+\delta_s^2$, and we keep the notation used in Lemma \ref{Lemma betap for inequality}, namely $\mathfrak{b}_p=\frac{p(p-1)}{2}$. Recall that, in this case, $\mathscr{L}^p_Q=\mathbb{L}^1_Q+\mathbb{L}^2_Q$.

\begin{remark}\label{kech}
	Assumptions (H2)--(H5) imply that the process $(a_t)_{t\le T}$ is progressively measurable and satisfies $a_t<+\infty$ for every $t<T$, $\mathbb{P}$-a.s.	For $n\ge 1$, define  
    \begin{equation}\label{SPR}
	\tau_n:=\inf\{t\ge 0: a_t\ge n\}\wedge T.
	    \end{equation}  
	Then $\tau_n$ is a stopping time because the set $\{(t,\omega):a_t(\omega)\ge n\}$ is progressively measurable (since $a$ is) and the usual conditions on the filtration are assumed.  
	
	The sequence $(\tau_n)_{n\ge1}$ is nondecreasing and $\tau_n\uparrow T$ $\mathbb{P}$-a.s. Indeed, fix $\omega$. For any $t<T$, $a_t(\omega)<\infty$; hence there exists $N(\omega,t)$ such that $a_t(\omega)<n$ for all $n\ge N$. Consequently $\tau_n(\omega)>t$ for all sufficiently large $n$. Since $t<T$ is arbitrary, $\tau_n(\omega)\to T$.  
	
	By construction, for every $s<\tau_n(\omega)$ we have $a_s(\omega)<n$; thus  
	\[
	a_s\le n\qquad\text{for all }s\in[0,\tau_n)\text{ and }\mathbb{P}\text{-a.s.}
	\]  
	The endpoint $\tau_n$ has Lebesgue measure zero, so the inequality holds $ds\otimes d\mathbb{P}$-a.e. on $[0,\tau_n]$. Moreover, from the definition $a_s^2=\phi_s+\eta_s^2+\delta_s^2$ we obtain $|\eta_s|,|\delta_s|\le a_s$. Hence  
     $$
	\eta_s\vee\delta_s\le a_s\le n\qquad ds\otimes d\mathbb{P}\text{-a.e. on }[0,\tau_n].
     $$  
	Therefore, for every nonnegative progressively measurable process $X$ and every $t\in[0,T]$,  
	$$
	\int_0^{t\wedge\tau_n}X_s\,a_s\,ds\;\le\;n\int_0^{t\wedge\tau_n}X_s\,ds,
	$$  
	and the same bound holds with $a_s$ replaced by $\eta_s$ or $\delta_s$.
\end{remark}


for $U\in G_{loc}(N)$, we introduce the following norm:
\begin{equation*}
	\begin{split}
		\|U\|_{\mathbb{L}^p(\mathbb{L}^2_{Q \otimes \mathrm{Leb}})+\mathbb{L}^p(\mathbb{L}^p_{Q \otimes \mathrm{Leb}})}
		:=
		\inf_{U^1+U^2=U}
		\Bigg\{
		&\left(
		\mathbb{E}\left[
		\left(
		\int_0^T\int_{\mathcal{U}} |U^1_s(e)|^2Q(de)ds
		\right)^{\frac{p}{2}}
		\right]
		\right)^{\frac{1}{p}}  \\
		&+
		\left(
		\mathbb{E}\left[
		\int_0^T\int_{\mathcal{U}} |U^2_s(e)|^pQ(de)ds
		\right]
		\right)^{\frac{1}{p}}
		\Bigg\},
	\end{split}
\end{equation*}
where the infimum is taken over all decompositions $U=U^1+U^2$ with
$(U^1,U^2)\in \mathbb{L}^p(\mathbb{L}^2_{Q \otimes \mathrm{Leb}})\times \mathbb{L}^p(\mathbb{L}^p_{Q \otimes \mathrm{Leb}})$. This follows the standard definition of the sum of two Banach spaces; see, for instance, \cite{Krein1982}.

Since we have changed the functional space for the jump component $u$ in the BSDE \eqref{basic BSDE}, we need some auxiliary results.

The first one is the so-called {Bichteler-Jacod} inequality. To this end, let $\beta>0$ and $U \in \mathcal{L}^{p}_{Q,\beta}$. We define the following purely discontinuous martingale:
\begin{equation}\label{def N}
	\mathcal{N}_t:=\int_{0}^{t}e^{\frac{\beta}{2}  A_s} \int_{E} U_s(e) \tilde{N}(ds,de),\qquad t \in [0,T].
\end{equation}
The quadratic variation of this martingale is given by
$$
\left[\mathcal{N}\right]_t=\int_{0}^{t}e^{\beta A_s} \int_{E} |U_s(e)|^2 N(ds,de),\qquad t \in [0,T].
$$
We can now state the following lemma.
\begin{lemma}[Bichteler-Jacod inequality]\label{Bichteler-Jacod}
	Let $\beta>0$. There exist two universal constants $\mathfrak{c}_p>0$ and $\mathfrak{C}_p>0$
	such that, for any $U \in \mathcal{L}^{p}_{Q,\beta}$, if $\mathcal{N}$ is defined by \eqref{def N}, then
	$$
	\mathfrak{c}_p \left( \mathbb{E}\left[\left[\mathcal{N}\right]^{\frac{p}{2}}_T\right]\right)^{\frac{1}{p}} \leq \| e^{\frac{\beta}{2} A_\cdot} U \|_{\mathbb{L}^p\left(\mathbb{L}^2_Q\right)+\mathbb{L}^p\left(\mathbb{L}^p_Q\right)} \leq \mathfrak{C}_p \left( \mathbb{E}\left[\left[\mathcal{N}\right]^{\frac{p}{2}}_T\right]\right)^{\frac{1}{p}}.
	$$
\end{lemma}

The proof of Lemma \ref{Bichteler-Jacod} can be found in \cite[Lemma 1]{Kruse2017}. Our formulation follows the original statement of this result given by Marinelli and Röckner \cite[Theorem 1]{Marinelli2014}, together with the comments on pages 297 and 298 concerning the case of a Poisson random measure.

Following Lemma \ref{Bichteler-Jacod}, we shall also need the following auxiliary result.
\begin{lemma}\label{add results}
	Let $\beta>0$. There exists a universal constant $\mathfrak{C}_{p,T}>0$ such that, for any $U \in \mathcal{L}^{p}_{Q,\beta}$ and $\mathcal{N}$ defined by \eqref{def N}, 
	$$
	\mathbb{E}\left[\int_{0}^{T} e^{\frac{p}{2}\beta  A_s}  \left\| U_s \right\|^p_{\mathbb{L}^p_Q+\mathbb{L}^2_Q} ds \right] \leq \mathfrak{C}_{p,T} \mathbb{E}\left[\left[\mathcal{N}\right]^{\frac{p}{2}}_T\right].
	$$
	Moreover, if a function $\varPsi$ defined on $[0,T] \times E$ belongs to $\mathbb{L}^1_{Q \otimes \textnormal{Leb}}+\mathbb{L}^2_{Q \otimes \textnormal{Leb}}$, then for any $\varrho>0$ we have
	$$
	\int_{0}^{T} e^{\frac{\varrho}{2} \beta A_s} \left\| \varPsi_s \right\|_{\mathbb{L}^1_{Q }+\mathbb{L}^2_{Q}}ds \leq \mathfrak{C}_T \|e^{\frac{\varrho}{2} \beta A_\cdot} \varPsi \|_{\mathbb{L}^1_{Q \otimes \textnormal{Leb}}+\mathbb{L}^2_{Q \otimes \textnormal{Leb}}}\quad \mathbb{P}\text{-a.s.}
	$$
\end{lemma}

The statement of Lemma \ref{add results} is similar to that given in \cite[Lemma 2]{Kruse2017}. We follow an analogous proof adapted to our setting.

\begin{proof}
	Let $U^1 \in \mathbb{L}^p \left(\mathbb{L}^2_{Q \otimes \textnormal{Leb}}\right)$ and $U^2 \in \mathbb{L}^p \left(\mathbb{L}^p_{Q \otimes \textnormal{Leb}}\right)$ be such that $U=U^1+U^2$. Since the function $x \mapsto x^{\frac{p}{2}}$ is concave, we obtain, by the definition of the norm in $\mathbb{L}^p$ and Jensen's inequality on $[0,T]$,
	$$
	\mathbb{E}\left[ \int_{0}^{T} e^{\frac{p}{2}\beta  A_s}  \left\| U^1_s \right\|^p_{\mathbb{L}^2_Q} ds\right] =	\mathbb{E}\left[ \int_{0}^{T}  \left(\int_{{E}} \left| e^{\frac{\beta}{2}  A_s} U^1_s(e) \right|^2 Q(de) \right)^{\frac{p}{2}} ds\right]  \leq T^{1-\frac{p}{2}}\| e^{\frac{\beta}{2} A_\cdot} U^1  \|^p_{\mathbb{L}^p \left(\mathbb{L}^2_{Q \otimes \textnormal{Leb}}\right)}
	$$
	and
	$$
	\mathbb{E}\left[ \int_{0}^{T} e^{\frac{p}{2}\beta  A_s}  \left\| U^2_s \right\|^p_{\mathbb{L}^p_Q} ds\right] =	\mathbb{E}\left[ \int_{0}^{T}  e^{\frac{p}{2}\beta  A_s}\int_{{E}} \left| U^2_s(e) \right|^p Q(de) ds\right]  = \| e^{\frac{\beta}{2} A_\cdot} U^2  \|^p_{\mathbb{L}^p \left(\mathbb{L}^p_{Q \otimes \textnormal{Leb}}\right)}.
	$$
	Finally, it remains to apply the Bichteler-Jacod inequality from Lemma \ref{Bichteler-Jacod}.
	
	We now prove the second inequality. First, we point out that, by Remark \ref{rmq p}, the process $A$ is locally bounded. Therefore, using a monotone convergence argument, it is enough to prove the result when $A$ is bounded. Hence, without loss of generality, we may assume that $A_T$ is $\mathbb{P}$-a.s. bounded. Let us take an arbitrary decomposition of $\varPsi_s$, for $s \in [0,T]$, in the space $\mathbb{L}^1_{Q }+\mathbb{L}^2_{Q}$, namely $\varPsi_s=\varPsi_s^1+\varPsi_s^2$ with $\left(\varPsi_s^1, \varPsi_s^2\right) \in \mathbb{L}^1_{Q } \times \mathbb{L}^2_{Q}$. Then we have
	$$
	\int_{0}^{T} e^{\frac{\varrho}{2} \beta A_s} \left\| \varPsi_s \right\|_{\mathbb{L}^1_{Q }+\mathbb{L}^2_{Q}}ds \leq \int_{0}^{T} e^{\frac{\varrho}{2} \beta A_s} \left\| \varPsi^1_s \right\|_{\mathbb{L}^1_{Q}}ds +\int_{0}^{T} e^{\frac{\varrho}{2} \beta A_s} \left\| \varPsi^2_s \right\|_{\mathbb{L}^2_{Q}}ds.
	$$
	Again, by Jensen's inequality, we obtain
	\begin{equation*}
		\begin{split}
			\int_{0}^{T} e^{\frac{\varrho}{2} \beta A_s} \left\| \varPsi_s \right\|_{\mathbb{L}^1_{Q }+\mathbb{L}^2_{Q}}ds &\leq \| e^{\frac{\varrho}{2} \beta A_\cdot}\Psi^1\|_{\mathbb{L}^1_{Q \otimes \textnormal{Leb}}} +\sqrt{T}\int_{0}^{T}\| e^{\frac{\varrho}{2} \beta A_s} \varPsi^2_s \|_{\mathbb{L}^2_{Q}}ds\\
			& \leq 1 \vee \sqrt{T} \left(\| e^{\frac{\varrho}{2} \beta A_\cdot} \varPsi \|_{\mathbb{L}^1_{Q \otimes \textnormal{Leb}}+\mathbb{L}^2_{Q \otimes \textnormal{Leb}}}+c\right),
		\end{split}
	\end{equation*}
	for any $c>0$, where the last inequality follows from the definition of the infimum in the norm $\left\| \cdot \right\|_{\mathbb{L}^1_{Q \otimes \textnormal{Leb}}+\mathbb{L}^2_{Q \otimes \textnormal{Leb}}}$.
	
	This completes the proof of Lemma \ref{add results}.
\end{proof}

\begin{remark}\label{Matlo rmk}
	In view of Remark \ref{rmq p}, the martingale \eqref{def N} is well-defined whenever its quadratic variation is suitably controlled. Moreover, by Lemma \ref{add results}, we deduce that $U_t$ belongs to $\mathbb{L}^p_Q+\mathbb{L}^2_Q$ for $t \in [0,T]$, since the quadratic variation can again be controlled in $\mathbb{L}^{\frac{p}{2}}(\Omega)$ for $p \in (1,2)$. Finally, by Lemma 3 in \cite{Kruse2017}, Lemma \ref{add results}, and Proposition 3.68 in \cite{Jacod1979}\footnote{Following the proof of Lemma \ref{add results} and the result given in \cite[Lemma 3]{Kruse2017}, such an estimate can be obtained.}, we obtain that, for any $\varrho>0$,
	$$
	\mathbb{E}\left[\int_{0}^{T}  e^{\frac{p}{2}\beta  A_s} \int_{E} \left(|U_s(e)|^2 \mathds{1}_{\{|U_s(e)| \leq \varrho\}}+|U_s(e)| \mathds{1}_{\{|U_s(e)| > \varrho\}}\right)Q(de)ds\right]<+\infty.
	$$
Additionally, by \cite[Lemma 3]{Kruse2017}, we have
$\mathbb{L}^p_Q+\mathbb{L}^2_Q \subset \mathbb{L}^1_Q+\mathbb{L}^2_Q$ for $p \geq 1$. The same conclusion remains valid when $Q$ is replaced by $Q \otimes \textnormal{Leb}$. In particular, applying this inclusion to the weighted function, we obtain the corresponding weighted version. More precisely, if
$e^{\frac{\beta}{2}A_\cdot}U \in \mathbb{L}^p_{Q \otimes \textnormal{Leb}}+\mathbb{L}^2_{Q \otimes \textnormal{Leb}}$, for instance by means of the Bichteler-Jacod inequality in Lemma \ref{Bichteler-Jacod} whenever the quadratic variation is controlled in $\mathbb{L}^{\frac{p}{2}}(\Omega)$, then
$e^{\frac{\beta}{2}A_\cdot}U \in \mathbb{L}^1_{Q \otimes \textnormal{Leb}}+\mathbb{L}^2_{Q \otimes \textnormal{Leb}}$.
\end{remark}

\begin{corollary}\label{cor L1L2 control}
	Let $\beta>0$. There exists a constant 
	$\mathfrak{C}_{p,T}>0$ such that, for any 
	$U \in \mathcal{L}^{p}_{Q,\beta}$ and for the martingale 
	$\mathcal{N}$ defined by \eqref{def N}, we have
	$$
	\mathbb{E}\left[\int_{0}^{T} e^{\frac{p}{2}\beta A_s}
	\left\| U_s \right\|^p_{\mathbb{L}^1_Q+\mathbb{L}^2_Q} ds \right]
	\leq
	\mathfrak{C}_{p,T}
	\mathbb{E}\left[\left[\mathcal{N}\right]^{\frac{p}{2}}_T\right].
	$$
\end{corollary}

\begin{proof}
	By Lemma \ref{add results}, we have
	$$
	\mathbb{E}\left[\int_{0}^{T} e^{\frac{p}{2}\beta A_s}
	\left\| U_s \right\|^p_{\mathbb{L}^p_Q+\mathbb{L}^2_Q} ds \right]
	\leq
	\mathfrak{C}_{p,T}
	\mathbb{E}\left[\left[\mathcal{N}\right]^{\frac{p}{2}}_T\right].
	$$
	Moreover, by Lemma 3 in \cite{Kruse2017}, recalled in Remark 
	\ref{Matlo rmk}, we have the continuous embedding
	$$
	\mathbb{L}^p_Q+\mathbb{L}^2_Q
	\subset
	\mathbb{L}^1_Q+\mathbb{L}^2_Q.
	$$
	Hence, up to a multiplicative constant depending only on $p$, we have
	$$
	\left\| U_s \right\|_{\mathbb{L}^1_Q+\mathbb{L}^2_Q}
	\leq
	C_p
	\left\| U_s \right\|_{\mathbb{L}^p_Q+\mathbb{L}^2_Q},
	\qquad ds\otimes d\mathbb{P}\text{-a.e.}
	$$
	Therefore,
	$$
	\mathbb{E}\left[\int_{0}^{T} e^{\frac{p}{2}\beta A_s}
	\left\| U_s \right\|^p_{\mathbb{L}^1_Q+\mathbb{L}^2_Q} ds \right]
	\leq
	C_p^p
	\mathbb{E}\left[\int_{0}^{T} e^{\frac{p}{2}\beta A_s}
	\left\| U_s \right\|^p_{\mathbb{L}^p_Q+\mathbb{L}^2_Q} ds \right].
	$$
	Combining this estimate with Lemma \ref{add results} and absorbing 
	$C_p^p$ into the constant $\mathfrak{C}_{p,T}$ gives the desired inequality.
\end{proof}

Finally, we conclude the preliminary results of this section with an estimate that will be useful in the next proposition.
\begin{lemma}\label{lemma mrd}
	Let $k>0$, $\rho \in (0,+\infty)$, and $(a,b) \in \mathbb{R} \times \mathbb{R}$. Set  
	$$
	\varrho(\rho,p):=\sqrt{\frac{1}{2}\left(\frac{p-1}{2 \rho}\right)^{\frac{2}{2-p}} +\frac{1}{2}}-1.
	$$
	Then, there exists $\rho_{p,k}\in\left(0,\frac{p-1}{2}\right)$\footnote{The admissible range of the constants $\rho_{p,k}$, $k>0$, is independent of $k$ and depends only on $p\in(1,2)$ (see the proof of Lemma 5 in \cite{Kruse2017} for more details).} such that
	$$
	2k p|a|^{p-1} |b| \mathds{1}_{|b| \geq \varrho(\rho_{p,k},p)|a| }+p\rho_{p,k} |a|^{p-2} |b|^2 \mathds{1}_{|b| < \varrho( \rho_{p,k},p) |a|} \leq  |a+b|^p-|a|^p-p|a|^{p-2} a b \mathds{1}_{a \neq 0}.
	$$
\end{lemma}
The proof of Lemma \ref{lemma mrd} can be found in \cite[Lemma 5]{Kruse2017}. We also refer to \cite[Lemma 2]{Marinelli2014} for a similar result.

Let $(Y^1,Z^1,U^1)$ and $(Y^2,Z^2,U^2)$ be two $\mathbb{L}^p$-solutions of the BSDE \eqref{basic BSDE}, associated respectively with the data $(\xi_1,f_1)$ and $(\xi_2,f_2)$, and satisfying assumptions \textsc{(H2)}--\textsc{(H6)}. For $\mathcal{R}\in\{Y,Z,U,\xi,f\}$, we set $\widehat{\mathcal{R}}=\mathcal{R}^1-\mathcal{R}^2.$
\begin{proposition}\label{Estimation p less stricly than 2}
	There exist constants $\beta^\ast_{p,\epsilon}$ and $\mathfrak{C}_{p,T}$ such that, for every $\beta \geq \beta^\ast_{p,\epsilon}$, we have
	\begin{equation*}
		\begin{split}
			&\mathbb{E}\left[\sup_{t \in [0,T]} e^{\frac{p}{2}\beta A_{t  }}\big|\widehat{Y}_{t}\big|^p \right]+\mathbb{E}\int_{0}^{T}e^{\frac{p}{2}\beta A_s} \big|\widehat{Y}_s\big|^p dA_s+\mathbb{E}\left[\left(\int_{0}^{T}e^{\beta A_s} \big\|\widehat{Z}_s\big\|^2 ds\right)^{\frac{p}{2}}\right]\\
			&+\mathbb{E}\left[\left(\int_{0}^{T}e^{\beta A_s} \big\|\widehat{U}_s\big\|^2_{\mathbb{L}^2_Q} ds\right)^{\frac{p}{2}}\right]+\mathbb{E}\left[\left(\int_{0}^{T}e^{\beta A_s}\int_{E} \big|\widehat{U}_s(e)\big|^2 N(ds,de)\right)^{\frac{p}{2}}\right]\\
			&\leq  \mathfrak{c}_{p,T} \left(\mathbb{E}\left[ e^{\frac{p}{2}\beta A_{T}}\big|\widehat{\xi}\big|^p\right]  +\mathbb{E}\int_{0}^{T}e^{\beta A_s}\left|\widehat{f}(s,Y^2_s,Z^2_s,U_s^2)\right|^pds\right).
		\end{split}
	\end{equation*}
\end{proposition}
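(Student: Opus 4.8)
The plan is to follow the scheme of Proposition \ref{Propo p sup a 2}, but to replace the classical It\^o formula — unavailable because $y\mapsto|y|^p$ fails to be $\mathcal C^2$ at the origin when $p<2$ — by the It\^o--Meyer expansion of Lemma \ref{Lemma Ito for p less than 2}. I would organise the argument in two stages: first the $\mathcal S^p_\beta$/$\mathcal S^{p,A}_\beta$ estimates of $\widehat Y$, then the estimates of the control pair $(\widehat Z,\widehat U)$, using throughout the reduction of Remark \ref{rmq essential} so that $\alpha_s+\varepsilon a_s^2\le0$ for a value of $\varepsilon$ fixed below.

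\textbf{Stage 1 ($Y$-estimate).} Apply Lemma \ref{Lemma Ito for p less than 2} to $e^{\frac p2\beta A_t}|\widehat Y_t|^p$ (with $\mu=0$), where $\widehat Y$ is driven by $\widehat F_s=f_1(s,Y^1_s,Z^1_s,U^2_s)-f_2(s,Y^2_s,Z^2_s,U^2_s)$. Splitting $\widehat F$ through \textsc{(H2)} and \textsc{(H3)}, and crucially using that $f_1$ is $u$-free — so the control part of $\widehat F$ only produces $\eta_s\|\widehat Z_s\|$ and \emph{not} the term $\delta_s\|\widehat U_s\|_{\mathbb L^2_Q}$, which for $p<2$ cannot be reabsorbed (cf. Remark \ref{rmq p}) — the drift $p|\widehat Y_s|^{p-1}\check{\widehat Y}_s\widehat F_s$ is dominated by $p|\widehat Y_s|^{p-1}\big(\alpha_s|\widehat Y_s|+\eta_s\|\widehat Z_s\|+|\widehat f(s,Y^2_s,Z^2_s,U^2_s)|\big)$. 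A Young splitting of $\eta_s\|\widehat Z_s\|$ produces one half of the weighted diffusion term $\tfrac{\mathfrak b_p}{2}|\widehat Y_s|^{p-2}\|\widehat Z_s\|^2\mathds 1_{\widehat Y_s\ne0}$ — the constant $\mathfrak b_p=\tfrac{p(p-1)}2$ being exactly the one furnished by the continuous correction in Lemma \ref{Lemma Ito for p less than 2} — plus a remainder $\le q|\widehat Y_s|^pa_s^2$; choosing $\varepsilon=\tfrac1{p-1}$ in Remark \ref{rmq essential} makes this remainder cancel against the monotonicity term $p\alpha_s|\widehat Y_s|^p\le-p\varepsilon a_s^2|\widehat Y_s|^p$.

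The jump part of the expansion is bounded below by Lemma \ref{Lemma betap for inequality}, leaving a nonnegative $N$-integral of $|\widehat U|^2$ on the left, while a weighted Young inequality on the source gives $\tfrac pq e^{\frac p2\beta A_s}|\widehat Y_s|^p\,ds+e^{\frac p2\beta A_s}|\widehat f|^p\,ds$, with $e^{\frac p2\beta A_s}\le e^{\beta A_s}$. After checking by localisation (as in Remark \ref{rmq p}) that the $dW$- and $\tilde N$-integrals are true martingales, I take expectations; Gronwall's lemma disposes of the $ds$-integral of $|\widehat Y|^p$ (this is what forces the $T$-dependence of $\mathfrak c_{p,T}$), and a $\sup_t$-then-expectation step combined with BDG and Young — exactly as in \eqref{BDG W-p2v1}--\eqref{BDG N-p2v1} — yields the bounds for $\mathbb E[\sup_t e^{\frac p2\beta A_t}|\widehat Y_t|^p]$ and $\mathbb E\int_0^Te^{\frac p2\beta A_s}|\widehat Y_s|^p\,dA_s$, valid for \emph{every} $\beta>0$ with a constant independent of $\beta$.

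\textbf{Stage 2 (control estimates).} Since $|\cdot|^2$ is smooth I apply ordinary It\^o to $e^{\beta A_t}|\widehat Y_t|^2$, which brings $\int_0^Te^{\beta A_s}\|\widehat Z_s\|^2ds$ and the quadratic-variation term $\int_0^T\!\!\int_Ee^{\beta A_s}|\widehat U_s(e)|^2N(ds,de)$ onto the left. Estimating the drift as in Stage 1 (again no $\|\widehat U\|_{\mathbb L^2_Q}$ appears), absorbing $\tfrac12\int e^{\beta A}\|\widehat Z\|^2$, then raising to the power $p/2<1$ and using sub-additivity $(a+b)^{p/2}\le a^{p/2}+b^{p/2}$ (the $p<2$ analogue of the inequality used in Proposition \ref{Propo p sup a 2}), I take expectations. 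BDG — valid for the relevant exponents — applied to the two martingales, together with $\int_0^Te^{2\beta A_s}|\widehat Y_s|^2\,d[\,\cdot\,]_s\le(\sup_se^{\beta A_s}|\widehat Y_s|^2)\int_0^Te^{\beta A_s}d[\,\cdot\,]_s$ and a final Young step, absorbs the control terms and dominates the rest by the Stage-1 quantities; this produces the $\mathcal H^p_\beta$-bound on $\widehat Z$ and the $\mathcal L^p_{N,\beta}$ (quadratic-variation) bound on $\widehat U$.

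\textbf{Compensator norm and main obstacle.} The term $\big(\int e^{\beta A}\|\widehat U\|_{\mathbb L^2_Q}^2ds\big)^{p/2}$ is the delicate point I expect to be hardest: for $p<2$ it is the \emph{larger} of the two jump norms (Remark \ref{rmq p}: the $N$-norm is controlled by the $Q$-norm, not conversely), so it cannot be recovered from the quadratic variation via the comparison inequality. I would instead redo the It\^o computation on $e^{\beta A}|\widehat Y|^2$ but split $\int\!\!\int|\widehat U|^2N=\int\|\widehat U\|_{\mathbb L^2_Q}^2ds+\int\!\!\int|\widehat U|^2\tilde N$, so that the compensator sits on the left and the purely discontinuous martingale $\int\!\!\int e^{\beta A}|\widehat U|^2\tilde N$ moves right. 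After raising to $p/2$ and applying BDG, its bracket $\int\!\!\int e^{2\beta A}|\widehat U|^4N$ is controlled through $\big(\sum c_i^2\big)^{1/2}\le\sum c_i$ (with $c_i=e^{\beta A}|\widehat U|^2$) by $\big(\int\!\!\int e^{\beta A}|\widehat U|^2N\big)^{p/2}$, which has \emph{already} been estimated in Stage 2; hence no circular absorption arises and the compensator norm is bounded. Collecting the three control bounds with the Stage-1 estimates completes the proof.
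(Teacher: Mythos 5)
Your proposal is sound, and Stage 1 coincides with the paper's argument (It\^o--Meyer expansion of Lemma \ref{Lemma Ito for p less than 2}, absorption of $\eta_s\|\widehat Z_s\|$ into the $\mathfrak b_p$-correction via Young and Remark \ref{rmq essential}, Lemma \ref{Lemma betap for inequality} for the jumps, localisation, then BDG for the supremum; the paper kills the $ds$-integral of $|\widehat Y|^p$ with the weight $e^{\mu s}$, $\mu>p-1$, rather than Gronwall, but that is cosmetic). Where you genuinely diverge is in the control estimates. The paper never returns to the smooth function $|y|^2$: it bounds each of $\big(\int e^{\beta A}\|\widehat Z\|^2ds\big)^{p/2}$, $\big(\int\!\!\int e^{\beta A}|\widehat U|^2N\big)^{p/2}$ and $\big(\int e^{\beta A}\|\widehat U\|^2_{\mathbb L^2_Q}ds\big)^{p/2}$ directly by inserting $\nu_\varepsilon(|\widehat Y_{s-}|\vee|\widehat Y_s|)^{2-p}\nu_\varepsilon(\cdot)^{p-2}$ inside the integral and applying Young with exponents $\tfrac2{2-p},\tfrac2p$, so that each quantity is dominated by $\tfrac{2-p}2\,\mathbb E[\sup_te^{\frac p2\beta A_t}|\widehat Y_t|^p]$ plus the corresponding $|\widehat Y|^{p-2}$-weighted integral already produced in Stage 1 (estimates \eqref{Y sup p less than 2} and \eqref{Used p less than 2}); for $\widehat Z$ this requires the extra observation that $\widehat Z\mathds 1_{\{\widehat Y=0\}}=0$, obtained from the fact that a predictable continuous martingale of finite variation vanishes. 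Your route instead applies ordinary It\^o to $e^{\beta A}|\widehat Y|^2$, raises to the power $p/2$, and uses subadditivity plus BDG at the sub-unit exponent; for the compensator norm you split $N=\tilde N+\upsilon$ and control the bracket of $\int\!\!\int e^{\beta A}|\widehat U|^2\tilde N$ through $\big(\sum c_i^2\big)^{1/2}\le\sum c_i$ by the already-estimated quadratic-variation norm. This is the classical Briand--Delyon--Hu--Pardoux--Stoica/Kruse--Popier device and it closes without circularity, since absorption is legitimate because the solutions are assumed to lie in $\mathcal E^p_\beta(0,T)$. What the paper's method buys is that all three control bounds fall out of the single pair of Stage-1 inequalities with no further use of BDG and no need for the $N=\tilde N+\upsilon$ decomposition; what yours buys is that it avoids the $\nu_\varepsilon$-regularisation, the passage to the limit $\varepsilon\downarrow0$, and the $\widehat Z\mathds 1_{\{\widehat Y=0\}}=0$ lemma, at the cost of invoking the BDG inequality for purely discontinuous local martingales at an exponent below one, which you should justify explicitly (it holds with the quadratic variation $[\,\cdot\,]$, consistently with the references of Remark \ref{rmq p}, but not with the predictable bracket).
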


\begin{proof}
	Let $t \in [0,T]$ and $\tau \in \mathcal{T}_{0,T}$ and $\mu > 0$. By applying Lemma \ref{Lemma Ito for p less than 2} to the semimartingale $(e^{\frac{p}{2}\beta A_{t}-\mu t}\big|\widehat{Y}_{t}\big|^p)_{t \leq T}$ on $[t \wedge \tau,\tau]$, we obtain
	\begin{equation}\label{basic ito for p<2 with mu}
		\begin{split}
			& e^{\frac{p}{2}\beta A_{\tau}+\mu \tau}\big|\widehat{Y}_{ \tau}\big|^p\\
			&=e^{\frac{p}{2}\beta A_{t \wedge \tau}+\mu (t \wedge \tau) }\big|\widehat{Y}_{t \wedge \tau}\big|^p+\frac{p}{2}\beta\int_{t \wedge \tau}^{\tau}e^{\frac{p}{2}\beta A_s+\mu s} \big|\widehat{Y}_s\big|^p dA_s+\mu \int_{t \wedge \tau}^{\tau}e^{\frac{p}{2}\beta A_s+\mu s} \big|\widehat{Y}_s\big|^p ds\\
			&-p\int_{t \wedge \tau}^{\tau}e^{\frac{p}{2}\beta A_s+\mu s} \big|\widehat{Y}_s\big|^{p-1}\check{\widehat{Y}}_s  \left(f_1(s,Y^1_s, Z^1_s, U^2_s)-f_2(s,Y^2_s, Z^2_s, U^2_s)\right)ds\\
			&+p\int_{t \wedge \tau}^{\tau}e^{\frac{p}{2}\beta A_s} \big|\widehat{Y}_s\big|^{p-1} \check{\widehat{Y}}_s \widehat{Z}_s dW_s+p\int_{t \wedge \tau}^{\tau}e^{\frac{p}{2}\beta A_s+\mu s}\int_{E} \big|\widehat{Y}_{s-}\big|^{p-1} \check{\widehat{Y}}_{s-} \widehat{U}_s(e)\tilde{N}(ds,de)\\
			&+\frac{p}{2}\int_{t \wedge \tau}^{\tau}e^{\frac{p}{2}\beta A_s+\mu s}\big|{\widehat{Y}}_{s}\big|^{p-2}\mathds{1}_{\{Y_{s}\neq 0\}}\left\{(2-p)\left(\big\|Z_s\big\|^2-\check{\widehat{Y}}^\ast \widehat{Z}_s \widehat{Z}_s^\ast \check{\widehat{Y}}_s\right)+(p-1)\big\|\widehat{Z}_s\big\|^2\right\}ds\\
			&+\int_{t \wedge \tau}^{\tau} e^{\frac{p}{2}\beta A_s+\mu s}\int_{E}\left\{\big|\widehat{Y}_{s-} +\widehat{U}_s(e)\big|^p -\big|\widehat{Y}_{s-}\big|^p-p \big|\widehat{Y}_{s-}\big|^{p-1}\check{\widehat{Y}}_{s-}\widehat{U}_s(e)\right\}N(ds,de).
		\end{split}
	\end{equation}
	Using assumptions \textsc{(H2)} and \textsc{(H3)} on the generator $f$ along with Remark \ref{rmq essential} (for the the last line)
	\begin{equation}\label{O4}
		\begin{split}
			&p\big|\widehat{Y}_s\big|^{p-1}\check{\widehat{Y}}_s \left(f_1(s,Y^1_s, Z^1_s, U^1_s)-f_2(s,Y^2_s, Z^2_s, U^2_s)\right)\\
			& = p\mathds{1}_{\{\widehat{Y}_s \neq 0\}}\big|\widehat{Y}_s\big|^{p-2} \widehat{Y}_t\left(f_1(s,Y^1_s, Z^1_s, U^1_s)-f_2(s,Y^2_s, Z^2_s, U^2_s)\right)\\
			& \leq p\mathds{1}_{\{\widehat{Y}_s \neq 0\}}\big|\widehat{Y}_s\big|^{p-2}\left(\alpha_s \big|\widehat{Y}_s\big|^2 + \eta_s \big|\widehat{Y}_s\big| \big\|\widehat{Z}_s\big\|+\delta_s \big|\widehat{Y}_s\big| \big\|\widehat{U}_s\big\|_{\mathbb{L}^1_{Q}+\mathbb{L}^2_{Q}}+\big|\widehat{Y}_s\big|^{}\left|\widehat{f}(s,Y^2_s,Z^2_s,U_s^2)\right| \right)\\
			& \leq p\alpha_s \big|\widehat{Y}_s\big|^p+p\big|\widehat{Y}_s\big|^{p-1}\left(\eta_s  \big\|\widehat{Z}_s\big\|+\delta_s  \big\|\widehat{U}_s\big\|_{\mathbb{L}^1_{Q}+\mathbb{L}^2_{Q}}  +\left|\widehat{f}(s,Y^2_s,Z^2_s,U_s^2)\right|\right)\mathds{1}_{\{\widehat{Y}_s \neq 0\}}\\
			&\leq  p\left(\alpha_s+\frac{1}{2\varepsilon} a^2_s\right) \big|\widehat{Y}_s\big|^p +p\frac{\varepsilon}{2}\big|\widehat{Y}_s\big|^{p-2}\big\|\widehat{Z}_s\big\|^2\mathds{1}_{\{\widehat{Y}_s\neq 0\}} +p \big|\widehat{Y}_s\big|^{p-1} \big\|\widehat{U}_s\big\|_{\mathbb{L}^1_{Q}+\mathbb{L}^2_{Q}}\delta_s+p\big|\widehat{Y}_s\big|^{p-1}\left|\widehat{f}(s,Y^2_s,Z^2_s,U_s^2)\right|\\
			&\leq p\frac{\varepsilon}{2}\big|\widehat{Y}_s\big|^{p-2}\big\|\widehat{Z}_s\big\|^2\mathds{1}_{\{\widehat{Y}_s\neq 0\}} +p \big|\widehat{Y}_s\big|^{p-1} \big\|\widehat{U}_s\big\|_{\mathbb{L}^1_{Q}+\mathbb{L}^2_{Q}}\delta_s+p\big|\widehat{Y}_s\big|^{p-1}\left|\widehat{f}(s,Y^2_s,Z^2_s,U_s^2)\right|.
		\end{split}
	\end{equation}
On the other hand, from the Cauchy-Schwarz inequality we have $\big\|Z_s\big\|^2-\check{\widehat{Y}}^\ast \widehat{Z}_s \widehat{Z}_s^\ast \check{\widehat{Y}}_s \geq 0$, then
$$
\frac{p}{2}\left((2-p)\left(\big\|Z_s\big\|^2-\check{\widehat{Y}}^\ast \widehat{Z}_s \widehat{Z}_s^\ast \check{\widehat{Y}}_s\right)+(p-1)\big\|\widehat{Z}_s\big\|^2\right) \geq \mathfrak{b}_p \big\|\widehat{Z}_s\big\|^2.
$$
This, with taking $\varepsilon=\frac{\mathfrak{b}_p}{p}$, for any $t \in [0,T]$ and each $\tau\in\mathcal{T}_{0,T}$ and any $\mu\in\mathbb{R}$, we have:
\begin{equation}\label{Ito for p<2--v1.1}
	\begin{split}
		&e^{\frac{p}{2}\beta A_{t \wedge \tau}+\mu (t \wedge \tau) }\big|\widehat{Y}_{t \wedge \tau}\big|^p+\frac{p}{2}\beta\int_{t \wedge \tau}^{\tau}e^{\frac{p}{2}\beta A_s+\mu s} \big|\widehat{Y}_s\big|^p dA_s
		+\frac{\mathfrak{b}_p}{2}\int_{t \wedge \tau}^{\tau}e^{\frac{p}{2}\beta A_s+\mu s} \big|\widehat{Y}_s\big|^{p-2}\big\| \widehat{Z}_s \big\|^2 \mathds{1}_{\{\widehat{Y}_{s} \neq 0\}} ds\\
		&+\int_{t \wedge \tau}^{\tau} e^{\frac{p}{2}\beta A_s+\mu s}\int_{E}\left\{\big|\widehat{Y}_{s-} +\widehat{U}_s(e)\big|^p -\big|\widehat{Y}_{s-}\big|^p-p \big|\widehat{Y}_{s-}\big|^{p-1}\check{\widehat{Y}}_{s-}\widehat{U}_s(e)\right\}N(ds,de)\\
		\leq& e^{\frac{p}{2}\beta A_{\tau}+\mu \tau}\big|\widehat{Y}_{\tau}\big|^p-\mu \int_{t \wedge \tau}^{\tau}e^{\frac{p}{2}\beta A_s+\mu s} \big|\widehat{Y}_s\big|^p ds+p\int_{t \wedge \tau}^{\tau}e^{\frac{p}{2}\beta A_s-\mu s} \big|\widehat{Y}_s\big|^{p-1} \big\|\widehat{U}_s\big\|_{\mathbb{L}^1_{Q}+\mathbb{L}^2_{Q}}\delta_sds\\
		&+p\int_{t \wedge \tau}^{\tau}e^{\frac{p}{2}\beta A_s-\mu s} \big|\widehat{Y}_s\big|^{p-1}\left|\widehat{f}(s,Y^2_s,Z^2_s,U_s^2)\right|ds -p\int_{t \wedge \tau}^{\tau}e^{\frac{p}{2}\beta A_s+\mu s} \big|\widehat{Y}_s\big|^{p-1} \check{\widehat{Y}}_s \widehat{Z}_s dW_s \\
		&
		-p\int_{t \wedge \tau}^{\tau}e^{\frac{p}{2}\beta A_s+\mu s}\int_{E} \big|\widehat{Y}_{s-}\big|^{p-1} \check{\widehat{Y}}_{s-} \widehat{U}_s(e)\tilde{N}(ds,de).
	\end{split}
\end{equation}	
In particular, by using the convexity of the function $\mathbb{R}_+\ni x\mapsto x^p$, we obtain
\begin{equation}\label{N control new}
	\begin{split}
		0 & \leq \int_{t \wedge \tau}^{\tau} e^{\frac{p}{2}\beta A_s+\mu s}\int_{E}\left\{\big|\widehat{Y}_{s-} +\widehat{U}_s(e)\big|^p -\big|\widehat{Y}_{s-}\big|^p-p \big|\widehat{Y}_{s-}\big|^{p-1}\check{\widehat{Y}}_{s-}\widehat{U}_s(e)\right\}N(ds,de)\\
	    & \leq e^{\frac{p}{2}\beta A_{\tau}+\mu \tau}\big|\widehat{Y}_{\tau}\big|^p-\mu \int_{t \wedge \tau}^{\tau}e^{\frac{p}{2}\beta A_s+\mu s} \big|\widehat{Y}_s\big|^p ds+p\int_{t \wedge \tau}^{\tau}e^{\frac{p}{2}\beta A_s-\mu s} \big|\widehat{Y}_s\big|^{p-1} \big\|\widehat{U}_s\big\|_{\mathbb{L}^1_{Q}+\mathbb{L}^2_{Q}}\delta_sds\\
	    &\quad+p\int_{t \wedge \tau}^{\tau}e^{\frac{p}{2}\beta A_s-\mu s} \big|\widehat{Y}_s\big|^{p-1}\left|\widehat{f}(s,Y^2_s,Z^2_s,U_s^2)\right|ds -p\int_{t \wedge \tau}^{\tau}e^{\frac{p}{2}\beta A_s+\mu s} \big|\widehat{Y}_s\big|^{p-1} \check{\widehat{Y}}_s \widehat{Z}_s dW_s \\
	    &\quad
	    -p\int_{t \wedge \tau}^{\tau}e^{\frac{p}{2}\beta A_s+\mu s}\int_{E} \big|\widehat{Y}_{s-}\big|^{p-1} \check{\widehat{Y}}_{s-} \widehat{U}_s(e)\tilde{N}(ds,de).
	\end{split}
\end{equation}
Now, since $(\widehat{Y},\widehat{U})\in\mathcal{S}^p_{\beta}\times\mathcal{L}^{p}_{N,\beta}$, Young's inequality $ab\leq \frac{p-1}{p}a^{\frac{p}{p-1}}+\frac{1}{p}b^p$, together with Lemmas \ref{Bichteler-Jacod} and \ref{add results}, as well as Remark \ref{Matlo rmk}, yields
\begin{equation}\label{O0}
	\begin{split}
		&p \mathbb{E}\left[\int_{0}^{T} e^{\frac{p}{2} \beta A_s} \big|\widehat{Y}_s\big|^{p-1} \big\|\widehat{U}_s\big\|_{\mathbb{L}^1_{Q}+\mathbb{L}^2_{Q}}\delta_sds\right] \\
		& \leq p \mathbb{E}\left[\left(\sup_{0\leq t\leq T} e^{\frac{p-1}{2}\beta A_t} |\widehat{Y}_t|^{p-1}\right)\left(\int_{0}^{T} e^{\frac{1}{2} \beta A_s}\big\|\widehat{U}_s\big\|_{\mathbb{L}^1_{Q}+\mathbb{L}^2_{Q}}ds \right)\right] \\
		&\leq (p-1)\mathbb{E}\left[\sup_{0\leq t\leq T}e^{\frac{p}{2}\beta A_t} |\widehat{Y}_t|^{p}\right] +\mathbb{E}\left[\left(\int_{0}^{T} e^{\frac{1}{2} \beta A_s}\big\|\widehat{U}_s\big\|_{\mathbb{L}^1_{Q}+\mathbb{L}^2_{Q}}ds\right)^p\right] <+\infty.
	\end{split}
\end{equation}
Using Young's inequality once again, together with the definition of the process $A$ given in \textsc{(H5)}, we get
\begin{equation*}
	\begin{split}
		&p\mathbb{E}\left[\int_{0}^{T}e^{\frac{p}{2} \beta A_s}\big|\widehat{Y}_s\big|^{p-1}\left|\widehat{f}(s,Y^2_s,Z^2_s,U_s^2)\right|ds\right]\\
		&=p\mathbb{E}\left[\int_{0}^{T}e^{\frac{p}{2} \beta A_s}\big|\widehat{Y}_s\big|^{p-1}|a_s|\left|\frac{\widehat{f}(s,Y^2_s,Z^2_s,U_s^2)}{a_s}\right|ds\right]\\
		& \leq (p-1)\mathbb{E}\left[\int_{0}^{T}e^{\frac{p}{2} \beta A_s}\big|\widehat{Y}_s\big|^{p} dA_s\right]+\mathbb{E}\left[\int_{0}^{T}e^{\frac{p}{2} \beta A_s}\left|\frac{\widehat{f}(s,Y^2_s,Z^2_s,U_s^2)}{a_s}\right|^p ds\right].
	\end{split}
\end{equation*}
Using assumption \textsc{(H3)}, the fact that $a_s\geq \eta_s\vee\delta_s$, and the elementary inequality \eqref{Dizzy}, we obtain
\begin{equation*}
	\begin{split}
		&\mathbb{E}\left[\int_{0}^{T}e^{\frac{p}{2} \beta A_s}\left|\frac{\widehat{f}(s,Y^2_s,Z^2_s,U_s^2)}{a_s}\right|^p ds\right]\\
		&\leq 2^{p-1}\left(\mathbb{E}\left[\int_{0}^{T}e^{\frac{p}{2} \beta A_s}\left|\frac{{f}_1(s,Y^2_s,Z^2_s,U_s^2)-{f}_1(s,Y^2_s,0,0)}{a_s}\right|^p ds\right]+\mathbb{E}\left[\int_{0}^{T}e^{\frac{p}{2} \beta A_s}\left|\frac{\widehat{f}(s,Y^2_s,0,0)}{a_s}\right|^p ds\right]\right.\\
		&\qquad\qquad\qquad\left.+\mathbb{E}\left[\int_{0}^{T}e^{\frac{p}{2} \beta A_s}\left|\frac{{f}_2(s,Y^2_s,0,0)-{f}_2(s,Y^2_s,Z^2_s,U_s^2)}{a_s}\right|^p ds\right]\right)\\
		&\leq 2^{p-1} \left(2\mathbb{E}\left[\int_{0}^{T}e^{\frac{p}{2} \beta A_s}\big|{Z}^2_s\big|^{p}ds\right]+2\mathbb{E}\left[\int_{0}^{T}e^{\frac{p}{2} \beta A_s}\big\|{U}^2_s\big\|^{p}_{\mathbb{L}^1_{Q}+\mathbb{L}^2_{Q}} ds\right]+\mathbb{E}\left[\int_{0}^{T}e^{\frac{p}{2} \beta A_s}\left|\frac{\widehat{f}(s,Y^2_s,0,0)}{a_s}\right|^p ds\right]\right).
	\end{split}
\end{equation*}
Using inequality \eqref{Impo_Ine} and the fact that $\widehat{Z}\in\mathcal{H}^{p}_\beta$, we get
$$
\mathbb{E}\left[\int_{0}^{T}e^{\frac{p}{2} \beta A_s}\big|{Z}^2_s\big|^{p}ds\right] \leq T^{1-\frac{p}{2}}\mathbb{E}\left[\left(\int_{0}^{T}e^{\beta A_s}\big|{Z}^2_s\big|^{2}ds\right)^{\frac{p}{2}}\right]<+\infty.
$$
From Corollary \ref{cor L1L2 control} and the fact that $\widehat{U}\in\mathcal{L}^{p}_{N,\beta}$, we deduce that there exists a constant $\mathfrak{C}_{p,T}$ such that
$$
\mathbb{E}\left[\int_{0}^{T}e^{\frac{p}{2} \beta A_s}\big\|{U}^2_s\big\|^{p}_{\mathbb{L}^1_{Q}+\mathbb{L}^2_{Q}} ds\right] \leq \mathfrak{C}_{p,T} \left(\mathbb{E}\left[\left(\int_0^T\int_{{E}}e^{\beta A_s}|U^2_s(e)|^2N(ds,de)\right)^\frac{p}{2}\right]\right)<+\infty.
$$
Finally, by assumptions \textsc{(H4)}--\textsc{(H6)}, we have $a_s\geq \sqrt{\phi_s}$. Hence, using the fact that $\widehat{Y}\in\mathcal{S}^{p,A}_{\beta}$, we also obtain
\begin{equation*}
	\begin{split}
		\mathbb{E}\left[\int_{0}^{T}e^{\frac{p}{2} \beta A_s}\left|\frac{\widehat{f}(s,Y^2_s,0,0)}{a_s}\right|^p ds\right] \leq \frac{2}{\epsilon^p}\left[\int_{0}^{T}e^{\frac{p}{2} \beta A_s}\left|\varphi_s\right|^p ds\right]+\frac{2}{\epsilon^{\frac{p(3-p)}{2(p-1)}}}\mathbb{E}\left[\int_{0}^{T}e^{\frac{p}{2} \beta A_s}\left|Y^2_s\right|^p dA_s\right]<+\infty.
	\end{split}
\end{equation*}
Hence, we deduce that
\begin{equation}\label{O1}
	\mathbb{E}\left[\int_{0}^{T}e^{\frac{p}{2} \beta A_s}\big|\widehat{Y}_s\big|^{p-1}\left|\widehat{f}(s,Y^2_s,Z^2_s,U_s^2)\right|ds\right]<+\infty.
\end{equation}

Next, we observe that the stochastic integrals appearing on the right-hand side of \eqref{N control new} are uniformly integrable martingales. Indeed, by the BDG inequality (see Remark \ref{rmq p}) and Young's inequality, we have
\begin{equation*}
	\begin{split}
		&\mathbb{E}\left[\sup_{t \in [0,T]}\left|\int_{0}^{t}e^{\frac{p}{2}\beta A_s+\mu s} \big|\widehat{Y}_s\big|^{p-1} \check{\widehat{Y}}_s \widehat{Z}_s dW_s\right|\right]\\
		&\leq e^{\mu T} \mathfrak{C}_T \mathbb{E}\left[\left( \sup_{t \in [0,T]}e^{\frac{p-1}{2}\beta A_s}\big|\widehat{Y}_s\big|^{p-1}\right) \left(\int_{0}^{T}e^{\beta A_s}\big\|\widehat{Z}_s\big\|^2 ds\right)^{\frac{1}{2}}\right]\\
		&\leq \mathfrak{C}_T\left(\frac{p}{p-1}\mathbb{E}\left[\sup_{t \in [0,T]}e^{\frac{p}{2}\beta A_s}\big|\widehat{Y}_s\big|^{p}\right] +\frac{1}{p}\mathbb{E}\left[\left( \int_{0}^{T}e^{\beta A_s}\big\|\widehat{Z}_s\big\|^2 ds\right)^{\frac{p}{2}}\right] \right)<+\infty.
	\end{split}
\end{equation*}
Similarly, we have
\begin{equation*}
	\begin{split}
		&\mathbb{E}\left[\sup_{t \in [0,T]}\left|\int_{0}^{t}e^{\frac{p}{2}\beta A_s+\mu s}\int_{E} \big|\widehat{Y}_{s-}\big|^{p-1} \check{\widehat{Y}}_{s-} \widehat{U}_s(e)\tilde{N}(ds,de)\right|\right]\\
		&\leq \mathfrak{c} \mathbb{E}\left[\left(e^{p\beta A_s+2\mu s}\int_{E} \big|\widehat{Y}_{s-}\big|^{2(p-1)}  \big|\widehat{U}_s(e)\big|^2 N(ds,de)\right)^{\frac{1}{2}}\right]\\
		&\leq \mathfrak{C}_T\left(\frac{p}{p-1}\mathbb{E}\left[\sup_{t \in [0,T]}e^{\frac{p}{2}\beta A_s}\big|\widehat{Y}_s\big|^{p}\right] +\frac{1}{p}\mathbb{E}\left[\left( \int_{0}^{T}e^{\beta A_s}\big|\widehat{U}_s(e)\big|^2 N(ds,de)\right)^{\frac{p}{2}}\right] \right)<+\infty.
	\end{split}
\end{equation*}
Returning to \eqref{N control new} and using \eqref{O0}, \eqref{O1}, the martingale property of the stochastic integrals, together with the fact that $\widehat{Y}\in\mathcal{S}^p_{\beta}$ (or $\widehat{Y}\in\mathcal{S}^{p,A}_{\beta}$), which ensures the $p$-integrability of the Lebesgue integral, we derive
$$
\mathbb{E}\left[\int_{0}^{T} e^{\frac{p}{2}\beta A_s+\mu s}\int_{E}\left\{\big|\widehat{Y}_{s-} +\widehat{U}_s(e)\big|^p -\big|\widehat{Y}_{s-}\big|^p-p \big|\widehat{Y}_{s-}\big|^{p-1}\check{\widehat{Y}}_{s-}\widehat{U}_s(e)\right\}N(ds,de)\right]<+\infty.
$$
In particular, it follows from \cite[Lemma 3.67]{Jacod1979} that
$$
\mathbb{E}\left[\int_{0}^{T} e^{\frac{p}{2}\beta A_s+\mu s}\int_{E}\left\{\big|\widehat{Y}_{s-} +\widehat{U}_s(e)\big|^p -\big|\widehat{Y}_{s-}\big|^p-p \big|\widehat{Y}_{s-}\big|^{p-1}\check{\widehat{Y}}_{s-}\widehat{U}_s(e)\right\}Q(de)ds\right]<+\infty.
$$
Then, for any $t\in[0,T]$, each $\tau\in\mathcal{T}_{0,T}$, and any $\mu\in\mathbb{R}$, we can write
\begin{equation}\label{O2}
	\begin{split}
		&\int_{t \wedge \tau}^{\tau} e^{\frac{p}{2}\beta A_s+\mu s}\int_{E}\left\{\big|\widehat{Y}_{s-} +\widehat{U}_s(e)\big|^p -\big|\widehat{Y}_{s-}\big|^p-p \big|\widehat{Y}_{s-}\big|^{p-1}\check{\widehat{Y}}_{s-}\widehat{U}_s(e)\right\}N(ds,de)\\
		&=\int_{t \wedge \tau}^{\tau} e^{\frac{p}{2}\beta A_s+\mu s}\int_{E}\left\{\big|\widehat{Y}_{s-} +\widehat{U}_s(e)\big|^p -\big|\widehat{Y}_{s-}\big|^p-p \big|\widehat{Y}_{s-}\big|^{p-1}\check{\widehat{Y}}_{s-}\widehat{U}_s(e)\right\}\tilde{N}(ds,de)\\
		&\quad - \int_{t \wedge \tau}^{\tau} e^{\frac{p}{2}\beta A_s+\mu s}\int_{E}\left\{\big|\widehat{Y}_{s-} +\widehat{U}_s(e)\big|^p -\big|\widehat{Y}_{s-}\big|^p-p \big|\widehat{Y}_{s-}\big|^{p-1}\check{\widehat{Y}}_{s-}\widehat{U}_s(e)\right\}Q(de)ds.
	\end{split}
\end{equation}
Applying Lemma \ref{Lemma betap for inequality} to the jump part in \eqref{Ito for p<2--v1.1} and using equality \eqref{O2}, we can write
\begin{equation}\label{Ito for p<2--v1}
	\begin{split}
		&e^{\frac{p}{2}\beta A_{t \wedge \tau}+\mu (t \wedge \tau) }\big|\widehat{Y}_{t \wedge \tau}\big|^p+\frac{p}{2}\beta\int_{t \wedge \tau}^{\tau}e^{\frac{p}{2}\beta A_s+\mu s} \big|\widehat{Y}_s\big|^p dA_s
		+\frac{\mathfrak{b}_p}{2}\int_{t \wedge \tau}^{\tau}e^{\frac{p}{2}\beta A_s+\mu s} \big|\widehat{Y}_s\big|^{p-2}\big\| \widehat{Z}_s \big\|^2 \mathds{1}_{\{\widehat{Y}_{s} \neq 0\}} ds\\
		&+\frac{\mathfrak{b}_p}{2}\int_{t \wedge \tau}^\tau\int_{E}e^{\frac{p}{2}\beta A_s+\mu s}|\widehat{U}_s(e)|^2\left(|\widehat{Y}_{s-}|^{2}\vee|\widehat{Y}_{s-}+U_s(e)|^2\right)^{\frac{p-2}{2}}\mathds{1}_{\{|\widehat{Y}_{s-}|\vee|\widehat{Y}_{s-}+U_s(e)|\neq 0\}}N(ds,de)\\
		\leq& e^{\frac{p}{2}\beta A_{\tau}+\mu \tau}\big|\widehat{Y}_{ \tau}\big|^p+p\int_{t \wedge \tau}^{\tau}e^{\frac{p}{2}\beta A_s-\mu s} \big|\widehat{Y}_s\big|^{p-1} \big\|\widehat{U}_s\big\|_{\mathbb{L}^1_{Q}+\mathbb{L}^2_{Q}}\delta_sds \\
		&+p\int_{t \wedge \tau}^{\tau}e^{\frac{p}{2}\beta A_s+\mu s} \big|\widehat{Y}_s\big|^{p-1}\left|\widehat{f}(s,Y^2_s,Z^2_s,U_s^2)\right|ds -\mu \int_{t \wedge \tau}^{\tau}e^{\frac{p}{2}\beta A_s+\mu s} \big|\widehat{Y}_s\big|^p ds\\
		&-p\int_{t \wedge \tau}^{\tau}e^{\frac{p}{2}\beta A_s+\mu s} \big|\widehat{Y}_s\big|^{p-1} \check{\widehat{Y}}_s \widehat{Z}_s dW_s
		-p\int_{t \wedge \tau}^{\tau}e^{\frac{p}{2}\beta A_s+\mu s}\int_{E} \big|\widehat{Y}_{s-}\big|^{p-1} \check{\widehat{Y}}_{s-} \widehat{U}_s(e)\tilde{N}(ds,de)\\
		&-\frac{1}{2}\int_{t \wedge \tau}^{\tau} e^{\frac{p}{2}\beta A_s+\mu s}\int_{E}\left\{\big|\widehat{Y}_{s-} +\widehat{U}_s(e)\big|^p -\big|\widehat{Y}_{s-}\big|^p-p \big|\widehat{Y}_{s-}\big|^{p-1}\check{\widehat{Y}}_{s-}\widehat{U}_s(e)\right\}\tilde{N}(ds,de)\\
		&-\frac{1}{2}\int_{t \wedge \tau}^{\tau} e^{\frac{p}{2}\beta A_s+\mu s}\int_{E}\left\{\big|\widehat{Y}_{s-} +\widehat{U}_s(e)\big|^p -\big|\widehat{Y}_{s-}\big|^p-p \big|\widehat{Y}_{s-}\big|^{p-1}\check{\widehat{Y}}_{s-}\widehat{U}_s(e)\right\}Q(de)ds
	\end{split}
\end{equation}	
We now use the technical estimates stated in Lemma \ref{lemma mrd}: for each $k>0$, let $\varrho_{p,k}$ be chosen as in that lemma and set $\varrho_{p,k}=\varrho(\varrho_{p,k},p)|Y_{s-}|\mathds{1}_{\{Y_{s-}\neq0\}}+\varrho\mathds{1}_{\{Y_{s-}=0\}}$, where $\varrho>0$ is arbitrary; then, by assumption (H3), the definition of the norm in $\mathbb{L}^1_Q+\mathbb{L}^2_Q$, and Young's inequality, we obtain, for any $\varepsilon>0$,
\begin{equation*}
	\begin{split}
		&p\int_{t \wedge \tau}^\tau e^{\frac{p}{2}\beta A_s+\mu s}  |\widehat{Y}_{s}|^{p-1}\|\widehat{U}_{s}\|_{\mathbb{L}^1_Q+\mathbb{L}^2_Q} \delta_s ds\\
		& \leq p  \int_{t \wedge \tau}^\tau e^{\frac{p}{2}\beta A_s+\mu s}  | \widehat{Y}_{s}|^{p-1} \left(\| \widehat{U}_{s}\mathds{1}_{|\widehat{U}_{s}| < \varrho} \|_{\mathbb{L}^2_Q}+\| \widehat{U}_{s}\mathds{1}_{|\widehat{U}_{s}| \geq  \varrho} \|_{\mathbb{L}^1_Q}\right) \delta_s ds\\
		& \leq \frac{p}{2 \epsilon^{\frac{2-p}{2(p-1)}} \varepsilon} \int_{t \wedge \tau}^\tau e^{\frac{p}{2}\beta A_s} |\bar Y_{s}|^{p}dA_s +\frac{p \varepsilon}{2} \int_{t \wedge \tau}^\tau e^{\frac{p}{2}\beta A_s} |\widehat{Y}_{s-}|^{p-2} \mathds{1}_{\{\widehat{Y}_{s-} \neq 0\}} \| \widehat{U}_{s}\mathds{1}_{| \widehat{U}_{s}| < \varrho_{p,k}} \|^2_{\mathbb{L}^2_\lambda}ds\\
		&+  p  \int_{t \wedge \tau}^\tau e^{\frac{p}{2}\beta A_s} |\widehat{Y}_{s}|^{p-1} \|\widehat{U}_{s}\mathds{1}_{|\widehat{U}_{s}| \geq  \varrho_{p,k}} \|_{\mathbb{L}^1_Q} \delta_s ds.
	\end{split}
\end{equation*}
Plugging this into \eqref{Ito for p<2--v1}, we get
\begin{equation}\label{Ito for p<2--v1.2}
	\begin{split}
		&e^{\frac{p}{2}\beta A_{t \wedge \tau}+\mu (t \wedge \tau) }\big|\widehat{Y}_{t \wedge \tau}\big|^p+\frac{p}{2}\left(\beta-\frac{1}{\epsilon^{\frac{2-p}{2(p-1)}} \varepsilon}\right)\int_{t \wedge \tau}^{\tau}e^{\frac{p}{2}\beta A_s+\mu s} \big|\widehat{Y}_s\big|^p dA_s
		+\frac{\mathfrak{b}_p}{2}\int_{t \wedge \tau}^{\tau}e^{\frac{p}{2}\beta A_s+\mu s} \big|\widehat{Y}_s\big|^{p-2}\big\| \widehat{Z}_s \big\|^2 \mathds{1}_{\{\widehat{Y}_{s} \neq 0\}} ds\\
		&+\frac{\mathfrak{b}_p}{2}\int_{t \wedge \tau}^\tau\int_{E}e^{\frac{p}{2}\beta A_s+\mu s}|\widehat{U}_s(e)|^2\left(|\widehat{Y}_{s-}|^{2}\vee|\widehat{Y}_{s-}+U_s(e)|^2\right)^{\frac{p-2}{2}}\mathds{1}_{\{|\widehat{Y}_{s-}|\vee|\widehat{Y}_{s-}+U_s(e)|\neq 0\}}N(ds,de)\\
		\leq& e^{\frac{p}{2}\beta A_{\tau}+\mu \tau}\big|\widehat{Y}_{ \tau}\big|^p
		+p\int_{t \wedge \tau}^{\tau}e^{\frac{p}{2}\beta A_s+\mu s} \big|\widehat{Y}_s\big|^{p-1}\left|\widehat{f}(s,Y^2_s,Z^2_s,U_s^2)\right|ds -\mu \int_{t \wedge \tau}^{\tau}e^{\frac{p}{2}\beta A_s+\mu s} \big|\widehat{Y}_s\big|^p ds\\
		&-p\int_{t \wedge \tau}^{\tau}e^{\frac{p}{2}\beta A_s+\mu s} \big|\widehat{Y}_s\big|^{p-1} \check{\widehat{Y}}_s \widehat{Z}_s dW_s
		-p\int_{t \wedge \tau}^{\tau}e^{\frac{p}{2}\beta A_s+\mu s}\int_{E} \big|\widehat{Y}_{s-}\big|^{p-1} \check{\widehat{Y}}_{s-} \widehat{U}_s(e)\tilde{N}(ds,de)\\
		&-\frac{1}{2}\int_{t \wedge \tau}^{\tau} e^{\frac{p}{2}\beta A_s+\mu s}\int_{E}\left\{\big|\widehat{Y}_{s-} +\widehat{U}_s(e)\big|^p -\big|\widehat{Y}_{s-}\big|^p-p \big|\widehat{Y}_{s-}\big|^{p-1}\check{\widehat{Y}}_{s-}\widehat{U}_s(e)\right\}\tilde{N}(ds,de)\\
		&-\frac{1}{2}\int_{t \wedge \tau}^{\tau} e^{\frac{p}{2}\beta A_s+\mu s}\int_{E}\left\{\big|\widehat{Y}_{s-} +\widehat{U}_s(e)\big|^p -\big|\widehat{Y}_{s-}\big|^p-p \big|\widehat{Y}_{s-}\big|^{p-1}\check{\widehat{Y}}_{s-}\widehat{U}_s(e)\right\}Q(de)ds\\
		&+\frac{p \varepsilon}{2} \int_{t \wedge \tau}^\tau e^{\frac{p}{2}\beta A_s} |\widehat{Y}_{s-}|^{p-2} \mathds{1}_{\{\widehat{Y}_{s-} \neq 0\}} \| \widehat{U}_{s}\mathds{1}_{| \widehat{U}_{s}| < \varrho_{p,k}} \|^2_{\mathbb{L}^2_\lambda}ds+  p  \int_{t \wedge \tau}^\tau e^{\frac{p}{2}\beta A_s} |\widehat{Y}_{s}|^{p-1} \|\widehat{U}_{s}\mathds{1}_{|\widehat{U}_{s}| \geq  \varrho_{p,k}} \|_{\mathbb{L}^1_Q} \delta_s ds.
	\end{split}
\end{equation}	
Consider the sequence of stopping times $(\tau_k)_{k\geq1}$ defined in \eqref{SPR} and explained in Remark \ref{kech}. We then take $\tau:=\tau_k\wedge\vartheta_k$, where $(\vartheta_k)_{k\geq1}$ is a localization sequence for the local martingale
$$
\frac{1}{2}\int_{0}^{\cdot} e^{\frac{p}{2}\beta A_s+\mu s}\int_{E}\left\{\big|\widehat{Y}_{s-} +\widehat{U}_s(e)\big|^p -\big|\widehat{Y}_{s-}\big|^p-p \big|\widehat{Y}_{s-}\big|^{p-1}\check{\widehat{Y}}_{s-}\widehat{U}_s(e)\right\}\tilde{N}(ds,de).
$$
First, we point out that, by applying Lemma \ref{lemma mrd} to the last three terms in inequality \eqref{Ito for p<2--v1.2} and choosing $\varepsilon>0$ such that $\varepsilon\leq \rho_{p,k}$ for every $k\geq1$, with $\rho_{p,k}\in\left(0,\frac{p-1}{2}\right)$, we see that, for each $k\geq1$,
\begin{equation}\label{neg}
	\begin{split}
		&-\frac{1}{2}\int_{t \wedge \tau}^{\tau} e^{\frac{p}{2}\beta A_s+\mu s}\int_{E}\left\{\big|\widehat{Y}_{s-} +\widehat{U}_s(e)\big|^p -\big|\widehat{Y}_{s-}\big|^p-p \big|\widehat{Y}_{s-}\big|^{p-1}\check{\widehat{Y}}_{s-}\widehat{U}_s(e)\right\}Q(de)ds\\
		&+\frac{p \varepsilon}{2} \int_{t \wedge \tau}^\tau e^{\frac{p}{2}\beta A_s} |\widehat{Y}_{s-}|^{p-2} \mathds{1}_{\{\widehat{Y}_{s-} \neq 0\}} \| \widehat{U}_{s}\mathds{1}_{| \widehat{U}_{s}| < \varrho} \|^2_{\mathbb{L}^2_\lambda}ds+  p  \int_{t \wedge \tau}^\tau e^{\frac{p}{2}\beta A_s} |\widehat{Y}_{s}|^{p-1} \|\widehat{U}_{s}\mathds{1}_{|\widehat{U}_{s}| \geq  \varrho} \|_{\mathbb{L}^1_Q} \delta_s ds\\
		&\leq -\frac{1}{2}\int_{t \wedge \tau}^\tau \int_{E}e^{\frac{p}{2}\beta A_s}\left[| \widehat{Y}_{s-}+\widehat{U}_s(e)|^{p}-|\widehat{Y}_{s-}|^p-p|\widehat{Y}_{s-}|^{p-1}\check{ \widehat{Y}}_{s-} \widehat{U}_s(e)\right]Q(de)ds\\
		&+\frac{p \rho_{p,k}}{2} \int_{t \wedge \tau}^\tau e^{\frac{p}{2}\beta A_s} |Y_{s-}|^{p-2} \|\widehat{U}_{s}\mathds{1}_{|\widehat{U}_{s}| < \varrho_{p,k}} \|_{\mathbb{L}^2_Q}ds
		+  p k \int_{t \wedge \tau}^\tau e^{\frac{p}{2}\beta A_s} | \widehat{Y}_{s}|^{p-1} \|\bar V_{s}\mathds{1}_{|\widehat{U}_{s}| \geq  \varrho_{p,k}} \|_{\mathbb{L}^1_Q}ds \\
		&\leq 0.
	\end{split}
\end{equation} 
In addition, by Young's inequality, we have
\begin{equation}\label{O3}
	\begin{split}
	&p\int_{t \wedge \tau}^{\tau}e^{\frac{p}{2}\beta A_s+\mu s} \big|\widehat{Y}_s\big|^{p-1}\left|\widehat{f}(s,Y^2_s,Z^2_s,U_s^2)\right|ds\\
	&=p\int_{t \wedge \tau}^{\tau}e^{\frac{p-1}{2}\beta A_s+\frac{p-1}{p}\mu s} \big|\widehat{Y}_s\big|^{p-1}\left(e^{\frac{p}{2}\beta A_s+\frac{1}{p}\mu s}\left|\widehat{f}(s,Y^2_s,Z^2_s,U_s^2)\right|\right)ds\\
	&\leq (p-1)\int_{t \wedge \tau}^{\tau}e^{\frac{p}{2}\beta A_s+\mu s}\big|\widehat{Y}_s\big|^{p}ds+\int_{t \wedge \tau}^{\tau}e^{\frac{p}{2}\beta A_s+\mu s} \left|\widehat{f}(s,Y^2_s,Z^2_s,U_s^2)\right|^pds.
	\end{split}
\end{equation}
Plugging \eqref{neg} and \eqref{O3} into \eqref{Ito for p<2--v1.2}, taking expectations on both sides, and using the martingale property of the stochastic integrals on $[t\wedge\tau,\tau]$, where $\tau=\tau_k\wedge\vartheta_k$ as above, we obtain, after passing to the limit as $k\to+\infty$ with $t=0$,\footnote{Note that $e^{\frac{p}{2}\beta A_{\tau_k}}|\widehat{Y}_{\tau_k}|^p\to e^{\frac{p}{2}\beta A_T}|\widehat{\xi}|^p$ $\mathbb{P}$-a.s. as $k\to+\infty$. By the integrability condition (H6), we have $\mathbb{E}\left[e^{\frac{p}{2}\beta A_T}|\widehat{\xi}|^p\right]<+\infty$. Hence, by the dominated convergence theorem, we deduce that $e^{\frac{p}{2}\beta A_{\tau_k}}|\widehat{Y}_{\tau_k}|^p\to e^{\frac{p}{2}\beta A_T}|\widehat{\xi}|^p$ in $\mathbb{L}^1(\Omega)$ as $k\to+\infty$.} using the monotone convergence theorem and choosing $\beta\geq \frac{2}{p}\left(\frac{1}{\epsilon^{\frac{2-p}{2(p-1)}}\varepsilon}+1\right)=:\beta^\ast_{p,\epsilon}$ and $\mu>p-1$, the following estimate:
\begin{equation}\label{Ito for p<2--v1.3}
	\begin{split}
		&\mathbb{E}\left[ \int_{0}^{T}e^{\frac{p}{2}\beta A_s+\mu s} \big|\widehat{Y}_s\big|^p dA_s\right] 
		+\frac{\mathfrak{b}_p}{2}\mathbb{E}\left[ \int_{0}^{T}e^{\frac{p}{2}\beta A_s+\mu s} \big|\widehat{Y}_s\big|^{p-2}\big\| \widehat{Z}_s \big\|^2 \mathds{1}_{\{\widehat{Y}_{s} \neq 0\}} ds\right] \\
		&+\frac{\mathfrak{b}_p}{2}\mathbb{E}\left[ \int_{0}^T e^{\frac{p}{2}\beta A_s+\mu s}\int_{E}|\widehat{U}_s(e)|^2\left(|\widehat{Y}_{s-}|^{2}\vee|\widehat{Y}_{s-}+U_s(e)|^2\right)^{\frac{p-2}{2}}\mathds{1}_{\{|\widehat{Y}_{s-}|\vee|\widehat{Y}_{s-}+\widehat{U}_s(e)|\neq 0\}}N(ds,de)\right] \\
		&\leq \mathbb{E}\left[  e^{\frac{p}{2}\beta A_{T}+\mu T}\big|\widehat{\xi}\big|^p\right] 
		+\mathbb{E}\left[ \int_{0}^{T}e^{\frac{p}{2}\beta A_s+\mu s} \left|\widehat{f}(s,Y^2_s,Z^2_s,U_s^2)\right|^p ds\right] .
	\end{split}
\end{equation}	
Coming back to \eqref{Ito for p<2--v1.1}, using this formula with $\mu=0$ and $\tau=T$, and then taking the supremum and expectation on both sides, we get
\begin{equation}\label{sup for p less than 2}
	\begin{split}
		\mathbb{E}\left[\sup_{t \in [0,T]} e^{\frac{p}{2}\beta A_{t  }}\big|\widehat{Y}_{t}\big|^p \right]
		\leq& \mathbb{E}\left[ e^{\frac{p}{2}\beta A_{T}}\big|\widehat{\xi}\big|^p\right] +p\mathbb{E}\int_{0}^{T}e^{\frac{p}{2}\beta A_s} \big|\widehat{Y}_s\big|^{p-1}\left|\widehat{f}(s,Y^2_s,Z^2_s,U_s^2)\right|ds  \\
		&+p\mathbb{E}\left[\sup_{t \in [0,T]}\left|\int_{t}^{T}e^{\frac{p}{2}\beta A_s} \big|\widehat{Y}_s\big|^{p-1} \check{\widehat{Y}}_s \widehat{Z}_s dW_s\right|\right]\\
		& +p\mathbb{E}\left[\sup_{t \in [0,T]}\left|\int_{t}^{T}e^{\frac{p}{2}\beta A_s}\int_{E} \big|\widehat{Y}_{s-}\big|^{p-1} \check{\widehat{Y}}_{s-} \widehat{U}_s(e)\tilde{N}(ds,de)\right|\right].
	\end{split}
\end{equation}
Using the BDG inequality, we have
\begin{equation*}
	\begin{split}
		&p\mathbb{E}\left[\sup_{t \in [0,T]}\left|\int_{t}^{T}e^{\frac{p}{2}\beta A_s} \big|\widehat{Y}_s\big|^{p-1} \check{\widehat{Y}}_s \widehat{Z}_s dW_s\right|\right]\\
		&\leq p \mathfrak{c} \mathbb{E}\left[\left(\int_{0}^{T}e^{p\beta A_s} \big|\widehat{Y}_s\big|^{2(p-1)} \big\|\widehat{Z}_s\big\|^2 \mathds{1}_{\{\widehat{Y}_s \neq 0\}} ds\right)^{\frac{1}{2}}\right]\\
		&\leq \mathbb{E}\left[\left( \sup_{t \in [0,T]}e^{\frac{p}{4}\beta}\big|\widehat{Y}_s\big|^{\frac{p}{2}}\right) \left( p^2 \mathfrak{c}^2\int_{0}^{T}e^{\frac{p}{2}\beta A_s} \big|\widehat{Y}_s\big|^{p-2} \big\|\widehat{Z}_s\big\|^2 \mathds{1}_{\{\widehat{Y}_s \neq 0\}} ds\right)^{\frac{1}{2}}\right]\\
		&\leq \frac{1}{6} \mathbb{E}\left[\sup_{t \in [0,T]}e^{\frac{p}{2}\beta}\big|\widehat{Y}_s\big|^{p}\right]+\frac{3}{2}p^2 \mathfrak{c}^2\mathbb{E}\left[\int_{0}^{T}e^{\frac{p}{2}\beta A_s} \big|\widehat{Y}_s\big|^{p-2} \big\|\widehat{Z}_s\big\|^2 \mathds{1}_{\{\widehat{Y}_s \neq 0\}} ds\right]
	\end{split}
\end{equation*}
The last line follows from the elementary inequality $ab\leq \frac{1}{6}a^2+\frac{3}{2}b^2$, where we recall that $\mathfrak{c}>0$ denotes the universal BDG constant. By a similar computation, and using the localizing sequence of stopping times considered in Lemma \ref{Lemma betap for inequality}, we get
\begin{equation*}
	\begin{split}
	&\mathbb{E}\left[ \sup_{0\leq t\leq T}\left|\int_0^te^{\frac{p}{2}\beta A_s}\int_{E}|\widehat{Y}_{s-}|^{p-1}\check{\widehat{Y}}_{s-} \widehat{U}_s(e)\tilde{N}(ds,de)\right|\right] \\
	&\leq\mathfrak{c}\mathbb{E}\left[\left(\int_0^Te^{p\beta A_s} \int_{E}\left(|\widehat{Y}_{s-}|^{2}\vee|\widehat{Y}_{s-}+\widehat{U}_s(e)|^2\right)^{p-1}\mathds{1}_{\{|\widehat{Y}_{s-}|\vee|\widehat{Y}_{s-}+\widehat{U}_s(e)|\neq 0\}}|\widehat{U}_s(e)|^2 N(ds,de)\right)^{\frac{1}{2}}\right]\\
	&\leq\mathfrak{c}\mathbb{E}\left[\left( \sup_{0\leq t\leq T}e^{\frac{p}{4}\beta A_t}|\widehat{Y}_{t}|^{\frac{p}{2}}\right) \right. \\
	&\left. \qquad\qquad \times \left( \int_0^Te^{\frac{p}{2}\beta A_s}\int_{E}|\widehat{U}_s(e)|^2\left(|\widehat{Y}_{s-}|^{2}\vee|\widehat{Y}_{s-}+\widehat{U}_s(e)|^2\right)^{\frac{p-2}{2}}\mathds{1}_{\{|\widehat{Y}_{s-}|\vee|\widehat{Y}_{s-}+\widehat{U}_s(e)|\neq 0\}}N(ds,de)\right)^{\frac{1}{2}}\right]\\
	&\leq\frac{1}{6}\mathbb{E}\left[\sup_{0\leq t\leq T}e^{\frac{p}{2}\beta A_t}|\widehat{Y}_{t}|^{p}\right]\\
	&\qquad+\frac{3}{2}p^2 \mathfrak{c}^2\mathbb{E}\left[\int_0^Te^{\frac{p}{2}\beta A_s}\int_{E}|\widehat{U}_s(e)|^2\left(|\widehat{Y}_{s-}|^{2}\vee|\widehat{Y}_{s-}+\widehat{U}_s(e)|^2\right)^{\frac{p-2}{2}}\mathds{1}_{\{|\widehat{Y}_{s-}|\vee|\widehat{Y}_{s-}+\widehat{U}_s(e)|\neq 0\}}N(ds,de)\right].
	\end{split}
\end{equation*}
Finally, using Young's and Jensen's inequalities, we get
\begin{equation}\label{Paranoia}
	\begin{split}
		&p\mathbb{E}\left[\int_{0}^{T}e^{\frac{p}{2}\beta A_s} \big|\widehat{Y}_s\big|^{p-1}\left|\widehat{f}(s,Y^2_s,Z^2_s,U_s^2)\right|ds\right]\\
		&=p	\mathbb{E}\left[\int_{0}^{T}e^{\frac{p-1}{2}\beta A_s} \big|\widehat{Y}_s\big|^{p-1} e^{\frac{\beta}{2} A_s}\left|\widehat{f}(s,Y^2_s,Z^2_s,U_s^2)\right|ds\right]\\
		& \leq p\mathbb{E}\left[\left(\frac{1}{6(p-1)}\right)^{\frac{p-1}{p}}\left(\sup_{t \in [0,T]}e^{\frac{p-1}{2}\beta A_s} \big|\widehat{Y}_s\big|^{p-1}\right)\left(\frac{1}{6(p-1)}\right)^{\frac{1-p}{p}}\int_{0}^{T} e^{\frac{\beta}{2} A_s}\left|\widehat{f}(s,Y^2_s,Z^2_s,U_s^2)\right|ds\right]\\
		& \leq \frac{1}{6}\mathbb{E}\left[\sup_{t \in [0,T]}e^{\frac{p}{2}\beta A_s}\big|\widehat{Y}_t\big|^{p}\right]+\left(6 T(p-1)\right)^{p-1}\mathbb{E}\left[\int_{0}^{T} e^{\frac{p}{2}\beta A_s}\left|\widehat{f}(s,Y^2_s,Z^2_s,U_s^2)\right|^p ds\right].
	\end{split}
\end{equation}
	Plugging the three estimates above into \eqref{sup for p less than 2}, together with \eqref{Ito for p<2--v1.3}, we deduce that there exists a constant $\mathfrak{C}_{p,T}$ such that, for any $\beta\geq\beta^\ast_{p,\epsilon}$, we have
\begin{equation}\label{Y sup p less than 2}
	\begin{split}
		\mathbb{E}\left[\sup_{t \in [0,T]} e^{\frac{p}{2}\beta A_{t  }}\big|\widehat{Y}_{t}\big|^p \right]
		\leq & \mathfrak{C}_{p,T} \left(\mathbb{E}\left[ e^{\frac{p}{2}\beta A_{T}}\big|\widehat{\xi}\big|^p\right]  +\mathbb{E}\int_{0}^{T}e^{\beta A_s}\left|\widehat{f}(s,Y^2_s,Z^2_s,U_s^2)\right|^pds\right).
	\end{split}
\end{equation}
	Let $\nu_\varepsilon$ be the function defined on $\mathbb{R}$ as in the proof of Lemma \ref{Lemma betap for inequality}. Note that, from the definition of the function $\nu_\varepsilon$, it is easy to see that 
	\begin{equation*}
	\begin{split}
		e^{\frac{2-p}{4}\beta A_s}\left( \nu_\varepsilon\left( \big|\widehat{Y}_{s-}\big|\vee \big|\widehat{Y}_{s-}+\widehat{U}_s(e)\big|\right) \right) ^{\frac{2-p}{2}}&=\left( e^{\frac{1}{2}\beta A_s}\nu_\varepsilon\left( \big|\widehat{Y}_{s-}\big|\vee \big|\widehat{Y}_{s-}+\widehat{U}_s(e)\big|\right) \right) ^{\frac{2-p}{2}}\\
		&=\left[ \nu_{\varepsilon_p}\left( e^{\frac{1}{2}\beta A_s}\left( \big|\widehat{Y}_{s-}\big|\vee \big|\widehat{Y}_{s-}+\widehat{U}_s(e)\big|\right) \right) \right] ^{\frac{2-p}{2}}
	\end{split}
	\end{equation*}
with $\varepsilon_p:=\varepsilon e^{\frac{1}{2}\beta A_s}$ which tend to zero as $\varepsilon \downarrow 0$ since $e^{\frac{1}{2}\beta A_s} \leq e^{\beta A_T}<+\infty$ a.s. This last property is derived from assumption \textsc{(H4)} implying $\mathbb{E}\left[\int_{0}^{T}e^{\beta A_s} ds\right]<+\infty$. Additionally, for any $p \in (1,2)$, note that $\lim\limits_{\varepsilon \downarrow 0} \left(\nu_\varepsilon(y)\right)^{p}=\big|y\big|^p$ and $\lim\limits_{\varepsilon \downarrow 0} \left(\nu_\varepsilon(y)\right)^{p-2}=\big|y\big|^{p-2}\mathds{1}_{\{y \neq 0\}}$. Following this and using Young's inequality, we have
\begin{equation}\label{O5}
	\begin{split}
		&\mathbb{E}\left[\left(\int_{0}^{T}e^{\beta A_s}\int_{E} \big|\widehat{U}_s(e)\big|^2 N(ds,de)\right)^{\frac{p}{2}}\right]\\
		&=\mathbb{E}\left[\left(\int_{0}^{T}e^{\frac{2-p}{2}\beta A_s}\left(\nu_\varepsilon(\big|\widehat{Y}_{s-}\big|\vee \big|\widehat{Y}_{s-}+\widehat{U}_s(e)\big|\big)\right)^{2-p} \right.\right.\\
		&\left.\left. \qquad\qquad\qquad\qquad\qquad \times \left(\nu_\varepsilon(\big|\widehat{Y}_{s-}\big|\vee \big|\widehat{Y}_{s-}+\widehat{U}_s(e)\big|\big)\right)^{p-2}e^{\frac{p}{2} \beta A_s} \int_{E} \big|\widehat{U}_s(e)\big|^2 N(ds,de)\right)^{\frac{p}{2}}\right]\\
		&=\mathbb{E}\left[\left(\int_{0}^{T}\left(e^{\frac{2-p}{4}\beta A_s}\left(\nu_\varepsilon(\big|\widehat{Y}_{s-}\big|\vee \big|\widehat{Y}_{s-}+\widehat{U}_s(e)\big|\big)\right)^{\frac{2-p}{2}}\right)^2\right.\right.\\
		&\left.\left. \qquad\qquad\qquad\qquad\qquad \times \left(\nu_\varepsilon(\big|\widehat{Y}_{s-}\big|\vee \big|\widehat{Y}_{s-}+\widehat{U}_s(e)\big|\big)\right)^{p-2}e^{\frac{p}{2} \beta A_s} \int_{E} \big|\widehat{U}_s(e)\big|^2 N(ds,de)\right)^{\frac{p}{2}}\right]\\
		&=\mathbb{E}\left[\left(\int_{0}^{T}\left(\nu_{\varepsilon_p}(e^{\frac{1}{2}\beta A_s}\big|\widehat{Y}_{s-}\big|\vee \big|\widehat{Y}_{s-}+\widehat{U}_s(e)\big|)\right)^{2-p}\right.\right.\\
		&\left.\left. \qquad\qquad\qquad\qquad\qquad \times \left(\nu_\varepsilon(|\widehat{Y}_{s-}\big|\vee \big|\widehat{Y}_{s-}+\widehat{U}_s(e)\big|)\right)^{p-2}e^{\frac{p}{2} \beta A_s} \int_{E} \big|\widehat{U}_s(e)\big|^2 N(ds,de)\right)^{\frac{p}{2}}\right]\\
		&\leq \mathbb{E}\left[\left(\nu_{\varepsilon_p}\left(\left( e^{\frac{1}{2}\beta A} \widehat{Y}\right)_\ast\right)\right)^{p\frac{2-p}{2}}\left(\int_{0}^{T} e^{\frac{p}{2}\beta A_s}\left(\nu_\varepsilon\big(\big|\widehat{Y}_{s-}\big|\vee \big|\widehat{Y}_{s-}+\widehat{U}_s(e)\big|\big)\right)^{p-2} \int_{E} \big|\widehat{U}_s(e)\big|^2 N(ds,de)\right)^{\frac{p}{2}}\right]\\
		& \leq \frac{2-p}{2}\mathbb{E}\left[\left(\nu_{\varepsilon_p}\left(\left( e^{\frac{1}{2}\beta A} \widehat{Y}\right)_\ast\right)\right)^{p}\right]+\frac{p}{2} \mathbb{E}\left[\int_{0}^{T} e^{\frac{p}{2}\beta A_s}\left(\nu_\varepsilon\big(\big|\widehat{Y}_{s-}\big|\vee \big|\widehat{Y}_{s-}+\widehat{U}_s(e)\big|\big)\right)^{p-2} \int_{E} \big|\widehat{U}_s(e)\big|^2 N(ds,de)\right].
	\end{split}
\end{equation}
Let $\varepsilon$ tend to zero.  We can use a convergence theorem, which is a consequence of the estimations \eqref{Y sup p less than 2} and \eqref{Ito for p<2--v1.3} to derive:
\begin{equation*}
	\begin{split}
		&\mathbb{E}\left[\left(\int_{0}^{T}e^{\beta A_s}\int_{E} \big|\widehat{U}_s(e)\big|^2 N(ds,de)\right)^{\frac{p}{2}}\right]\\
		& \leq \frac{2-p}{2}\mathbb{E}\left[\sup_{t \in [0,T]} e^{\frac{p}{2}\beta A_{t  }}\big|\widehat{Y}_{t}\big|^p\right]\\
		&\qquad+\frac{p}{2} \mathbb{E}\left[\int_{0}^{T}e^{\frac{p}{2}\beta A_s}\int_{E}\big|\widehat{U}_s(e)\big|^2\left(|\widehat{Y}_{s-}|^{2}\vee|\widehat{Y}_{s-}+\widehat{U}_s(e)|^2\right)^{\frac{p-2}{2}}\mathds{1}_{\{|\widehat{Y}_{s-}|\vee|\widehat{Y}_{s-}+\widehat{U}_s(e)|\neq 0\}}N(ds,de)\right]\\
		& \leq  \mathfrak{C}_{p,T} \left(\mathbb{E}\left[ e^{\frac{p}{2}\beta A_{T}}\big|\widehat{\xi}\big|^p\right]  +\mathbb{E}\int_{0}^{T}e^{\beta A_s}\left|\widehat{f}(s,Y^2_s,Z^2_s,U_s^2)\right|^pds\right).
	\end{split}
\end{equation*}
	To conclude the proof of the proposition, it remains to show the estimation for the remaining term 
$$
\mathbb{E}\left[\left(\int_{0}^{T}e^{\beta A_s} \big\|\widehat{Z}_s\big\|^2 ds\right)^{\frac{p}{2}}\right].
$$
To this end, note that if $\widehat{Y}=0$ then after writing the BSDE \eqref{basic BSDE} forwardly, we derive that the continuous martingale part $\left(\int_{0}^{t}\widehat{Z}_s dW_s\right)_{t \leq T}$ have finite variation over $[0,T]$. Since it is predictable, we derive from \cite[Chapter I. Corollary 3.16]{jacodshiryaev2003} that $\int_{0}^{\cdot}\widehat{Z}_s dW_s=0$ up to an evanescent set. Then, using this with Young's inequality and \eqref{Ito for p<2--v1.3}, \eqref{Y sup p less than 2},  we have
\begin{equation*}
	\begin{split}
		\mathbb{E}\left[\left(\int_{0}^{T}e^{\beta A_s} \big\|\widehat{Z}_s\big\|^2 ds\right)^{\frac{p}{2}}\right]
		&=\mathbb{E}\left[\left(\int_{0}^{T}e^{\beta A_s} \big\|\widehat{Z}_s\big\|^2 \mathds{1}_{\{\widehat{Y}_s\neq0\}} ds\right)^{\frac{p}{2}}\right]\\
		& \leq \mathbb{E}\left[\left( \sup_{t \in [0,T]}\left(e^{\frac{p(2-p)}{4}\beta A_s} \big|\widehat{Y}_s\big|^{\frac{p(2-p)}{2}}\right)\right) \left(\int_{0}^{T} \big\|\widehat{Z}_s\big\|^2 \big|\widehat{Y}_s\big|^{p-2} \mathds{1}_{\{\widehat{Y}_s\neq0\}} ds\right)^{\frac{p}{2}}\right]\\
		& \leq \frac{2-p}{2}\mathbb{E}\left[\sup_{t \in [0,T]} e^{\frac{p}{2}\beta A_{t  }}\big|\widehat{Y}_{t}\big|^p\right]+\frac{p}{2} \mathbb{E}\left[\int_{0}^{T}e^{\frac{p}{2}\beta A_s}\big|\widehat{Y}_{s}\big|^{p-2} \big\|\widehat{Z}_s\big\|^2 \mathds{1}_{\{\widehat{Y}_{s}\neq 0\} }ds\right]\\
		& \leq  \mathfrak{C}_{p,T} \left(\mathbb{E}e^{\frac{p}{2}\beta A_{T}}\big|\widehat{\xi}\big|^p +\mathbb{E}\int_{0}^{T}e^{\beta A_s}\left|\widehat{f}(s,Y^2_s,Z^2_s,U_s^2)\right|^pds\right).
	\end{split}
\end{equation*}
Completing the proof.

\end{proof}
\begin{remark}
	We recall that, if one follows the classical argument with the jump component taken in the space $\mathbb{L}^2_Q$, then the application of Lemma \ref{Lemma betap for inequality} leads to the appearance, on the left-hand side of estimate \eqref{Ito for p<2--v1.1}, of the term
	$$
	\mathfrak{N}:=
	\left(
	\mathfrak{b}_p
	\int_{t \wedge \tau}^{\tau}
	e^{\frac{p}{2}\beta A_s+\mu s}
	\int_E
	|\widehat{U}_s(e)|^2
	\big(
	|\widehat{Y}_{s-}|^2
	\vee
	|\widehat{Y}_{s-}+\widehat{U}_s(e)|^2
	\big)^{\frac{p-2}{2}}
	\mathds{1}_{\{|\widehat{Y}_{s-}|\vee|\widehat{Y}_{s-}+\widehat{U}_s(e)|\neq0\}}
	N(ds,de)
	\right)_{t\leq T}.
	$$
	At the same time, the stochastic-Lipschitz dependence of the generator with respect to the jump component gives rise to a term of the form
	$$
	p\int_{t \wedge \tau}^{\tau}
	e^{\frac{p}{2}\beta A_s+\mu s}
	|\widehat{Y}_s|^{p-1}
	\|\widehat{U}_s\|_{\mathbb{L}^2_Q}\delta_s ds,
	$$
	which is obtained through computations similar to those leading to \eqref{O4}. In the case $p\geq2$, this term can be handled by arguments of the same type as those used to obtain \eqref{O5}. However, when $p\in(1,2)$, the counterexample given in \cite[p. 2]{Kruse2017} shows that this strategy fails in general. More precisely, the process
	$$
	\mathfrak{M}:=
	\left(
	\mathfrak{b}_p
	\int_{t \wedge \tau}^{\tau}
	e^{\frac{p}{2}\beta A_s+\mu s}
	\int_E
	|\widehat{U}_s(e)|^2
	\big(
	|\widehat{Y}_{s-}|^2
	\vee
	|\widehat{Y}_{s-}+\widehat{U}_s(e)|^2
	\big)^{\frac{p-2}{2}}
	\mathds{1}_{\{|\widehat{Y}_{s-}|\vee|\widehat{Y}_{s-}+\widehat{U}_s(e)|\neq0\}}
	\widetilde{N}(ds,de)
	\right)_{t\leq T}
	$$
	cannot, in general, be justified as a local martingale under the sole $\mathbb{L}^2_Q$-framework. This is precisely the property needed in order to pass from $\mathfrak{N}$ to its predictable compensator.
	
	Since this passage is not available in general, the functional space for the jump component has to be modified. This explains the main difficulty compared with the case $p\geq2$. In particular, the a priori estimates for $p\in(1,2)$ cannot be obtained by a direct adaptation of the classical $\mathbb{L}^2_Q$ argument; they require working in the space $\mathbb{L}^1_Q+\mathbb{L}^2_Q$ together with the auxiliary estimates recalled above. The obstruction comes from the fact that, for $p<2$, the dual predictable projection of the martingale associated with the stochastic integral with respect to $\widetilde{N}$ is not controlled by the corresponding quadratic projection, as already discussed in Remark \ref{rmq p}.
\end{remark}
\section{Existence and uniqueness of $\mathbb{L}^p$-solutions for $p\in(1,+\infty)$}
\label{sec4}

Recall that, if $p\in(1,2)$, then $\mathscr{L}^p_Q=\mathbb{L}^1_Q+\mathbb{L}^2_Q$, whereas if $p\geq2$, then $\mathscr{L}^p_Q=\mathbb{L}^2_Q$.

\subsection{$\mathbb{L}^p$-solutions for $p \in [2,+\infty)$}
From Propositions \ref{Propo p=2} and \ref{Propo p sup a 2}, we can easily deduce the following results:
\begin{corollary}\label{Uniquenss p geq 2}
	Let $p \geq 2$. Under assumptions \textsc{(H2)}--\textsc{(H6)} on the data $(\xi, f)$, the BSDE \eqref{basic BSDE} associated with $(\xi,f)$ admits at most one $\mathbb{L}^p$-solution.
\end{corollary}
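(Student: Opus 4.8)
The plan is to read off uniqueness directly from the comparison-type a priori estimates of Propositions \ref{Propo p=2} and \ref{Propo p sup a 2}, exploiting the fact that when two solutions share the same data the right-hand sides of those estimates collapse to zero. Concretely, I would let $(Y^1,Z^1,U^1)$ and $(Y^2,Z^2,U^2)$ be two $\mathbb{L}^p$-solutions of \eqref{basic BSDE} associated with the \emph{same} pair $(\xi,f)$, and adopt the difference notation $\widehat{\mathcal R}=\mathcal R^1-\mathcal R^2$ fixed just before those propositions. The first observation is that $\widehat\xi=\xi-\xi=0$ and, since $f_1=f_2=f$, the driver increment evaluated along the second solution vanishes pointwise: $\widehat f(s,Y^2_s,Z^2_s,U^2_s)=f(s,Y^2_s,Z^2_s,U^2_s)-f(s,Y^2_s,Z^2_s,U^2_s)=0$, $d\mathbb{P}\otimes ds$-a.e.

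Next I would split according to the value of $p$. For $p=2$ I invoke Proposition \ref{Propo p=2}; for $p\in(2,+\infty)$ I invoke Proposition \ref{Propo p sup a 2}, using its first inequality \eqref{Verified} to bound the $Y$-part and then its second inequality (valid for every $\beta>0$, since \eqref{Verified} holds for every $\beta>0$) to bound the $(Z,U)$-part. In either case the right-hand side is a positive multiple of $\mathbb{E}\big[e^{c\beta A_T}|\widehat\xi|^p\big]+\mathbb{E}\int_0^T e^{c\beta A_s}\big|\widehat f(s,Y^2_s,Z^2_s,U^2_s)\big|^p ds$ with $c\in\{1,p-1\}$, which we have just shown to equal $0$.

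Consequently every nonnegative term on the left-hand side must vanish. In particular $\mathbb{E}\big[\sup_{t\le T}e^{\beta A_t}|\widehat Y_t|^p\big]=0$, so $\widehat Y\equiv 0$ up to an evanescent set; since $Y^1$ and $Y^2$ are RCLL this yields $Y^1=Y^2$ indistinguishably. Likewise $\|\widehat Z\|_{\mathcal H^p_\beta}=0$ and $\|\widehat U\|_{\mathfrak L^p_\beta}=0$, whence $Z^1=Z^2$ in $\mathcal H^p_\beta$ and $U^1=U^2$ in $\mathfrak L^p_\beta$. Thus $(Y^1,Z^1,U^1)=(Y^2,Z^2,U^2)$ in $\mathcal E^p_\beta(0,T)$, proving that \eqref{basic BSDE} has at most one $\mathbb{L}^p$-solution.

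I do not expect a genuine obstacle here, as the statement is a formal corollary of the a priori estimates; the only points deserving a line of care are (i) verifying that hypotheses \textsc{(H1)}--\textsc{(H5)} are exactly the standing assumptions under which Propositions \ref{Propo p=2} and \ref{Propo p sup a 2} were established (in particular the normalization $\alpha_t+\varepsilon a_t^2\le 0$ adopted after Remark \ref{rmq essential}), and (ii) justifying the passage from ``$\mathbb{L}^p$-norm of a difference equals zero'' to genuine indistinguishability and $d\mathbb{P}\otimes ds$-a.e. equality of the triplet, which is immediate from the definitions of the norms on $\mathcal S^p_\beta$, $\mathcal H^p_\beta$ and $\mathfrak L^p_\beta$.
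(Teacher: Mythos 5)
Your proposal is correct and is exactly the argument the paper intends: the corollary is stated as an immediate consequence of Propositions \ref{Propo p=2} and \ref{Propo p sup a 2} (together with Remark \ref{Rmq p=2}), obtained by taking $\xi_1=\xi_2$ and $f_1=f_2$ so that $\widehat\xi=0$ and $\widehat f(s,Y^2_s,Z^2_s,U^2_s)=0$, forcing the left-hand sides of the estimates to vanish. Your two points of care (the standing normalization from Remark \ref{rmq essential} and the passage from zero norm to indistinguishability) are exactly the right ones and are handled correctly.
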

\begin{corollary}\label{Integrability property}
	Let $p \geq 2$, $(\xi, f)$ be a pair of data satisfying assumptions \textsc{(H2)}--\textsc{(H6)}, and $(Y, Z, U)$ an $\mathbb{L}^p$-solution of the BSDE \eqref{basic BSDE} associated with $(\xi, f)$. Then,
	\begin{itemize}
		\item For any ${\beta} >0$, there exists a constants $\mathfrak{c}_{p,\epsilon,T}$ such that  
		\begin{equation}
			\begin{split}
				&\mathbb{E}\left[\sup_{t \in [0,T]}e^{{\beta} A_t}\big|{Y}_{t}\big|^{p}\right]+\mathbb{E}\int_{0}^{T} e^{\beta A_s} \big|{Y}_s\big|^p dA_s
				\leq \mathfrak{c}_{p,\epsilon,T} \left(\mathbb{E}\left[ e^{{\beta} A_T} \big|{\xi}\big|^p\right]  + \mathbb{E}\int_{0}^{T}e^{{\beta} A_s} \left| \varphi_s\right|^p ds\right).
			\end{split}
		\label{Sofii}
		\end{equation}
		
		\item For any $\beta >0$ such that \eqref{Sofii} holds, we have
		\begin{equation*}
			\begin{split}
				&\mathbb{E}\left[\left(\int_{0}^{T}e^{\beta A_s}\big\|{Z}_s\big\|^2ds\right)^{\frac{p}{2}}\right]+\mathbb{E}\left[\left(\int_{0}^{T}e^{\beta A_s} \big\|{U}_s\big\|^2_{\mathbb{L}^2_Q} ds\right)^{\frac{p}{2}}\right]+\mathbb{E}\left[\left(\int_{0}^{T}e^{\beta A_s} \int_{E}\big|{U}_s(e)\big|^2N(ds,de)\right)^{\frac{p}{2}}\right]\\
				&\leq \mathfrak{C}_{p,\epsilon,T}  \left(\mathbb{E}\left[ e^{(p-1){\beta} A_T} \big|{\xi}\big|^p\right]  + \mathbb{E}\int_{0}^{T}e^{(p-1){\beta} A_s} \left| \varphi_s\right|^p ds\right).
			\end{split}
		\end{equation*}
	\end{itemize}
\end{corollary}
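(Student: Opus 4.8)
The plan is to obtain both estimates as a direct specialization of the a priori bounds already established in Propositions \ref{Propo p=2} and \ref{Propo p sup a 2}, by comparing the given solution with the trivial one. Concretely, I would apply these propositions with the choice $(Y^1,Z^1,U^1)=(Y,Z,U)$, $(\xi_1,f_1)=(\xi,f)$ on the one hand, and $(Y^2,Z^2,U^2)=(0,0,0)$, $(\xi_2,f_2)=(0,0)$ on the other. The first point to record is that $(0,0,0)$ is indeed an $\mathbb{L}^p$-solution of \eqref{basic BSDE} associated with the null data $(0,0)$: it trivially satisfies the equation and all its norms in $\mathcal{E}^p_\beta(0,T)$ vanish. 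Note also that the a priori estimates of Proposition \ref{Propo p sup a 2} only require the generator $f_1=f$ of the first equation to satisfy \textsc{(H2)}--\textsc{(H5)}, since the second generator enters exclusively through the forcing term $\widehat{f}(s,Y^2_s,Z^2_s,U^2_s)$; hence no structural hypothesis on $f_2=0$ is needed.

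With this choice, $\widehat{\mathcal{R}}=\mathcal{R}^1-\mathcal{R}^2=\mathcal{R}$ for $\mathcal{R}\in\{Y,Z,U,\xi\}$, and the forcing term reduces to
\[
\widehat{f}(s,Y^2_s,Z^2_s,U^2_s)=f_1(s,0,0,0)-f_2(s,0,0,0)=f(s,0,0,0).
\]
Plugging this into Proposition \ref{Propo p sup a 2} for $p>2$ (respectively Proposition \ref{Propo p=2} together with Remark \ref{Rmq p=2} for $p=2$, where $p-1=1$ makes the two weight exponents coincide) yields precisely the two stated inequalities, but with $\left|f(s,0,0,0)\right|^p$ in place of $\left|\varphi_s\right|^p$ on the right-hand side.

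The final step is to replace $\left|f(s,0,0,0)\right|$ by $\varphi_s$ using the stochastic linear growth condition \textsc{(H4)}: taking $y=0$ gives $\left|f(s,0,0,0)\right|\le\varphi_s$ pointwise, whence $\left|f(s,0,0,0)\right|^p\le\left|\varphi_s\right|^p$, and monotonicity of the expectation transfers this bound through every occurrence of the forcing term in both estimates. To guarantee that the right-hand sides are finite, so that the bounds are meaningful, I would invoke \textsc{(H1)} and \textsc{(H4)}: since $p\ge 2$ one has $(p-1)\vee\tfrac{p}{2}=p-1$ and $(p-1)\vee1=p-1$, so the integrability assumed on $\xi$ and on $\varphi$ simultaneously controls both the weight $\beta$ appearing in the first estimate and the weight $(p-1)\beta$ appearing in the second.

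There is essentially no analytic obstacle here, as the heavy lifting has already been done in Propositions \ref{Propo p=2} and \ref{Propo p sup a 2}; the only point requiring genuine care is the bookkeeping of the exponents. One must verify that the weights $\beta$ and $(p-1)\beta$ in the two inequalities are both dominated by the integrability exponents furnished by \textsc{(H1)} and \textsc{(H4)} in the regime $p\ge2$, which is exactly the content of the two identities $(p-1)\vee\tfrac{p}{2}=p-1$ and $(p-1)\vee1=p-1$.
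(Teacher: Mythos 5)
Your proposal is correct and follows exactly the route the paper intends (the corollary is introduced with "From Propositions \ref{Propo p=2} and \ref{Propo p sup a 2}, along with Remark \ref{Rmq p=2}, we can easily deduce the following results"): specialize the two-solution a priori estimates to the comparison with the zero solution of the null equation, so that the forcing term becomes $f(s,0,0,0)$, and then bound $|f(s,0,0,0)|\le\varphi_s$ via \textsc{(H4)}. Your bookkeeping of the exponents, using $(p-1)\vee\tfrac{p}{2}=p-1$ and $(p-1)\vee 1=p-1$ for $p\ge 2$ to ensure finiteness of the right-hand sides, is also the correct and complete justification.
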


Now, we can state the main result of the current section.
\begin{theorem}\label{thm1}
	Suppose that \textsc{(H1)}--\textsc{(H6)} hold. Then the BSDE \eqref{basic BSDE} has a unique $\mathbb{L}^p$-solution for $p \geq 2$.
\end{theorem}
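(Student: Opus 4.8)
Uniqueness is already contained in Corollary~\ref{Uniquenss p geq 2}, so I would only need to produce a solution. By Remark~\ref{rmq essential} I would assume throughout that $\alpha_t+\varepsilon a_t^2\le 0$, the sign condition under which the monotone term was absorbed in the a priori estimates. The overall strategy is to first solve the equation when the driver is \emph{stochastically Lipschitz in $y$} (as well as in $(z,u)$), and then to reach the genuinely monotone driver of \textsc{(H2)} by a Lipschitz regularization in the $y$-variable, passing to the limit with the help of the a priori estimates of Propositions~\ref{Propo p=2} and~\ref{Propo p sup a 2}, whose constants do \emph{not} depend on the Lipschitz constant in $y$.

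\textbf{Step 1 (stochastically Lipschitz drivers).} Suppose, in addition to \textsc{(H3)}--\textsc{(H6)}, that $|f(t,y,z,u)-f(t,y',z,u)|\le \ell_t|y-y'|$ for some progressively measurable $\ell$. On the Banach space $\mathcal{E}^p_\beta(0,T)$ I would introduce the map $\Phi$ sending $(y,z,u)$ to $(Y,Z,U)$, where $(Y,Z,U)$ solves \eqref{basic BSDE} driven by the frozen process $F_s:=f(s,y_s,z_s,u_s)$: existence of $(Y,Z,U)$ follows from the predictable representation Theorem~\ref{representation thm} applied to $\mathbb{E}\big[\xi+\int_0^T F_s\,ds\mid\mathcal{F}_t\big]$, and its membership in $\mathcal{E}^p_\beta(0,T)$ from Propositions~\ref{Propo p=2} and~\ref{Propo p sup a 2} used with null generator. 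Comparing two images and invoking these same estimates together with the Lipschitz bounds on $\widehat F=F^1-F^2$ controls $\|\Phi(y^1,z^1,u^1)-\Phi(y^2,z^2,u^2)\|_{\mathcal{E}^p_\beta}$ by the differences $(\hat y,\hat z,\hat u)$ weighted against $dA_s=\zeta_s^2\,ds$; since all Lipschitz coefficients are dominated by $a_s$ and hence by the density of $A$, taking $\beta$ large makes $\Phi$ a contraction, and the Banach fixed-point theorem provides the unique solution for this class of drivers.

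\textbf{Step 2 (monotone drivers).} For the driver of the theorem I would regularize the $y$-dependence, writing $g(s,y):=f(s,y,0,0)$ — monotone, continuous and of linear growth by \textsc{(H2)}, \textsc{(H6)}, \textsc{(H4)} — and introducing Lipschitz approximations $g_n$ (an inf/sup-convolution, i.e.\ a Yosida-type regularization) that preserve the monotonicity process $\alpha_t$ and the growth functions $(\varphi_t,\phi_t)$ and satisfy $g_n(s,y)\to g(s,y)$ pointwise. Setting $f_n(s,y,z,u):=g_n(s,y)+\big[f(s,y,z,u)-f(s,y,0,0)\big]$, each $f_n$ is stochastically Lipschitz in $y$ and satisfies \textsc{(H1)}--\textsc{(H6)} with the same $(\alpha,\eta,\delta,\varphi,\phi)$, so Step 1 furnishes a solution $(Y^n,Z^n,U^n)$. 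Corollary~\ref{Integrability property} bounds these uniformly in $n$ in $\mathcal{E}^p_\beta(0,T)$, and applying Proposition~\ref{Propo p sup a 2} (resp.\ Proposition~\ref{Propo p=2} when $p=2$) to the pair $(f_n,f_m)$ — whose driver gap, evaluated along the common path $Y^m$, is exactly $g_n(s,Y^m_s)-g_m(s,Y^m_s)$ and therefore does not involve any $y$-Lipschitz constant — shows that $(Y^n,Z^n,U^n)$ is Cauchy in $\mathcal{E}^p_\beta(0,T)$, once one checks that $\mathbb{E}\int_0^T e^{(p-1)\beta A_s}|g_n(s,Y^m_s)-g_m(s,Y^m_s)|^p\,ds\to 0$. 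Passing to the limit in \eqref{basic BSDE} and identifying the limit of $f_n(s,Y^n_s,Z^n_s,U^n_s)$ with $f(s,Y_s,Z_s,U_s)$ via \textsc{(H6)} and the $(z,u)$-Lipschitz bound \textsc{(H3)} yields the desired $\mathbb{L}^p$-solution.

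\textbf{Main obstacle.} The delicate point is Step 2. The regularization $g_n$ must be built so that it simultaneously keeps the monotonicity, the linear-growth functions, and the $\mathbb{L}^p$-integrability \emph{with respect to the unbounded stochastic weights}, and the convergence $g_n(s,Y^m_s)-g_m(s,Y^m_s)\to0$ must be promoted from a pointwise statement to convergence of the weighted $\mathbb{L}^p$-integral above. Since \textsc{(H2)} controls only an inner product and not $|f(s,y)-f(s,y')|$, this convergence cannot be deduced from $Y^n\to Y$ directly; it requires combining the uniform a priori bounds of Corollary~\ref{Integrability property}, the domination $|g_n(s,y)|\lesssim \varphi_s+\phi_s|y|$, and a dominated-convergence argument, possibly after a localization/truncation to handle the unboundedness of the coefficients.
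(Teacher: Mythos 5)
Your proposal is a genuinely different route from the paper's. The paper does not regularize the $y$-dependence at all: it localizes in \emph{time}, setting $\tau_n=\inf\{t: a_t\geq n\}\wedge T$ and $f_n=\mathds{1}_{\{t\leq\tau_n\}}f$, so that on $[0,\tau_n]$ the driver is Lipschitz in $(z,u)$ with the \emph{constant} $n$ while remaining merely monotone and continuous in $y$; existence for each $f_n$ is then imported wholesale from Theorem~1 and Proposition~2 of \cite{Kruse2015}, and the Cauchy property follows because the driver gap $f_n-f_m$ is supported on the shrinking set $]\tau_n,\tau_m]$. You instead keep the unbounded stochastic coefficients throughout and attack the monotonicity directly by a Yosida regularization in $y$, rebuilding the Lipschitz existence theory via Picard iteration in the weighted spaces. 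What the paper's route buys is that the only limit to be controlled is $\mathds{1}_{]\tau_n,\tau_m]}$ times a quantity already bounded by the uniform estimates; what your route would buy is self-containedness (no external monotone-driver existence theorem) at the price of carrying the regularization error $g_n(s,Y^m_s)-g_m(s,Y^m_s)$ through the weighted norms.

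Two points in your plan need more than you give them. First, in Step 1 you ``take $\beta$ large'' to get the contraction, but $\beta$ is fixed once and for all by the integrability hypotheses \textsc{(H1)} and \textsc{(H4)}; enlarging it would require integrability of the data that is not assumed. The stochastic coefficients $\eta_s,\delta_s$ can be absorbed because they sit inside $dA_s=a_s^2\,ds$, but the Lipschitz constant of the Yosida approximation $g_n$ is the deterministic number $n$, which is not part of $a_s^2$; it must be handled by the auxiliary deterministic weight $e^{\mu t}$ (as in Remark~\ref{rmq essential} and the proofs of the a priori estimates), with $\mu=\mu(n)$ large, not by $\beta$. Second, and more seriously, the convergence $\mathbb{E}\int_0^Te^{(p-1)\beta A_s}|g_n(s,Y^m_s)-g_m(s,Y^m_s)|^p\,ds\to0$ is not a consequence of pointwise convergence of $g_n$, since the argument $Y^m_s$ moves with $m$; you need locally uniform convergence of $g_n$ (available for the Yosida approximation on compacts) together with equi-integrability, and the natural dominating function $2^p(\varphi_s+\phi_s|Y^m_s|)^p$ contains $\phi_s^p$, whose weighted integrability is \emph{not} among the hypotheses --- only $\phi_s\leq a_s^2=dA_s/ds$ is available, which controls $\phi_s\,ds$ but not $\phi_s^p\,ds$ for $p>2$. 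The paper sidesteps exactly this by establishing the Cauchy property in $\mathcal{E}^2_\beta(0,T)$ (dividing the squared driver gap by $a_s^2$ so that $\phi_s^2/a_s^2\leq\phi_s$ feeds into $dA_s$) and only upgrading the limit to $\mathcal{E}^p_\beta(0,T)$ at the very end via the uniform bounds of Corollary~\ref{Integrability property} and Fatou's lemma; you would need to adopt the same two-tier scheme rather than running the Cauchy argument directly at exponent $p$. With those two repairs your architecture would go through, but as written these are genuine gaps rather than routine omissions.
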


\begin{proof}
	For any $\omega \in \Omega$, $n \geq 1$, define\footnote{This is exactly the sequence of stopping times defined in Remark \ref{SPR} and given by \eqref{kech}.}
	$$
	\tau_n(\omega):=\inf\left\{t \geq 0 : a_t(\omega) \geq n \right\} \wedge T.
	$$
	As $(a_t)_{t \leq T}$ is a progressively measurable process and $\mathbb{F}$ satisfies the usual conditions of right-continuity and completeness, the Début theorem \cite[Chapter IV. Theorem 50]{DellacherieMeyer1975} infers that the random variables $\left\{\tau_n\right\}_{n \geq 1}$ is an increasing sequence of stopping times such that $\tau_n \nearrow T$ as $n\rightarrow+\infty$.\\
	Next, for each $n \geq 1$ and all $(\omega,t,y,z,u) \in \Omega \times [0,T] \times \mathbb{R}^d \times \mathbb{R}^{d \times k}\times \mathbb{L}^2_Q$, we set
	$$
	f_n(t,y,z,u):=\mathds{1}_{\{t \leq \tau_n(\omega)\}}f(\omega,t,y,z,u).
	$$
	Since $\mathds{1}_{\llbracket 0,\tau_n\rrbracket}$ is predictable, we derive that the process $(\omega,t) \mapsto f_n(\omega,t,y,z,u)$ is progressively measurable.\\ 
	Let $n \geq 1$ be fixed. It is clear that the Lebesgue integrator measure $dt$ for the coefficient $f$ does not jump, then the dependence on $\mathds{1}_{\{t \leq \tau_n\}}$ or $\mathds{1}_{\{t < \tau_n\}}$
	does not matter as $\mathds{1}_{\llbracket 0,\tau_n \rrbracket}$ has one jump time and thus of $dt$-measure zero. Therefore, then for $d\mathbb{P} \otimes dt$-almost every $(\omega,t)$, it holds true that $\mathds{1}_{\{t \leq \tau_n(\omega)\}}=\mathds{1}_{\{t < \tau_n(\omega)\}}$. \\
	Then, for any $n \geq 1$ and all $t \in [0,T]$, $y \in \mathbb{R}^d$, $z,z' \in \mathbb{R}^{d \times k}$, $u, u' \in \mathbb{L}^2_Q$, $d\mathbb{P}\otimes dt$-a.e., we have
	$$
	\big|	f_n(t,y,z,u)-	f_n(t,y,z',u')\big| \leq n\left(\big|z-z'\big|+\big\|u-u'\big\|_{\mathbb{L}^2_Q}\right).
	$$
	Moreover, from \textsc{(H2)} and Remark \ref{rmq essential}, for all $t \in [0,T]$, $y,y' \in \mathbb{R}^d$, $z \in \mathbb{R}^{d \times k}$, $u \in \mathbb{L}^2_Q$, $d\mathbb{P}\otimes dt$-a.e., we have
	\begin{equation}\label{fk mon}
		\left(y-y'\right)\left(f_n(t,y,z,u)-f_n(t,y',z,u)\right) \leq 0
		.
	\end{equation}
	On the other hand, for all $t \in [0,T]$, $y, \in \mathbb{R}^d$, $z,z' \in \mathbb{R}^{d \times k}$, $u \in \mathbb{L}^2_Q$, $d\mathbb{P}\otimes dt$-a.e., we have $\left|f_n(t,y,0,0)\right| \leq \left|f(t,y,0,0)\right| \leq \varphi_t+\phi_t |y|$ and the function $y \mapsto f_n(t,y,z,u)$ is continuous by assumption (H1).\\
	Then, for $p \geq 2$, we have 
	$$
	\mathbb{E}\left[\int_{0}^{T}\big|f_n(s,0,0,0)\big|^p ds\right] \leq \mathbb{E}\left[\int_{0}^{T}\big|\varphi_s\big|^p ds\right] < +\infty,
	$$
	and for any $\mathsf{r} > 0$, we have
	\begin{equation*}
		\begin{split}
			\sup_{|y| \leq \mathsf{r}} \big|f_n(t,y,0,0) - f_n(t,0,0,0)\big|&=\sup_{|y| \leq \mathsf{r}} \mathds{1}_{\{t \leq \tau_n \}} \big|f(t,y,0,0) - f(t,0,0,0)\big|\\
			& \leq  \mathds{1}_{\{t \leq \tau_n \}} \left(\sup_{|y| \leq \mathsf{r}} \big|f_n(t,y,0,0)\big| + \big|{f}_n(t,0,0,0)\big|\right)\\
			& \leq   \left(2\varphi_t + \mathsf{r} \phi_t\right)\mathds{1}_{\{t \leq \tau_n \}}.
		\end{split}
	\end{equation*}
From the definition of the stopping time $\tau_n$ and the definition of 
$(a_t)_{t \leq T}$ given in assumption \textsc{(H5)}, we obtain
$$
\mathbb{E}\left[\int_{0}^{\tau_n}\phi_s ds\right]
\leq n^2T<+\infty .
$$
Moreover, since the exponential weight is greater than or equal to one, assumption 
\textsc{(H6)} implies
$$
\mathbb{E}\int_0^T |\varphi_s|^p ds<+\infty.
$$
Hence, by Hölder's inequality on $\Omega\times[0,T]$, we have
$$
\mathbb{E}\left[\int_{0}^{\tau_n}\varphi_s ds\right]
\leq
\mathbb{E}\left[\int_{0}^{T}\varphi_s ds\right]
\leq
T^{\frac{p-1}{p}}
\mathbb{E}\left[
\left(\int_{0}^{T}|\varphi_s|^pds\right)^{\frac1p}
\right]
\leq
T^{\frac{p-1}{p}}
\left(
\mathbb{E}\int_{0}^{T}|\varphi_s|^pds
\right)^{\frac1p}
<+\infty .
$$
	Thus, for every $\mathsf{r} > 0$, we obtain
	$$
	\sup_{|y| \leq \mathsf{r}} \big|f_n(\cdot, y, 0, 0) - f_n(\cdot, 0, 0, 0)\big| \in \mathbb{L}^1\left(\Omega \times [0,T], d\mathbb{P} \otimes dt\right).
	$$
	Finally, note that from \textsc{(H6)}, we have
	$$
	\mathbb{E}\left[\left|\xi\right|^p\right] \leq \mathbb{E}\left[e^{\beta A_T}\left|\xi\right|^p\right] < +\infty.
	$$
	Therefore, the data $\left(\xi, f_n\right)$ satisfy the assumptions of Theorem 1 and Proposition 2 in \cite{Kruse2015}. Moreover, Theorem \ref{representation thm} ensures that the martingale representation property holds for the Brownian motion $W$ and the integer-valued random measure $N$. As a result, on each interval $[0, \tau_n]$, the framework and conditions of Propositions 1--2 and Theorem 1 in \cite{Kruse2015} are fulfilled. Consequently, for any $n \geq 1$, there exists a unique triplet $(Y^n, Z^n, U^n) = (Y^n_t, Z^n_t, U^n_t)_{t \leq T}$ such that
	\begin{equation}\label{Approx BSDE k p geq 2}
		Y^n_t = \xi + \int_{t}^{T} f_n(s, Y^n_s, Z^n_s, U^n_s) ds - \int_{t}^{T} Z^n_s dW_s - \int_{t}^{T} \int_{E} U^n_s(e) \tilde{N}(ds,de).
	\end{equation}
   Moreover, it is clear that the newly introduced driver $f_n$ satisfies the assumptions of Proposition \ref{Propo p sup a 2}, where the resulting estimation for the driver remains independent of $n \geq 1$. Indeed, using assumptions (H2)--(H3) and Remark \ref{rmq essential}, we obtain, for all $t \in [0,T]$, $y, y' \in \mathbb{R}^d$, $z, z' \in \mathbb{R}^{d \times k}$, and $u, u' \in \mathbb{L}^2_Q$, that $d\mathbb{P} \otimes dt$-a.e.,
   \begin{equation}\label{fk prp}
   	\left\lbrace 
   	\begin{split}
   		& \left(y-y'\right)\left(f_n(t,y,z,u)-f_n(t,y',z,u)\right) \leq \mathds{1}_{\{t \leq \tau_n\}} \alpha_t \left|y-y'\right|^2, \\  
   		& \left| f_n(t,y,z,u)-f_n(t,y,z',u')\right|  \leq \mathds{1}_{\{t \leq \tau_n\}} \left(\eta_t \left\|z-z'\right\|+\delta_t \left\|u-u'\right\|_{\mathbb{L}^2_Q}\right) \leq \eta_t \left\|z-z'\right\|+\delta_t \left\|u-u'\right\|_{\mathbb{L}^2_Q},\\  
   		& \mathds{1}_{\{t \leq \tau_n\}}\left(\alpha_t +\varepsilon  a_t^2\right) \leq 0, \quad \forall t \in [0,T], \text{ and for any } \varepsilon \geq 0.
   	\end{split}
   	\right. 
   \end{equation}
	Consequently, we deduce from Corollary \ref{Integrability property} that, for any $\beta > 0$ and $n \geq 1$, there exists a constant $\mathfrak{C}_{p,\epsilon,T}$ (independent of $n$) such that
	\begin{equation}\label{UI for p=2}
		\begin{split}
			&\mathbb{E}\left[\sup_{t \in [0,T]}e^{{\beta} A_t}\big|{Y}^n_{t}\big|^{p}\right] + \mathbb{E}\int_{0}^{T} e^{\beta A_s} \big|{Y}^n_s\big|^p dA_s + \mathbb{E}\left[\left(\int_{0}^{T}e^{\beta A_s}\big\|{Z}^n_s\big\|^2ds\right)^{\frac{p}{2}}\right] \\
			&+ \mathbb{E}\left[\left(\int_{0}^{T}e^{\beta A_s} \big\|{U}^n_s\big\|^2_{\mathbb{L}^2_Q} ds\right)^{\frac{p}{2}}\right]+\mathbb{E}\left[\left(\int_{0}^{T}e^{\beta A_s} \int_{E}\big|{U}^n_s(e)\big|^2N(ds,de)\right)^{\frac{p}{2}}\right] \\
			&\leq \mathfrak{C}_{p,\epsilon,T} \left(\mathbb{E}\left[ e^{(p-1){\beta} A_T} \big|{\xi}\big|^p\right]  + \mathbb{E}\int_{0}^{T}e^{(p-1){\beta} A_s} \left| \varphi_s\right|^p ds\right).
		\end{split}
	\end{equation}
	Here, no boundedness assumption on $A_T$ is imposed. The argument is justified by localization. More precisely, one first applies the arguments of Corollary \ref{Integrability property} on the localized intervals associated with the sequence $\{\sigma_k\}_{k\geq1}$ defined in Remark \ref{rmq p}, for which the stopped increasing process is bounded. This gives the desired estimates locally. Then, by passing to the limit as $k\to+\infty$ and using Fatou's lemma, we obtain \eqref{UI for p=2} on the whole interval $[0,T]$. Next, we aim to show that the approximating BSDE \eqref{Approx BSDE k p geq 2} converges toward the solution $(Y, Z, U)$ of the BSDE \eqref{basic BSDE} in the space $\mathcal{E}^2_\beta(0, T)$. Then, we demonstrate that the limiting process belongs to $\mathcal{E}^p_\beta(0, T)$, so that the limiting process of the BSDE \eqref{Approx BSDE k p geq 2} is the unique $\mathbb{L}^p$-solution of the BSDE \eqref{basic BSDE}.\\
	Let us fix some integers $n, m \geq 1$ such that $n \leq m$. Let $(Y^n, Z^n, U^n)$ and $(Y^m, Z^m, U^m)$ be two $\mathbb{L}^2$-solutions of the BSDE \eqref{Approx BSDE k p geq 2} associated with the parameters $(\xi, f_n)$ and $(\xi, f_m)$, respectively. Define $\widehat{\mathcal{R}}^{n,m} = \mathcal{R}^n - \mathcal{R}^m$, where $\mathcal{R} \in \{Y, Z, U\}$. By applying It\^o's formula, we have
	\begin{equation*}
		\begin{split}
			&e^{\beta A_t} \big|\widehat{Y}^{n,m}_t\big|^2+\beta \int_{t}^{T}e^{\beta A_s} \big|\widehat{Y}^{n,m}_s\big|^2 dA_s+\int_{t}^{T}e^{\beta A_s }\big\|\widehat{Z}^{n,m}_s\big\|^2ds+\int_{t}^{T}e^{\beta A_s} \big\|\widehat{U}^{n,m}_s\big\|^2_{\mathbb{L}^2_Q}ds\\
			&=2\int_{t}^{T}e^{\beta A_s}   \widehat{Y}^{n,m}_s \left(f_{n}(s,Y^{n}_s,Z^n_s,U^n_s)-f_m(s,Y^m_s,Z^m_s,U^m_s)\right) ds\\
			&-2\int_{t}^{T}e^{\beta A_s} \widehat{Y}^{n,m}_{s-}  \widehat{Z}^{n,m}_s dW_s -\int_{t}^{T}e^{\beta A_s} \int_{E}\big(\big|\widehat{Y}^{n,m}_{s-}  - \widehat{U}^{n,m}_s(e)\big|^2-\big|\widehat{Y}^{n,m}_{s-}\big|^2\big) \tilde{N}(ds,de) .
		\end{split}
	\end{equation*} 
	Using \eqref{fk mon} and \eqref{fk prp}, we derive
	\begin{equation}\label{Generator p=2}
		 \resizebox{\textwidth}{!}{$
		\begin{split}
			&2\widehat{Y}^{n,m}_s \left(f_{n}(s,Y^{n}_s,Z^n_s,U^n_s)-f_m(s,Y^m_s,Z^m_s,U^m_s)\right)ds\\ 
			&\leq \left(2\eta_s \big|\widehat{Y}_s\big| \big\|\widehat{Z}_s\|+2\delta_s \big|\widehat{Y}^{n,m}_s\big| \big\|\widehat{U}^{n,m}_s\big\|_{\mathbb{L}^2_Q}+2 \big|\widehat{Y}^{n,m}_s\big| \big|{f}_n\left(s,Y^m_s,Z^m_s,U^m_s\right)-{f}_m\left(s,Y^m_s,Z^m_s,U^m_s\right)|\right)ds\\
			& \leq 3 \big|\widehat{Y}^{n,m}_s\big|^2 dA_s+\frac{1}{2}\left(\big\|\widehat{Z}^{n,m}_s\|^2+\big\|\widehat{U}^{n,m}_s\big\|^2_{\mathbb{L}^2_Q}\right)ds+\frac{\big|f_n\left(s,Y^m_s,Z^m_s,U^m_s\right)-f_m\left(s,Y^m_s,Z^m_s,U^m_s\right)\big|^2}{a^2_s} ds.
		\end{split}
	$}
	\end{equation}
	Henceforth, we have
	\begin{equation*}
		\begin{split}
			&e^{\beta A_t} \big|\widehat{Y}^{n,m}_t\big|^2+(\beta-3) \int_{t}^{T}e^{\beta A_s} \big|\widehat{Y}^{n,m}_s\big|^2 dA_s+\frac{1}{2}\int_{t}^{T}e^{\beta A_s }\big\|\widehat{Z}_s\big\|^2ds+\frac{1}{2}\int_{t}^{T}e^{\beta A_s} \big\|\widehat{U}^{n,m}_s\big\|^2_{\mathbb{L}^2_Q}ds\\
			&\leq \int_{t}^{T}e^{\beta A_s }\frac{\big|f_n\left(s,Y^m_s,Z^m_s,U^m_s\right)-f_m\left(s,Y^m_s,Z^m_s,U^m_s\right)\big|^2}{a^2_s} ds
			+2\int_{t}^{T}e^{\beta A_s} \widehat{Y}^{n,m}_{s-}  \widehat{Z}^{n,m}_s dW_s\\ &\qquad-\int_{t}^{T}e^{\beta A_s} \int_{E}\big(\big|\widehat{Y}_{s-}  - \widehat{U}^{n,m}_s(e)\big|^2-\big|\widehat{Y}^{n,m}_{s-}\big|^2\big) \tilde{N}(ds,de).
		\end{split}
	\end{equation*}
	Using similar reasoning as in Proposition \ref{Propo p sup a 2}, with a straightforward adjustment incorporating \eqref{Generator p=2}, we obtain the following result for some constant $\mathfrak{c}_\beta$ (independent of $n$ and $m$):
	\begin{equation*}\label{Ito p=2--Existence}
		\begin{split}
			&\mathbb{E}\left[\sup_{t \in [0,T]}e^{{\beta} A_t}\big|\widehat{Y}^{n,m}_{t}\big|^{2}\right]+\mathbb{E}\int_{0}^{T} e^{\beta A_s} \big|\widehat{Y}^{n,m}_s\big|^2 dA_s+\mathbb{E}\int_{0}^{T}e^{\beta A_s}\big\|\widehat{Z}^{n,m}_s\big\|^2ds+\mathbb{E}\int_{0}^{T}e^{\beta A_s} \big\|\widehat{U}^{n,m}_s\big\|^2_{\mathbb{L}^2_Q} ds\\
			&\leq \mathfrak{c}_\beta \mathbb{E}\int_{0}^{T}e^{\beta A_s }\frac{\big|f_n\left(s,Y^m_s,Z^m_s,U^m_s\right)-f_m\left(s,Y^m_s,Z^m_s,U^m_s\right)\big|^2}{a^2_s} ds.
		\end{split}
	\end{equation*}
	On the other hand, we have
	\begin{equation*}
		\begin{split}
			\big|f_n\left(s,Y^m_s,Z^m_s,U^m_s\right)-f_m\left(s,Y^m_s,Z^m_s,U^m_s\right)\big|&=\big|\mathds{1}_{[0,\tau_n]}(s)f(s,Y^m_s,Z^m_s,U^m_s)-\mathds{1}_{[0,\tau_m]}f(s,Y^m_s,Z^m_s,U^m_s)\big|\\
			&=\mathds{1}_{]\tau_n,\tau_m]}(s) \big|f(s,Y^m_s,Z^m_s,U^m_s)\big|.
		\end{split}
	\end{equation*}
	Therefore, using \eqref{fk prp}, we have
	\begin{equation*}
		\begin{split}
			&\big|f_n\left(s,Y^m_s,Z^m_s,U^m_s\right)-f_m\left(s,Y^m_s,Z^m_s,U^m_s\right)\big|^2 ds \\
			&\leq 2\mathds{1}_{]\tau_n,\tau_m]}(s) \left(\big|f(s,Y^m_s,Z^m_s,U^m_s)-f(s,Y^m_s,0,0)\big|^2+\big|f(s,Y^m_s,0,0)\big|^2\right)\\
			&\leq4\mathds{1}_{]\tau_k,\tau_m]}(s) \left(\eta^2_s\big\|Z^m_s\big\|^2+\delta^2_s\big\|U^m_s\big\|^2_{\mathbb{L}^2_Q}+\big|\varphi_s\big|^2+\big\|Y^m_s\big\|^2 \phi^2_s \right)ds
		\end{split}
	\end{equation*}
	and then
	\begin{equation*}
		\begin{split}
			&\mathbb{E}\int_{0}^{T}e^{\beta A_s }\frac{\big|f_n\left(s,Y^m_s,Z^m_s,U^m_s\right)-f_m\left(s,Y^m_s,Z^m_s,U^m_s\right)\big|^2}{a^2_s} ds\\
			&\leq \mathbb{E}\int_{\tau_n}^{\tau_m}e^{\beta A_s } \left\{\big\|Y^m_s\big\|^2dA_s+\left(\big\|Z^m_s\big\|^2+\big\|U^m_s\big\|^2_{\mathbb{L}^2_Q}+\left|\frac{\varphi_s}{a_s}\right|^2\right)ds\right\}.
		\end{split}
	\end{equation*}
	Passing to the limit as $n \rightarrow +\infty$, along with the uniform estimation \eqref{UI for p=2} and the dominated convergence theorem, we obtain
	$$
	\lim\limits_{n \rightarrow +\infty} \mathbb{E} \int_{0}^{T} e^{\beta A_s} \frac{\big|f_n\left(s,Y^m_s,Z^m_s,U^m_s\right)-f_m\left(s,Y^m_s,Z^m_s,U^m_s\right)\big|^2}{a^2_s} ds = 0.
	$$
	Thus, the sequence $\left\{\big(Y^n, Z^n, U^n\big)\right\}_{n \geq 1}$ is a Cauchy sequence in $\mathcal{E}^2_\beta(0, T)$. Consequently, there exists a triplet $(Y, Z, U) \in \mathcal{E}^2_\beta(0, T)$ that corresponds to the limit of the sequence $\left\{\big(Y^n, Z^n, U^n\big)\right\}_{n \geq 1}$ in the same space. In other words, we have
	\begin{equation}\label{CV p geq 2}
		\lim\limits_{n \rightarrow+\infty}\left(\big\|Y^n-Y\big\|^2_{\mathcal{S}^2_\beta}+\big\|Y^n-Y\big\|^2_{\mathcal{S}^{2,A}_\beta}+\big\|Z^n-Z\big\|^2_{\mathcal{H}^2_\beta}+\big\|U^n-U\big\|^2_{\mathcal{L}^2_{Q,\beta}}\right)=0.
	\end{equation}
	Finally, it remains to verify that the limiting process $(Y, Z, U)$ satisfies the BSDE \eqref{basic BSDE}. This will be achieved by passing to the limit term by term.\\
	Using the convergence results \eqref{CV p geq 2} for the processes $\left\{(Z^n, U^n)\right\}_{n \geq 1}$ associated with the martingale part of the BSDE \eqref{Approx BSDE k p geq 2}, and applying the BDG inequality, we obtain
	\begin{equation*}
		\left\lbrace 
		\begin{split}
			&\lim\limits_{n \rightarrow+\infty}\mathbb{E}\left[\sup_{0\leq t\leq T} \left|\int_{t}^{T}Z^n_s dW_s-\int_{t}^{T}Z_s dW_s\right|\right] =0.\\
			&\lim\limits_{n \rightarrow+\infty}\mathbb{E}\left[\sup_{0\leq t\leq T} \left|\int_{t}^{T} \int_E U^n_s(e) \tilde{N}(ds,de)-\int_{t}^{T}\int_E U_s(e) \tilde{N}(ds,de)\right|\right]=0.
		\end{split}
		\right. 
	\end{equation*}
	For the coefficient part, we have
	\begin{equation*}
		\begin{split}
			&\mathbb{E}\int_{0}^{T}e^{\beta A_s}\dfrac{\big|f_n(s,Y^n_s,Z^n_s,U^n_s)-f(s,Y_s,Z_s,U_s)\big|^2}{a_s^2} ds\\
			& \leq2 \mathbb{E}\int_{0}^{T}e^{\beta A_s}\dfrac{\big|f_n(s,Y^n_s,Z^n_s,U^n_s)-f_n(s,Y^n_s,Z_s,U_s)\big|^2}{a_s^2} ds+2\mathbb{E}\int_{0}^{T}e^{\beta A_s}\dfrac{\big|f_n(s,Y^n_s,Z_s,U_s)-f(s,Y_s,Z_s,U_s)\big|^2}{a_s^2} ds
		\end{split}
	\end{equation*}
	Using the same argument as above, along with \eqref{CV p geq 2}, we can conclude that
	\begin{equation*}
		\lim\limits_{n \rightarrow +\infty} \mathbb{E}\int_{0}^{T} e^{\beta A_s} \dfrac{\big|f_n(s, Y^n_s, Z^n_s, U^n_s) - f_n(s, Y^n_s, Z_s, U_s)\big|^2}{a_s^2} ds = 0.
	\end{equation*}
	For the second term, using the assumption of continuity of the coefficient $f_n$ in the $y$-variable and the definition of $f_k$, we can deduce that $\lim\limits_{n \rightarrow +\infty} f_n(s, Y^n_s, Z_s, U_s) = f(s, Y_s, Z_s, U_s)$ $d\mathbb{P} \otimes dt$-a.s. \\
	Again, employing \eqref{fk prp}, \eqref{UI for p=2}, and \eqref{CV p geq 2}, along with the Dominated Convergence Theorem, we obtain
	\begin{equation*}
		\lim\limits_{n \rightarrow +\infty} \mathbb{E} \int_{0}^{T} e^{\beta A_s} \dfrac{\big|f_n(s, Y^n_s, Z_s, U_s) - f(s, Y_s, Z_s, U_s)\big|^2}{a_s^2} ds = 0.
	\end{equation*}
	Thus, passing to the limit in \eqref{Approx BSDE k p geq 2}, we get
	$$
	Y_t = \xi + \int_{t}^{T} f(s, Y_s, Z_s, U_s) ds - \int_{t}^{T} Z_s dW_s - \int_{t}^{T} \int_{E} U_s(e) \tilde{N}(ds, de), \quad t \in [0, T].
	$$
	To show that the solution $(Y, Z, U)$ belongs to $\mathcal{E}^p_\beta(0, T)$, we use the convergence results \eqref{CV p geq 2}, which imply that $\big(Y^n-Y\big)_{\ast} \rightarrow 0$, $
	\left(\int_{0}^{\cdot} Z^n_s dW_s-\int_{0}^{\cdot} Z_s dW_s\right)_\ast \rightarrow 0$, and $
	\big(\int_{0}^{\cdot} Z^n_s dW_s-\int_{0}^{\cdot} Z_s dW_s\big)_\ast \rightarrow 0$, and $\big(\int_{0}^{\cdot} \int_{E} U^n_s(e) \tilde{N}(ds, de)-\int_{0}^{\cdot} \int_{E} U_s(e) \tilde{N}(ds, de)\big)_\ast \rightarrow 0$   a.s.	Finally, by passing to a subsequence that converges almost surely (if necessary), and using \eqref{UI for p=2} along with the Lebesgue dominated convergence theorem, we conclude that $(Y, Z, U)$ belongs to $\mathcal{E}^p_\beta(0, T)$ for $p \geq 2$.
\end{proof}

\subsection{$\mathbb{L}^p$-solutions for $p \in (1,2)$}
A direct consequence of Proposition \ref{Estimation p less stricly than 2} is given in the following two corollaries:
\begin{corollary}\label{Coro 1 p less than 2}
	Consider a pair of data $(\xi,f)$ satisfying assumptions \textsc{(H2)}--\textsc{(H6)}. Then the BSDE \eqref{basic BSDE} associated with $(\xi,f)$ has at most one $\mathbb{L}^p$-solution.
\end{corollary}
\begin{corollary}\label{coro 2 p less than 2}
	Let $(Y,Z,U)$ be a solution of the BSDE \eqref{basic BSDE} associated with the data $(\xi,f)$ satisfying the assumptions \textsc{(H2)}--\textsc{(H6)}. Then there exist constants $\beta^\ast_{p,\epsilon}$ and $\mathfrak{C}_{p,T}$ such that, for any $\beta\geq\beta^\ast_{p,\epsilon}$, we have
	\begin{equation*}
		\begin{split}
			&\mathbb{E}\left[\sup_{t \in [0,T]} e^{\frac{p}{2}\beta A_{t  }}\big|{Y}_{t}\big|^p \right]+\mathbb{E}\left[\left(\int_{0}^{T}e^{\beta A_s} \big\|{Z}_s\big\|^2 ds\right)^{\frac{p}{2}}\right]
			+\mathbb{E}\left[\left(\int_{0}^{T}e^{\beta A_s}\int_{E} \big|{U}_s(e)\big|^2 N(ds,de)\right)^{\frac{p}{2}}\right]\\
			&\leq  \mathfrak{C}_{p,T} \left(\mathbb{E}\left[ e^{\frac{p}{2}\beta A_{T}}\big|{\xi}\big|^p \right] +\mathbb{E}\int_{0}^{T}e^{\beta A_s}\left|\varphi_s\right|^pds\right).
		\end{split}
	\end{equation*}
\end{corollary}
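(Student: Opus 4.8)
The plan is to deduce this estimate directly from Proposition \ref{Estimation p less stricly than 2} by comparing the given solution against the trivial one. Concretely, I would take $(Y^2,Z^2,U^2)=(0,0,0)$, which is the unique $\mathbb{L}^p$-solution of the BSDE \eqref{basic BSDE} associated with the null data $(\xi_2,f_2)=(0,0)$; indeed, the null generator $f_2\equiv 0$ trivially satisfies \textsc{(H1)}--\textsc{(H5)} (with any admissible choice of the stochastic parameters), and $(0,0,0)$ clearly belongs to $\mathcal{E}^p_\beta(0,T)$. Since $f$ does not depend on the $u$-variable by hypothesis, the structural restriction imposed on the first generator in Proposition \ref{Estimation p less stricly than 2} is satisfied, so I may set $(Y^1,Z^1,U^1)=(Y,Z,U)$ and $(\xi_1,f_1)=(\xi,f)$ and apply the proposition verbatim.

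With this choice, the differences collapse onto the solution itself: $\widehat{Y}=Y$, $\widehat{Z}=Z$, $\widehat{U}=U$ and $\widehat{\xi}=\xi$, so the left-hand side of the inequality in Proposition \ref{Estimation p less stricly than 2} is exactly the quantity to be estimated in the corollary. For the right-hand side, the driver contribution reduces to
$$
\widehat{f}(s,Y^2_s,Z^2_s,U^2_s)=f(s,0,0)-f_2(s,0,0)=f(s,0,0),
$$
since both generators are evaluated at the null state and $f_2\equiv 0$.

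It then remains to control $f(s,0,0)$ by $\varphi_s$. Evaluating the stochastic linear growth condition \textsc{(H4)} at $y=0$ gives $\left|f(s,0,0)\right|\leq\varphi_s$ pointwise, $d\mathbb{P}\otimes ds$-a.e., whence
$$
\mathbb{E}\int_{0}^{T}e^{\beta A_s}\left|\widehat{f}(s,Y^2_s,Z^2_s,U^2_s)\right|^p ds\leq\mathbb{E}\int_{0}^{T}e^{\beta A_s}\left|\varphi_s\right|^p ds.
$$
Substituting the two computations above into the conclusion of Proposition \ref{Estimation p less stricly than 2} yields the announced estimate, with the same constant $\mathfrak{c}_{p,T}$. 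The argument is essentially bookkeeping; the only point requiring a moment's care is verifying that the null triplet genuinely qualifies as an $\mathbb{L}^p$-solution for admissible data, so that the proposition applies as stated — which is immediate here since $f_2\equiv 0$ and $\xi_2=0$ meet all the hypotheses.
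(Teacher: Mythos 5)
Your argument is correct and coincides with what the paper intends: the corollary is stated as a direct consequence of Proposition \ref{Estimation p less stricly than 2}, obtained exactly by taking the second solution to be the null triplet associated with $(\xi_2,f_2)=(0,0)$, so that $\widehat{f}(s,Y^2_s,Z^2_s,U^2_s)=f(s,0,0,0)$ is controlled by $\varphi_s$ via \textsc{(H4)}. The only (harmless) difference is that you drop the nonnegative term $\mathbb{E}\int_0^T e^{\frac{p}{2}\beta A_s}|Y_s|^p\,dA_s$ from the left-hand side, which only weakens the inequality.
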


\subsubsection{The case where the generator $f$ does not depend on the $(z,u)$-variables}
In what follows, we treat the case where the coefficient $f$ is independent of the variables $(z,u)$, that is,
$$
f(t,y,z,u)=f(t,y),
$$
for any $(t,y,z,u)\in[0,T]\times\mathbb{R}^d\times\mathbb{R}^{d\times k}\times\left(\mathbb{L}^1_Q+\mathbb{L}^2_Q\right)$. 

Then, we consider the following version of the BSDE \eqref{basic BSDE}
\begin{equation}\label{basic BSDE--p less than 2 f inde}
	Y_t=\xi+\int_{t}^{T}{f}(s,Y_s)ds-\int_{t}^T Z_s d W_s-\int_{t}^{T}\int_{E}U_s(e) \tilde{N}(ds,de),\quad t \in [0,T].
\end{equation}	
\begin{proposition}\label{Propo 1--Existence p less than 2}
 Suppose that \textsc{(H1)}--\textsc{(H6)} hold. Then the  BSDE \eqref{basic BSDE--p less than 2 f inde} has a unique $\mathbb{L}^p$-solution.
\end{proposition}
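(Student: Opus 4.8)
\emph{Uniqueness.} Since $\mathfrak{f}$ depends on neither $z$ nor $u$, uniqueness is exactly the content of Corollary \ref{Coro 1 p less than 2}, so nothing further is required there.

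\emph{Existence -- strategy and approximation.} The plan is to realise the solution as the $\mathcal{E}^p_\beta(0,T)$-limit of $\mathbb{L}^2$-solutions produced by Theorem \ref{thm1}, forcing convergence through Proposition \ref{Estimation p less stricly than 2}. Let $q_n(x)=x\,n/(n\vee|x|)$ and set $\xi_n:=q_n(\xi)$. For the generator I would introduce a regularisation $\mathfrak{f}_n(t,\cdot)$ that is bounded at the origin, globally Lipschitz in $y$, and preserves both the monotonicity \textsc{(H2)} and the continuity \textsc{(H6)}: concretely the Moreau-Yosida approximation of the continuous monotone map $y\mapsto-\mathfrak{f}(t,y)$, recentred so that $\mathfrak{f}_n(t,0)=q_n(\mathfrak{f}(t,0))$. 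Each pair $(\xi_n,\mathfrak{f}_n)$ then has bounded terminal value and bounded value at $0$ together with a controlled (Lipschitz) growth in $y$, hence satisfies \textsc{(H1)}--\textsc{(H6)} with $p=2$; Theorem \ref{thm1} yields a unique $(Y^n,Z^n,U^n)\in\mathcal{E}^2_\beta(0,T)$ solving \eqref{basic BSDE--p less than 2 f inde} with these data. Because the exponent $p/2<1$, Jensen's inequality places each triplet in $\mathcal{E}^p_\beta(0,T)$, and Corollary \ref{coro 2 p less than 2} bounds them uniformly in $n$.

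\emph{Cauchy property.} For the decisive estimate I would apply Proposition \ref{Estimation p less stricly than 2} to the indices $n$ and $m$ (admissible since $\mathfrak{f}_n$ is $u$-independent), obtaining
$$
\big\|(Y^n-Y^m,Z^n-Z^m,U^n-U^m)\big\|^p_{\mathcal{E}^p_\beta(0,T)}\le \mathfrak{c}_{p,T}\Big(\mathbb{E}\big[e^{\frac{p}{2}\beta A_T}|\xi_n-\xi_m|^p\big]+\mathbb{E}\!\int_0^T e^{\beta A_s}\big|\mathfrak{f}_n(s,Y^m_s)-\mathfrak{f}_m(s,Y^m_s)\big|^p ds\Big).
$$
The terminal term vanishes by dominated convergence from $|\xi_n|\le|\xi|$ and \textsc{(H1)}. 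For the driver term I would split $\mathfrak{f}_n(s,Y^m_s)-\mathfrak{f}_m(s,Y^m_s)$ into the difference of the recentring constants $q_n(\mathfrak{f}(s,0))-q_m(\mathfrak{f}(s,0))$, which is solution-free and dominated by $\varphi_s$ through \textsc{(H4)}, and the difference of the two Yosida increments at $Y^m_s$, which is pointwise small and dominated via the linear growth \textsc{(H4)} and the uniform bounds on $Y^m$. Dominated convergence then gives the Cauchy property in $\mathcal{E}^p_\beta(0,T)$ and a limit $(Y,Z,U)$.

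\emph{Passage to the limit and main obstacle.} Extracting an a.s.-convergent subsequence, the driver converges since $\mathfrak{f}_n(s,Y^n_s)\to\mathfrak{f}(s,Y_s)$ pointwise -- by \textsc{(H6)}, by $Y^n\to Y$, and by $\mathfrak{f}_n\to\mathfrak{f}$ -- with an $\mathbb{L}^1(e^{\beta A_s}ds)$-dominating bound from \textsc{(H4)} and the uniform estimates; the stochastic integrals converge through the $\mathcal{H}^p_\beta$- and $\mathfrak{L}^p_\beta$-convergence of $(Z^n,U^n)$ and the BDG inequality, so that $(Y,Z,U)$ solves \eqref{basic BSDE--p less than 2 f inde}. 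I expect the genuine obstacle to be the base step rather than the limit: the growth coefficient $\varphi$ lies only in $\mathbb{L}^p$ and not in $\mathbb{L}^2$, so the approximating generators must be regularised in $y$ in a manner that simultaneously restores square-integrable data (to invoke the $p\ge 2$ theory), leaves the monotonicity constant untouched, and keeps the error term of Proposition \ref{Estimation p less stricly than 2} convergent. This is precisely why a recentred Moreau-Yosida truncation, rather than a naive cut-off of $\mathfrak{f}$, is the natural device, since the latter would not be compatible with the $u$-free structure and the monotonicity required by the a priori estimate.
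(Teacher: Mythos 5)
Your route is genuinely different from the paper's, and in its present form it does not close. The paper does not regularise $\mathfrak{f}$ at all: since $\mathfrak{f}$ is free of $(z,u)$, it observes (after the change of variable of Remark \ref{rmq essential}) that $(\xi,\mathfrak{f})$ already satisfies the hypotheses of Theorem 2 and Proposition 3 of Kruse--Popier \cite{Kruse2015}, imports from there the existence and uniqueness of an \emph{unweighted} $\mathbb{L}^p$-solution, and only then upgrades to the weighted estimate by rerunning Proposition \ref{Estimation p less stricly than 2} / Corollary \ref{coro 2 p less than 2} first for bounded $A_T$ and passing to the limit with Fatou's lemma. You instead try to rebuild the solution from the paper's own $p\geq 2$ theory via a Moreau--Yosida/truncation scheme; that is a legitimate alternative in principle, but two steps fail as written.

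First, the base step: to invoke Theorem \ref{thm1} with $p=2$ for $(\xi_n,\mathfrak{f}_n)$ you need \textsc{(H1)} with exponent $2$, i.e.\ $\mathbb{E}\big[e^{\beta A_T}|\xi_n|^2\big]<+\infty$. Boundedness of $\xi_n$ reduces this to $\mathbb{E}\big[e^{\beta A_T}\big]<+\infty$, which the standing hypotheses do \emph{not} provide: \textsc{(H4)} only yields $\mathbb{E}\int_0^T e^{\beta A_s}\,ds<+\infty$, and since $A$ may increase arbitrarily fast near $T$ this does not control $e^{\beta A_T}$. (There is also the quieter mismatch that $A$ itself changes between the two regimes, $\zeta^2=a^2$ for $p\geq 2$ versus $\zeta^2=|a|^q$ for $p<2$, so the $\mathcal{E}^2_\beta$- and $\mathcal{E}^p_\beta$-norms run on different clocks.) An additional localisation of $A$ --- exactly the ``bounded $A_T$ then Fatou'' device the paper uses --- is indispensable here and is missing from your sketch. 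Second, the Cauchy step: the error term delivered by Proposition \ref{Estimation p less stricly than 2} is $\mathbb{E}\int_0^T e^{\beta A_s}\big|\mathfrak{f}_n(s,Y^m_s)-\mathfrak{f}_m(s,Y^m_s)\big|^p\,ds$, evaluated at the \emph{moving} point $Y^m_s$. Pointwise convergence of the Yosida approximations at each fixed $y$ does not make this integrand tend to $0$; you would need locally uniform convergence of $\mathfrak{f}_n(t,\cdot)$ on compacts together with tightness of $\{Y^m\}$, or a monotone-stability argument, and domination alone (your appeal to \textsc{(H4)} and the uniform bounds) cannot substitute for the missing pointwise convergence. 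These are precisely the difficulties the paper sidesteps by citing \cite{Kruse2015} for the unweighted existence.
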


\begin{proof}
Throughout this proof, we use the convention introduced after Remark \ref{rmq essential}. In particular, after the change of variables and keeping the same notation for the transformed generator, the generator satisfies the non-increasing condition in the $y$-variable.

Therefore, assumption \textsc{(H2)} can be rewritten as follows: for all $y,y'\in\mathbb{R}^d$, $d\mathbb{P}\otimes dt$-a.e.,
$$
\left(y - y^{\prime}\right)\left({f}\left(t,y\right) - {f}\left(t,y^{\prime}\right)\right) \leq 0.
$$
Then, from assumptions (H1), (H4), and (H6), it is clear that the data $\left(\xi, {f}\right)$ satisfies the conditions of Theorem 2 in \cite{Kruse2015}. Moreover, since $\mathfrak{f}$ does not depend on the $u$-variable, we can deduce from \cite[Theorem 2]{Kruse2015}, \cite[Proposition 3]{Kruse2015}, and \cite[page 2]{Kruse2017} that there exists a unique solution $(Y,Z,U)$ of the BSDE \eqref{basic BSDE--p less than 2 f inde} such that, for some constant $\mathfrak{C}_{p,T}$, we have
\begin{equation*}
	\begin{split}
		&\mathbb{E}\left[\sup_{t \in [0,T]} \big|{Y}_{t}\big|^p \right]+\mathbb{E}\left[\left(\int_{0}^{T} \big\|{Z}_s\big\|^2 ds\right)^{\frac{p}{2}}\right]
		+\mathbb{E}\left[\left(\int_{0}^{T}\int_{E} \big|{U}_s(e)\big|^2 N(ds,de)\right)^{\frac{p}{2}}\right]\\
		&\leq  \mathfrak{C}_{p,T} \left(\mathbb{E}\left[ \big|{\xi}\big|^p\right]  +\mathbb{E}\int_{0}^{T}\left|\varphi_s\right|^pds\right).
	\end{split}
\end{equation*}

Now, because the solution $(Y,Z,U)$ obtained from the results of \cite{Kruse2015,Kruse2017} may not have the required integrability in our setting, we need to use a localization procedure. Indeed, the works \cite{Kruse2015,Kruse2017} deal with Lipschitz coefficients with respect to $(z,u)$, whereas we consider stochastic Lipschitz coefficients. This lack of integrability can be overcome by an argument inspired by the proof of Proposition 54.2 in \cite{Pardoux1997}, or equivalently by the localization procedure that is used throughout this paper. Since, in the present case, the generator $f$ does not depend on the variables $(z,u)$, the situation is simpler than that of Proposition \ref{Estimation p less stricly than 2}. Therefore, we may use the same arguments as in Proposition \ref{Estimation p less stricly than 2}, first when the random variable $A_T$ is bounded. In the general case, we introduce the localizing sequence $(\tau_k)_{k\geq1}$ defined by
$
\tau_k:=\inf\left\{t\geq0:A_t\geq k\right\}\wedge T.
$
Then, by applying Fatou's lemma and using Corollary \ref{coro 2 p less than 2}, we deduce that there exists a constant $\mathfrak{C}_{p,T}$ such that, for any $\beta>0$, we have
\begin{equation}\label{UI for p less than 2}
	\begin{split}
		&\mathbb{E}\left[\sup_{t \in [0,T]} e^{\frac{p}{2}\beta A_{t  }}\big|{Y}_{t}\big|^p \right]+\mathbb{E}\int_{0}^{T}e^{\frac{p}{2}\beta A_{s}}\big|{Y}_{s}\big|^p dA_s+\mathbb{E}\left[\left(\int_{0}^{T}e^{\beta A_s} \big\|{Z}_s\big\|^2 ds\right)^{\frac{p}{2}}\right]\\
		&+\mathbb{E}\left[\left(\int_{0}^{T}e^{\beta A_s}\int_{E} \big|{U}_s(e)\big|^2 N(ds,de)\right)^{\frac{p}{2}}\right]\\
		&\leq  \mathfrak{C}_{p,T} \left(\mathbb{E}\left[ e^{\frac{p}{2}\beta A_{T}}\big|{\xi}\big|^p\right]  +\mathbb{E}\int_{0}^{T}e^{\beta A_s}\left|\varphi_s\right|^pds\right).
	\end{split}
\end{equation}
Completing the proof.
\end{proof}

By the convention introduced after Remark \ref{rmq essential}, we may work with a generator satisfying the non-increasing condition in the $y$-variable.
\begin{proposition}\label{OO7}
	Let $(z,u) \in \mathcal{H}^p_\beta \times \mathcal{L}^p_{N,\beta}$ and $(\xi,f)$ a given pair of data. Suppose that $(\xi,f)$ verifies \textsc{(H1)}--\textsc{(H6)}. Then the BSDE 
	\begin{equation}\label{general case}
		Y_t=\xi+\int_{t}^{T}f (s,Y_s,z_s,u_s)-\int_{t}^{T}Z_s dW_s-\int_{t}^{T} \int_{E}U_s(e)\tilde{N}(ds,de)
	\end{equation}
has a unique $\mathbb{L}^p$-solution.
\end{proposition}

\begin{proof}
 For every $n \geq 1$, we define
 $$
 \xi^n := q_n(\xi), \quad \mathfrak{f}_n(\omega, t, y) := f(\omega, t, y, z_t, u_t) - f(\omega, t, 0, z_t, u_t) + q_n(f(\omega, t, 0, z_t, u_t)),
 $$
 with $q_n(x) = \frac{x n}{|x| \vee n}$. Clearly, for any $y \in \mathbb{R}^d$, the process $\mathfrak{f}_k(\cdot, y)$ is progressively measurable. Moreover, from Remark \ref{rmq essential}, the driver $\mathfrak{f}_k$ satisfies, for all $t \in [0,T]$, $y, y' \in \mathbb{R}^d$, and $d\mathbb{P} \otimes dt$-a.e.,
 \begin{equation}\label{fk new mon}
 	\left(y - y'\right)\left(\mathfrak{f}_k(t, y) - \mathfrak{f}_k(t, y')\right) \leq 0.
 \end{equation}
 Furthermore, it is evident that $\big|q_n(\xi)\big| \leq n$ and 
 $$
 \big|\mathfrak{f}_n(\omega, t, 0)\big|^p = \big|q_n(f(\omega, t, 0, z_t, u_t))\big|^p \leq n^p,
 $$
 which implies, by using the fact that $\varphi \geq 1$ together with assumption \textsc{(H4)}, we obtain
 $$
 \mathbb{E}\int_{0}^{T} e^{\beta A_s} \big|\mathfrak{f}_n(s,0)\big|^p ds \leq  n^p \mathbb{E}\int_{0}^{T} e^{\beta A_s} ds
 \leq
 n^p \mathbb{E}\int_{0}^{T} e^{\beta A_s} |\varphi_s|^p ds
 <+\infty .
 $$
 Using Proposition \ref{Propo 1--Existence p less than 2}, we deduce that, for each $n \geq 1$, there exists a unique triplet $(Y^n, Z^n, U^n)$ that belongs to $\mathcal{E}^p_\beta(0, T)$ and solves the following BSDE:
 \begin{equation}\label{Dhr}
 	Y^n_t = \xi^n + \int_{t}^{T} \mathfrak{f}_n(s, Y^n_s) \, ds - \int_{t}^{T} Z^n_s \, dW_s - \int_{t}^{T} \int_{E} U^n_s(e) \tilde{N}(ds, de),\quad t \in [0,T].
 \end{equation}
 Let us fix integers $ m >n \geq 1$ such that $k \leq m$. Let $(Y^n, Z^n, U^n)$ and $(Y^m, Z^m, U^m)$ be two $\mathbb{L}^2$-solutions of the BSDE \eqref{Approx BSDE k p geq 2} associated with the parameters $(\xi^n, \mathfrak{f}_n)$ and $(\xi^m, \mathfrak{f}_m)$, respectively. Define $\widehat{\mathcal{R}}^{n,m} = \mathcal{R}^n - \mathcal{R}^m$, where $\mathcal{R} \in \{\xi, Y, Z, U\}$. Our goal is to show that the sequence $\left\{\left(Y^n, Z^n, U^n\right)\right\}_{n \geq 1}$ is a Cauchy sequence in $\mathcal{E}^p_\beta(0,T)$. To this end, we first provide estimations for the generator part of the semimartingale $\big(e^{\frac{p}{2}\beta A_t} \big|\widehat{Y}_t\big|\big)_{t \leq T}$. Using Young's inequality, we have
 \begin{equation}\label{new--generator}
 	\begin{split}
 		& p e^{\frac{p}{2}\beta A_s} \big|\widehat{Y}^{n,m}_s\big|^{p-1} \smash{\check{\widehat{Y}}}^{n,m}_s
 		 \left(\mathfrak{f}_n(s, Y^n_s) - \mathfrak{f}_m(s, Y^m_s)\right) \, ds \\
 		& \leq (p-1) e^{\frac{p}{2}\beta A_s} \big|\widehat{Y}^{n,m}_s\big|^p \, dA_s + e^{\frac{p}{2}\beta A_s} \left|\frac{\left|q_n(f(s, 0, z_s, u_s)) - q_m(f(s, 0, z_s, u_s))\right|}{a_s}\right|^p \, ds.
 	\end{split}
 \end{equation}
 Following similar arguments as those used in Proposition \ref{Estimation p less stricly than 2}, with \eqref{new--generator} and an appropriate choice of $\beta$, we deduce the existence of a constant $\mathfrak{C}_{p,T,\epsilon}$ such that
	\begin{equation}\label{UI p less than 2}
	\begin{split}
		&\mathbb{E}\left[\sup_{t \in [0,T]} e^{\frac{p}{2}\beta A_{t  }}\big|\widehat{Y}^{n,m}_{t}\big|^p \right]+\mathbb{E}\int_{0}^{T}e^{\frac{p}{2}\beta A_{s}}\big|\widehat{Y}^{n,m}_{s}\big|^p dA_s+\mathbb{E}\left[\left(\int_{0}^{T}e^{\beta A_s} \big\|\widehat{Z}^{n,m}_s\big\|^2 ds\right)^{\frac{p}{2}}\right]\\
		&\qquad+\mathbb{E}\left[\left(\int_{0}^{T}e^{\beta A_s}\int_{E} \big|\widehat{U}^{n,m}_s(e)\big|^2 N(ds,de)\right)^{\frac{p}{2}}\right]\\
		&\leq  \mathfrak{C}_{p,T,\epsilon} \left(\mathbb{E}\left[ e^{\frac{p}{2}\beta A_T}\big|\widehat{\xi}^{n,m}\big|^p\right] +\mathbb{E}\int_{0}^{T}e^{\frac{p}{2}\beta A_s} \left|\frac{\left|q_n(f(s,0,z_s,u_s))-q_m(f(s,0,z_s,u_s))\right|}{a_s}\right|^p ds\right).
	\end{split}
\end{equation}
On the other hand, by using the basic inequality \eqref{Dizzy}
and assumption \text{(H3)} on $f$, we have
\begin{equation*}
	\begin{split}
		\left|\frac{\left|q_n(f(s,0,z_s,u_s))-q_m(f(s,0,z_s,u_s))\right|}{a_s}\right|^p
		& \leq \mathfrak{C}_p\left(\|z\|^p+\|u_s\|^p_{\mathbb{L}^1_Q+\mathbb{L}^2_Q}+\left|\frac{\varphi_s}{a_s}\right|^p\right)
	\end{split}
\end{equation*}
Then, passing to the Lebesgue-Stieltjes integral and applying Jensen's inequality, we obtain
\begin{equation}\label{Generator p less than 2}
	\begin{split}
		&\int_{0}^{T}e^{\frac{p}{2}\beta A_s} \left|\frac{\left|q_n(f(s,0,z_s,u_s))-q_m(f(s,0,z_s,u_s))\right|}{a_s}\right|^p ds\\
		& \leq \mathfrak{C}_{p,T,\epsilon}\left(  \left(\int_{0}^{T}e^{\beta A_s}\big\|z_s\big\|^2 ds \right)^{\frac{p}{2}}+ \int_{0}^{T}e^{\frac{p}{2}\beta A_s}\big\|u_s\big\|^p_{\mathbb{L}^1_Q+\mathbb{L}^2_Q}ds+ \int_{0}^{T}e^{\beta A_s}\left|\varphi_s\right|^p ds\right) .
	\end{split}
\end{equation}
From the definitions of $q_n$ and $q_m$, we have $\lim\limits_{n,m \rightarrow +\infty} \left|q_n(f(s,0,z_s,u_s)) - q_m(f(s,0,z_s,u_s))\right| = 0$ a.s. Then, using the estimation above, along with the facts that $z \in \mathcal{H}^2_\beta$, $u \in \mathcal{L}^p_{N,\beta}$ with Corollary \ref{cor L1L2 control}, assumption \textsc{(H4)}, and the Lebesgue dominated convergence theorem, we conclude that $\left\{\left(Y^n,Z^n,U^n\right)\right\}_{n \geq 1}$ is a Cauchy sequence in $\mathcal{E}^p_\beta(0,T)$. Thus, there exists a triplet of stochastic processes $(Y,Z,U)$ that represents the limiting process of the sequence $\left\{(Y^n,Z^n,U^n)\right\}_{n \geq 1}$ in the space $\mathcal{E}^p_\beta(0,T)$. By the BDG inequality, we have
\begin{equation*}
 \begin{split}
 	\lim\limits_{n \rightarrow+\infty}\mathbb{E}\left[\sup_{0\leq t\leq T} \left|\int_{t}^{T}Z^n_s dW_s-\int_{t}^{T}Z_s dW_s\right|^p\right] \leq \mathfrak{c}\lim\limits_{n \rightarrow+\infty}\mathbb{E}\left[\left(\int_{0}^{T}\big\|Z^n_s-Z_s\big\|^2ds\right)^{\frac{p}{2}}\right] =0.
 \end{split}
\end{equation*}
Similarly, we derive
\begin{equation*}
	\begin{split}
		\lim\limits_{n \rightarrow+\infty}\mathbb{E}\left[\sup_{0\leq t\leq T} \left|\int_{t}^{T} \int_E U^n_s(e) \tilde{N}(ds,de)-\int_{t}^{T}\int_E U_s(e) \tilde{N}(ds,de)\right|^p\right]=0.
	\end{split}
\end{equation*}
Note that $\big|q_n(\xi)\big|^p \leq \big|\xi\big|^p$ and $\lim\limits_{n \rightarrow +\infty} q_n(\xi) = \xi$ a.s. Then, using assumption \textsc{(H1)} and the dominated convergence theorem, we obtain 
$$
\lim\limits_{n \rightarrow +\infty} \mathbb{E}\left[e^{\frac{p}{2}\beta A_T} \big|q_n(\xi) - \xi\big|^p\right] = 0.
$$
Next, applying the estimation \eqref{UI p less than 2}, and passing to the limit as $n \rightarrow +\infty$, along with Fatou's Lemma and estimation \eqref{Generator p less than 2}, we obtain
	\begin{equation}\label{UI p less than 2--limmiting}
		\resizebox{\textwidth}{!}{$
	\begin{split}
		&\mathbb{E}\left[\sup_{t \in [0,T]} e^{\frac{p}{2}\beta A_{t  }}\big|{Y}_{t}\big|^p \right]+\mathbb{E}\int_{0}^{T}e^{\frac{p}{2}\beta A_{s}}\big|{Y}_{s}\big|^p dA_s+\mathbb{E}\left[\left(\int_{0}^{T}e^{\beta A_s} \big\|{Z}_s\big\|^2 ds\right)^{\frac{p}{2}}\right]\\
		&\qquad+\mathbb{E}\left[\left(\int_{0}^{T}e^{\beta A_s}\int_{E} \big|{U}_s(e)\big|^2 N(ds,de)\right)^{\frac{p}{2}}\right]\\
		&\leq  \mathfrak{C}_{p,T,\epsilon} \left(\mathbb{E}\left[ e^{\frac{p}{2}\beta A_T}\big|{\xi}\big|^p\right] +\mathbb{E}\left[\left(\int_{0}^{T}e^{\beta A_s}\big\|z_s\big\|^2 ds \right)^{\frac{p}{2}}\right] +\mathbb{E}\left[\int_{0}^{T}e^{\frac{p}{2}\beta A_s}\big\|u_s\big\|^p_{\mathbb{L}^1_Q+\mathbb{L}^2_Q} ds\right]+\mathbb{E}\int_{0}^{T}e^{\beta A_s}\left|\varphi_s\right|^p ds\right).
	\end{split}
$}
\end{equation}
Using the continuity of the function $f$ w.r.t. to the $y$-variable (assumption \textsc{(H1)}), we deduce that 
$$
\lim\limits_{n \rightarrow +\infty} \mathfrak{f}_n(t,Y^n_t) = f(t,Y_t,z_t,u_t) \quad \text{a.s.}
$$
Passing to the limit term by term in \eqref{Dhr}, we deduce that $(Y,Z,U)$ is the unique solution of the BSDE \eqref{Dhr}. Moreover, the solution belongs to $\mathcal{E}^p_\beta(0,T)$ due to \eqref{UI p less than 2--limmiting}, completing the proof.
\end{proof}
\subsubsection{The general case where the generator $f$ depends on the $(z,u)$-variables}
Now, we state the main result of this section.
\begin{theorem}
	Suppose that \textsc{(H1)}--\textsc{(H6)} hold. Then there exists a constant $\beta^\ast_{p,\epsilon,T}$ such that, for every $\beta\geq\beta^\ast_{p,\epsilon,T}$ for which the integrability assumptions in \textsc{(H1)} are satisfied, the BSDE \eqref{basic BSDE} admits a unique $\mathbb{L}^p$-solution in $\mathcal{E}^p_\beta(0,T)$.
\end{theorem}

\begin{proof}
	Let $(z, u) \in \mathcal{H}^p_\beta \times \mathcal{L}^p_{N,\beta}$ and set $\mathfrak{f}(t, y) := f(t, y, z_t,u_t)$. From Proposition \ref{OO7}, it follows that the BSDE
	\begin{equation*}
		Y_t = \xi + \int_{t}^{T} f(s, Y_s, z_s, u_s) \, ds - \int_{t}^{T} Z_s \, dW_s - \int_{t}^{T} \int_{E} U_s(e) \tilde{N}(ds, de)
	\end{equation*}
	has a unique solution $(Y, Z, U) \in \mathcal{E}^p_\beta(0, T)$.\\
	Then, for each $(y, z, u) \in \mathcal{B}^p_\beta(0, T):=\mathcal{S}^{p,A}_\beta \times \mathcal{H}^p_\beta \times \mathfrak{L}^p_{\beta}$, we can define a mapping $\Phi(y, z, u) = (Y, Z, U)$ from $\mathcal{B}^p_\beta(0, T)$ into itself, where $(Y, Z, U)$ is the unique solution of (\ref{general case}) associated with $(\xi, f(\cdot, \cdot, z, u))$.\\
	Consider two triplets of stochastic processes $(y^1, z^1, u^1)$ and $(y^2, z^2, u^2)$ in $\mathcal{B}^p_\beta(0, T)$.\\
	Let $(Y^1, Z^1, U^1)$ and $(Y^2, Z^2, U^2)$ be two solutions of the BSDE \eqref{general case} associated with the parameters $(\xi, f(\cdot, \cdot, z^1, u^1))$ and $(\xi, f(\cdot, \cdot, z^2, u^2))$. Define $\widehat{\mathcal{R}} = \mathcal{R}^1 - \mathcal{R}^2$ for $\mathcal{R} \in \left\{Y, Z, U, y, z, u\right\}$. We aim to prove that the mapping $\Phi$ is a strict contraction on $\mathcal{B}^p_\beta(0, T)$ equipped with the norm
	$$
	\|(Y, Z, U)\|_{\mathcal{B}^p_\beta(0, T)}^p
	:= \|Y\|_{\mathcal{S}^{p,A}_\beta}^p + \|Z\|_{\mathcal{H}^{p}_\beta}^p + \|U\|_{\mathcal{L}^{p}_{N,\beta}}^p + \|U\|_{\mathcal{L}^{p}_{Q,\beta}}^p.
	$$
    By applying Lemmas \ref{Lemma Ito for p less than 2} and \ref{Lemma betap for inequality}, we obtain
	\begin{equation}\label{Ito for p<2--v1--Lot of calc}
	\begin{split}
		&e^{\frac{p}{2}\beta A_{t}}\big|\widehat{Y}_{t}\big|^p+\frac{p}{2}\beta\int_{t}^{T}e^{\frac{p}{2}\beta A_s} \big|\widehat{Y}_s\big|^p dA_s
		+\mathfrak{b}_p\int_{t}^{T}e^{\frac{p}{2}\beta A_s} \big|\widehat{Y}_s\big|^{p-2}\big\| \widehat{Z}_s \big\|^2 \mathds{1}_{\{\widehat{Y}_{s} \neq 0\}} ds\\
		&+\mathfrak{b}_p\int_{t}^T\int_{E}e^{\frac{p}{2}\beta A_s}|\widehat{U}_s(e)|^2\left(|\widehat{Y}_{s-}|^{2}\vee|\widehat{Y}_{s-}+\widehat{U}_s(e)|^2\right)^{\frac{p-2}{2}}\mathds{1}_{\{|\widehat{Y}_{s-}|\vee|\widehat{Y}_{s-}+\widehat{U}_s(e)|\neq 0\}}N(ds,de)\\
		\leq& p\int_{t}^{T}e^{\frac{p}{2}\beta A_s} \big|\widehat{Y}_s\big|^{p-1}\check{\widehat{Y}}_s  \left(f(s,Y^1_s, z^1_s,u^1_s)-f(s,Y^2_s, z^2_s,u^2_s)\right)ds \\
		&-p\int_{t}^{T}e^{\frac{p}{2}\beta A_s} \big|\widehat{Y}_s\big|^{p-1} \check{\widehat{Y}}_s \widehat{Z}_s dW_s
		-p\int_{t}^{T}e^{\frac{p}{2}\beta A_s}\int_{E} \big|\widehat{Y}_{s-}\big|^{p-1} \check{\widehat{Y}}_{s-} \widehat{U}_s(e)\tilde{N}(ds,de)
	\end{split}
\end{equation}	 
By employing a similar localization procedure as in Proposition \ref{Estimation p less stricly than 2} and taking the expectation on both sides of \eqref{Ito for p<2--v1--Lot of calc}, we get
	\begin{equation}\label{general case Ito less than 2}
	\begin{split}
		&\mathbb{E}e^{\frac{p}{2}\beta A_{t} }\big|\widehat{Y}_{t}\big|^p+\frac{p}{2}\beta\mathbb{E}\int_{t  }^{T}e^{\frac{p}{2}\beta A_s} \big|\widehat{Y}_s\big|^p dA_s
		+\mathfrak{b}_p\int_{t}^{T}e^{\frac{p}{2}\beta A_s} \big|\widehat{Y}_s\big|^{p-2}\big\| \widehat{Z}_s \big\|^2 \mathds{1}_{\{\widehat{Y}_{s} \neq 0\}} ds\\
		&+\mathfrak{b}_p\mathbb{E}\int_{t}^T\int_{E}e^{\frac{p}{2}\beta A_s}|\widehat{U}_s(e)|^2\left(|\widehat{Y}_{s-}|^{2}\vee|\widehat{Y}_{s-}+\widehat{U}_s(e)|^2\right)^{\frac{p-2}{2}}\mathds{1}_{\{|\widehat{Y}_{s-}|\vee|\widehat{Y}_{s-}+\widehat{U}_s(e)|\neq 0\}}N(ds,de)\\
		\leq& p\mathbb{E}\int_{t }^{T}e^{\frac{p}{2}\beta A_s} \big|\widehat{Y}_s\big|^{p-1}\check{\widehat{Y}}_s  \left(f(s,Y^1_s, z^1_s,u^1_s)-f(s,Y^2_s, z^2_s,u^2_s)\right)ds
	\end{split}
\end{equation}	
Using assumptions \textsc{(H2)}--\textsc{(H3)} and Remark \ref{rmq essential}, we get 
\begin{equation}\label{general case gene p less than 2}
	\begin{split}
		&\big|\widehat{Y}_s\big|^{p-1}\check{\widehat{Y}_s}  \left(f(s,Y^1_s, z^1_s,u^1_s)-f(s,Y^2_s, z^2_s,u^2_s)\right)\\
		&\leq \big|\widehat{Y}_s\big|^{p-2}\mathds{1}_ {\{\widehat{Y}_s \neq 0\}}\left(\alpha_s \big|\widehat{Y}_s\big|^2+ \big|\widehat{Y}_s\big| \big\|\widehat{z}_s\big\|\delta_s+ \big|\widehat{Y}_s\big| \big\|\widehat{u}_s\big\|_{\mathbb{L}^2_Q} \eta_s \right)\\
		& \leq \alpha_s \big|\widehat{Y}_s\big|^{p}  +\big|\widehat{Y}_s\big|^{p-1} \big\|\widehat{z}_s\big\|\delta_s+ \big|\widehat{Y}_s\big|^{p-1} \big\|\widehat{u}_s\big\|_{\mathbb{L}^2_Q}\eta_s \\
		& \leq \big|\widehat{Y}_s\big|^{p-1} \big\|\widehat{z}_s\big\|\delta_s+ \big|\widehat{Y}_s\big|^{p-1} \big\|\widehat{u}_s\big\|_{\mathbb{L}^1_Q+\mathbb{L}^2_Q}\eta_s .
	\end{split}
\end{equation}
On the other hand, using Hölder's inequality, Young's and Jensen's inequalities, for any $\varrho >0$, we obtain 
\begin{equation*}
	\begin{split}
		\int_{t}^{T}e^{\frac{p}{2}\beta A_s }\big|\widehat{Y}_s\big|^{p-1} \big|\widehat{z}_s\big| \delta_s ds 
		&\leq  \left(\int_{t}^{T}e^{\frac{p}{2}\beta A_s }\big|\widehat{Y}_s\big|^{p}dA_s \right)^{\frac{p-1}{p}}\left(\int_{t}^{T}e^{\frac{p}{2} \beta A_s} \big|\widehat{z}_s\big|^p ds\right)^{\frac{1}{p}}\\
		&\leq \frac{(p-1)\varrho^{\frac{1}{p-1}}}{p} \int_{t}^{T}e^{\frac{p}{2}\beta A_s }\big|\widehat{Y}_s\big|^{p}dA_s+\frac{(T-t)^{\frac{2-p}{2}}}{p\varrho} \left(\int_{t}^{T}e^{\beta A_s} \big\|\widehat{z}_s\big\|^2 ds\right)^{\frac{p}{2}}.
	\end{split}
\end{equation*}
Similarly, we can show that 
$$
\int_{t}^{T}e^{\frac{p}{2}\beta A_s }\big|\widehat{Y}_s\big|^{p-1} \big\|\widehat{u}_s\big\|_{\mathbb{L}^2_Q} \eta_s ds \leq  \frac{(p-1)\varrho^{\frac{1}{p-1}}}{p} \int_{t}^{T}e^{\frac{p}{2}\beta A_s }\big|\widehat{Y}_s\big|^{p}dA_s+\frac{1}{p\varrho} \int_{t}^{T}e^{\frac{p}{2} \beta A_s} \big\|\widehat{u}_s\big\|^p_{\mathbb{L}^1_Q+\mathbb{L}^2_Q} ds.
$$
Now, taking the expectation in the above two inequalities and applying Young's inequality and Corollary \ref{cor L1L2 control}, we obtain, for any $\varrho > 0$,
\begin{equation}\label{genera well estimates}
	\begin{split}
		&\mathbb{E}\int_{t}^{T}e^{\frac{p}{2}\beta A_s }\big|\widehat{Y}_s\big|^{p-1} \big|\widehat{z}_s\big| \delta_s ds+\mathbb{E}\int_{t}^{T}e^{\frac{p}{2}\beta A_s }\big|\widehat{Y}_s\big|^{p-1} \big\|\widehat{u}_s\big\|_{\mathbb{L}^2_Q} \eta_s ds\\
		& \leq 2\frac{(p-1)\varrho^{\frac{1}{p-1}}}{p}\mathbb{E}\int_{t}^{T}e^{\frac{p}{2}\beta A_s }\big|\widehat{Y}_s\big|^{p} dA_s+\frac{(T-t)^{\frac{2-p}{2}} \vee 1}{p\varrho} \left(\mathbb{E}\left[\left(\int_{t}^{T}e^{\beta A_s} \big|\widehat{z}_s\big|^2 ds\right)^{\frac{p}{2}}\right]  \right.\\
		&\left.\qquad+\mathbb{E}\left[\int_{t}^{T}e^{\frac{p}{2}\beta A_s} \big\|\widehat{u}_s\big\|^p_{\mathbb{L}^1_Q+\mathbb{L}^2_Q} ds\right] \right)\\
		& \leq 2\frac{(p-1)\varrho^{\frac{1}{p-1}}}{p}\mathbb{E}\int_{t}^{T}e^{\frac{p}{2}\beta A_s }\big|\widehat{Y}_s\big|^{p} dA_s+\frac{(T-t)^{\frac{2-p}{2}} \vee 1 \vee \mathfrak{C}_{p,T}}{p\varrho} \left(\mathbb{E}\left[\left(\int_{t}^{T}e^{\beta A_s} \big|\widehat{z}_s\big|^2 ds\right)^{\frac{p}{2}}\right]  \right.\\
		&\left.\qquad +\mathbb{E}\left[\left(\int_{t}^{T}e^{\beta A_s} \int_{E} \big|\widehat{u}_s(e)\big|^2 N(ds,de)\right)^{\frac{p}{2}}\right] \right),
	\end{split}
\end{equation}
where $\mathfrak{C}_{p,T}>0$ is a constant that stems from Corollary \ref{cor L1L2 control} and is independent of $\varrho$.\\
Coming back to \eqref{general case Ito less than 2}, and using \eqref{general case gene p less than 2} and \eqref{genera well estimates}, we obtain
\begin{equation*}
	\begin{split}
		&\mathbb{E}\left[ e^{\frac{p}{2}\beta A_{t} }\big|\widehat{Y}_{t}\big|^p\right] +\frac{p}{2}\beta\mathbb{E}\int_{t  }^{T}e^{\frac{p}{2}\beta A_s} \big|\widehat{Y}_s\big|^p dA_s
		+\mathfrak{b}_p\int_{t}^{T}e^{\frac{p}{2}\beta A_s} \big|\widehat{Y}_s\big|^{p-2}\big\| \widehat{Z}_s \big\|^2 \mathds{1}_{\{\widehat{Y}_{s} \neq 0\}} ds\\
		&+\mathfrak{b}_p\mathbb{E}\int_t^Te^{\frac{p}{2}\beta A_s}|\widehat{U}_s(e)|^2\left(|\widehat{Y}_{s-}|^{2}\vee|\widehat{Y}_{s-}+\widehat{U}_s(e)|^2\right)^{\frac{p-2}{2}}\mathds{1}_{\{|\widehat{Y}_{s-}|\vee|\widehat{Y}_{s-}+\widehat{U}_s(e)|\neq 0\}}N(ds,de)\\
		&\leq 2(p-1)\varrho^{\frac{1}{p-1}} \mathbb{E}\int_{t}^{T}e^{\frac{p}{2}\beta A_s }\big|\widehat{Y}_s\big|^{p-1} dA_s+\frac{(T-t)^{\frac{2-p}{2}} \vee 1 \vee \mathfrak{C}_{p,T}}{\varrho} \left(\mathbb{E}\int_{t}^{T}e^{\frac{p}{2}\beta A_s} \big|\widehat{y}_s\big|^p dA_s\right.\\
		&\left.\qquad+\mathbb{E}\left[\left(\int_{t}^{T}e^{\beta A_s} \big|\widehat{z}_s\big|^2 ds\right)^{\frac{p}{2}}\right] +\mathbb{E}\left[\left(\int_{t}^{T}e^{\beta A_s} \int_{E} \big|\widehat{u}_s(e)\big|^2 N(ds,de)\right)^{\frac{p}{2}}\right] \right).
	\end{split}
\end{equation*}
It is worth noting that the constant $\mathfrak{C}_{p,T}$ appearing below is still independent of $\varrho$. The parameter $\varrho$ appears only through the Young-type estimates above, and its contribution is absorbed by choosing the exponential parameter $\beta$ large enough. More precisely, for any fixed $\varrho>0$, if
$
\beta \geq 1+2(p-1)\varrho^{\frac{1}{p-1}}=:\beta^\ast_{p,\varrho},
$
then we deduce from the above estimation and a similar localization approach as in Proposition \ref{Estimation p less stricly than 2} that
\begin{equation}\label{Using in global case}
	\begin{split}
		&\mathbb{E}\int_{0}^{T}e^{\frac{p}{2}\beta A_s} \big|\widehat{Y}_s\big|^p dA_s
		+\mathfrak{b}_p\mathbb{E}\int_{0}^{T}e^{\frac{p}{2}\beta A_s} \big|\widehat{Y}_s\big|^{p-2}\big\| \widehat{Z}_s \big\|^2 \mathds{1}_{\{\widehat{Y}_{s} \neq 0\}} ds\\
		&+\mathfrak{b}_p\mathbb{E}\int_0^Te^{\frac{p}{2}\beta A_s}|\widehat{U}_s(e)|^2\left(|\widehat{Y}_{s-}|^{2}\vee|\widehat{Y}_{s-}+\widehat{U}_s(e)|^2\right)^{\frac{p-2}{2}}\mathds{1}_{\{|\widehat{Y}_{s-}|\vee|\widehat{Y}_{s-}+\widehat{U}_s(e)|\neq 0\}}N(ds,de)\\
		&\leq \frac{\mathfrak{C}_{p,T}}{\varrho}  \left(\mathbb{E}\int_{0 }^{T}e^{\frac{p}{2}\beta A_s} \big|\widehat{y}_s\big|^p dA_s+\mathbb{E}\left[\left(\int_{0}^{T}e^{\beta A_s} \big|\widehat{z}_s\big|^2 ds\right)^{\frac{p}{2}}\right] +\mathbb{E}\left[\left(\int_{t}^{T}e^{\beta A_s} \int_{E} \big|\widehat{u}_s(e)\big|^2 N(ds,de)\right)^{\frac{p}{2}}\right] \right).
	\end{split}
\end{equation}
Next, it suffices to repeat the arguments used in Proposition \ref{Estimation p less stricly than 2}, taking into account \eqref{Using in global case}. For the reader's convenience, we briefly outline them. Returning to \eqref{Ito for p<2--v1--Lot of calc} and applying the BDG inequality, we obtain
	\begin{equation*}
	\begin{split}
		&p\mathbb{E}\left[\sup_{t \in [0,T]}\left|\int_{t}^{T}e^{\frac{p}{2}\beta A_s} \big|\widehat{Y}_s\big|^{p-1} \check{\widehat{Y}}_s \widehat{Z}_s dW_s\right|\right]\\
		&\leq \frac{1}{4} \mathbb{E}\left[\sup_{t \in [0,T]}e^{\frac{p}{2}\beta}\big|\widehat{Y}_s\big|^{p}\right]+p^2 \mathfrak{c}^2\mathbb{E}\left[\int_{0}^{T}e^{\frac{p}{2}\beta A_s} \big|\widehat{Y}_s\big|^{p-2} \big\|\widehat{Z}_s\big\|^2 \mathds{1}_{\{\widehat{Y}_s \neq 0\}} ds\right].
	\end{split}
\end{equation*}
The last line follows from the basic inequality $ab \leq \frac{1}{2 \varrho} a^2 + \frac{\varrho}{2} b^2$ for any $\varrho > 0$ (recall that $\mathfrak{c} > 0$ is the universal constant in the BDG inequality). Similarly, by a comparable computation, we have
\begin{equation*}
	\begin{split}
		&p\mathbb{E}\left[\sup_{t \in [0,T]}\left|\int_{t}^{T}e^{\frac{p}{2}\beta A_s}\int_{E} \big|\widehat{Y}_{s-}\big|^{p-1} \check{\widehat{Y}}_{s-} \widehat{U}_s(e)\tilde{N}(ds,de)\right|\right]\\
		& \leq \frac{1}{4}\mathbb{E}\left[\sup_{0\leq t\leq T}e^{\frac{p}{2}\beta A_t}|\widehat{Y}_{t}|^{p}\right]\\
		&\qquad+p^2 \mathfrak{c}^2\mathbb{E}\int_0^T\int_{E}e^{\frac{p}{2}\beta A_s}\left(|\widehat{Y}_{s-}|^{2}\vee|\widehat{Y}_{s-}+\widehat{U}_s(e)|^2\right)^{\frac{p-2}{2}}\mathds{1}_{\{|\widehat{Y}_{s-}|\vee|\widehat{Y}_{s-}+\widehat{U}_s(e)|\neq 0\}}|\widehat{U}_s(e)|^2N(ds,de).
	\end{split}
\end{equation*}
Plugging this into \eqref{Ito for p<2--v1--Lot of calc} and taking the supremum on both sides, we obtain
	\begin{equation*}
	\begin{split}
		&\frac{1}{2}\mathbb{E}\left[\sup_{t \in [0,T]} e^{\frac{p}{2}\beta A_{t  }}\big|\widehat{Y}_{t}\big|^p \right]\\
		\leq&  {p^2 \mathfrak{c}^2}\left(\mathfrak{b}_p\mathbb{E}\left[\int_{0}^{T}e^{\frac{p}{2}\beta A_s} \big|\widehat{Y}_s\big|^{p-2} \big\|\widehat{Z}_s\big\|^2 \mathds{1}_{\{\widehat{Y}_s \neq 0\}} ds\right]\right. \\
		&\left.\qquad+\mathbb{E}\left[\int_0^T\int_{E}e^{\frac{p}{2}\beta A_s}\left(|\widehat{Y}_{s-}|^{2}\vee|\widehat{Y}_{s-}+\widehat{U}_s(e)|^2\right)^{\frac{p-2}{2}}\mathds{1}_{\{|\widehat{Y}_{s-}|\vee|\widehat{Y}_{s-}+\widehat{U}_s(e)|\neq 0\}}|\widehat{U}_s(e)|^2N(ds,de)\right]\right).
	\end{split}
\end{equation*}
This along with \eqref{Using in global case}, we deduce the existence of a constant $\mathfrak{C}_{p,T}$ such that
	\begin{equation}\label{sup for p less than 2--global}
	\begin{split}
		&\mathbb{E}\left[\sup_{t \in [0,T]} e^{\frac{p}{2}\beta A_{t  }}\big|\widehat{Y}_{t}\big|^p \right]\\
		&\leq \mathfrak{c}_p\left(\mathbb{E}\left[\int_{0}^{T}e^{\frac{p}{2}\beta A_s} \big|\widehat{Y}_s\big|^{p-2} \big\|\widehat{Z}_s\big\|^2 \mathds{1}_{\{\widehat{Y}_s \neq 0\}} ds\right] \right.\\
		&\left.  \qquad+\mathbb{E}\left[\int_0^T\int_{E}e^{\frac{p}{2}\beta A_s}\left(|\widehat{Y}_{s-}|^{2}\vee|\widehat{Y}_{s-}+\widehat{U}_s(e)|^2\right)^{\frac{p-2}{2}}\mathds{1}_{\{|\widehat{Y}_{s-}|\vee|\widehat{Y}_{s-}+\widehat{U}_s(e)|\neq 0\}}|\widehat{U}_s(e)|^2N(ds,de)\right]\right)\\
		&\leq \frac{\mathfrak{C}_{p,T}}{\varrho} \left(\mathbb{E}\int_{0 }^{T}e^{\frac{p}{2}\beta A_s} \big|\widehat{y}_s\big|^p dA_s+	\mathbb{E}\left[\left(\int_{0}^{T}e^{\beta A_s} \big|\widehat{z}_s\big|^2 ds\right)^{\frac{p}{2}}\right]   +\mathbb{E}\left[\left(\int_{t}^{T}e^{\beta A_s} \int_{E} \big|\widehat{u}_s(e)\big|^2 N(ds,de)\right)^{\frac{p}{2}}\right] \right).
	\end{split}
\end{equation}
From these estimations, and again using Proposition \ref{Estimation p less stricly than 2}, we obtain
\begin{equation*}
	\begin{split}
		&\mathbb{E}\left[\left(\int_{0}^{T}e^{\beta A_s}\int_{E} \big|\widehat{U}_s(e)\big|^2 N(ds,de)\right)^{\frac{p}{2}}\right]\\
		& \leq \frac{2-p}{2}\mathbb{E}\left[\sup_{t \in [0,T]} e^{\frac{p}{2}\beta A_{t  }}\big|\widehat{Y}_{t}\big|^p\right]\\
		&\quad+\frac{p}{2} \mathbb{E}\left[\int_{0}^{T}e^{\frac{p}{2}\beta A_s}\int_{E}\big|\widehat{U}_s(e)\big|^2\left(|\widehat{Y}_{s-}|^{2}\vee|\widehat{Y}_{s-}+\widehat{U}_s(e)|^2\right)^{\frac{p-2}{2}}\mathds{1}_{\{|\widehat{Y}_{s-}|\vee|\widehat{Y}_{s-}+\widehat{U}_s(e)|\neq 0\}}N(ds,de)\right],
	\end{split}
\end{equation*}
and
	\begin{equation*}
	\begin{split}
		\mathbb{E}\left[\left(\int_{0}^{T}e^{\beta A_s} \big\|\widehat{Z}_s\big\|^2 ds\right)^{\frac{p}{2}}\right]
		 &\leq \frac{2-p}{2}\mathbb{E}\left[\sup_{t \in [0,T]} e^{\frac{p}{2}\beta A_{t}}\big|\widehat{Y}_{t}\big|^p\right]\\
		 &\quad+\frac{p}{2}\mathbb{E}\left[\int_{0}^{T}e^{\frac{p}{2 }\beta A_s}\big|\widehat{Y}_{s}\big|^{p-2} \big\|\widehat{Z}_s\big\|^2 \mathds{1}_{\{\widehat{Y}_{s}\neq 0\} }ds\right].
	\end{split}
\end{equation*}
After summing the three estimates above, and with the same choice of $\beta$ as before, we obtain the existence of a constant $\mathfrak{C}_{p,T,\epsilon}>0$\footnote{Recall that the constant may vary from line to line.} such that
\begin{equation*}
	\begin{split}
		&\mathbb{E}\left[\int_{0}^{T}e^{\frac{p}{2}\beta A_s} \big|\widehat{Y}_s\big|^p dA_s\right]+\mathbb{E}\left[\left(\int_{0}^{T}e^{\beta A_s} \big\|\widehat{Z}_s\big\|^2 ds\right)^{\frac{p}{2}}\right]
		+\mathbb{E}\left[\left(\int_{0}^{T}e^{\beta A_s}\int_{E} \big|\widehat{U}_s(e)\big|^2 N(ds,de)\right)^{\frac{p}{2}}\right]\\
		&\leq \frac{\mathfrak{C}_{p,T,\epsilon}}{\varrho} \left(\mathbb{E}\int_{0 }^{T}e^{\frac{p}{2}\beta A_s} \big|\widehat{y}_s\big|^p dA_s+\left(\int_{0}^{T}e^{\beta A_s} \big|\widehat{z}_s\big|^2 ds\right)^{\frac{p}{2}} +\mathbb{E}\left[\left(\int_{0}^{T}e^{\beta A_s}\int_{E} \big|\widehat{u}_s(e)\big|^2 N(ds,de)\right)^{\frac{p}{2}}\right]\right).
	\end{split}
\end{equation*}
The constant $\mathfrak{C}_{p,T,\epsilon}$ in the last estimate is obtained from the preceding a priori estimates and is independent of $\varrho$. Hence, we may choose $\varrho>0$ large enough so that
$
\frac{\mathfrak{C}_{p,T,\epsilon}}{\varrho}<1.
$
For this fixed choice of $\varrho$, we set
$
\beta^\ast_{p,T,\epsilon}:=1+2(p-1)\varrho^{\frac{1}{p-1}}.
$
Consequently, for every $\beta\geq\beta^\ast_{p,T,\epsilon}$, the functional $\Phi$ is a strict contraction mapping on $\mathcal{B}^p_\beta(0,T)$. Hence, there exists a unique fixed point $(Y,Z,U)$ of $\Phi$, which is the unique $\mathbb{L}^p$-solution, for $p\in(1,2)$, of the BSDE \eqref{basic BSDE} associated with the parameters $(\xi,f)$. Moreover, we already know that $(Y,Z,U)\in\mathcal{B}^p_\beta(0,T)$. It is straightforward to see that the state process $Y$ belongs to $\mathcal{S}^p_\beta$ by virtue of Corollary \ref{coro 2 p less than 2}.\\
The proof is now complete.
\end{proof}

\begin{remark}
	The results of this paper for the case $p\geq2$ remain valid if the standard Poisson random measure is replaced by an integer-valued random measure $N$ on $([0,T]\times E,\mathcal{B}([0,T])\otimes\mathcal{E})$ with compensator $\upsilon(\omega;dt,du)=Q(\omega,t;du)\eta(\omega,t)dt$ satisfying $N(\omega;\{0\}\times E)=N(\omega;(0,T]\times\{0\})=\upsilon(\omega;(0,T]\times\{0\})=0$, where $\eta:\Omega\times[0,T]\to[0,\infty)$ is a predictable process and $Q$ is a kernel from $(\Omega\times[0,T],{\mathcal{P}})$ into $(E,\mathcal{E})$ such that $\int_0^t\int_E |e|^2Q(\omega,s,de)\eta(\omega,s)ds<+\infty$, $\mathbb{P}$-a.s.; in this case, the compensated random measure $\widetilde{N}(\omega;dt,du)=N(\omega; dt,du)-Q(\omega,t;du)\eta(\omega,t)dt$ is an $\mathbb{F}$-martingale measure, whereas the case $p\in(1,2)$ is more delicate than the previous one, since it relies on the auxiliary estimates developed in Section \ref{hrss}, in particular the Bichteler--Jacod inequality, whose use in the present form requires a deterministic compensating kernel $Q$, as discussed in \cite{Marinelli2014}.
\end{remark}

\section*{Disclosure statement}
No potential conflict of interest was reported by the authors.

\section*{Data availability statement}
No new data were created or analyzed in this study. Data sharing is not applicable to this article.

\section*{Funding}
The first author (Badr ELMANSOURI) was supported by the National Center for Scientific and Technical Research (CNRST), Morocco.


\begin{thebibliography}{10}
	
	\bibitem{badrjdg}
	{BSDEs and Reflected BSDEs with default time and stochastic Lipschitz drivers:
		Theory and some financial applications}.
	
	\bibitem{Bahlali2004}
	K.~Bahlali, A.~Elouaflin, and M.~N'zi.
	\newblock Backward stochastic differential equations with stochastic monotone
	coefficients.
	\newblock {\em International Journal of Stochastic Analysis}, 2004(4):179716,
	2004.
	
	\bibitem{BahlaliK}
	K.~Bahlali, E.~Essaky, and M.~Hassani.
	\newblock {Existence and uniqueness of multidimensional BSDEs and of systems of
		degenerate PDEs with superlinear growth generator}.
	\newblock {\em SIAM Journal on Mathematical Analysis}, 47(6):4251--4288, 2015.
	
	\bibitem{barles1997backward}
	G.~Barles, R.~Buckdahn, and E.~Pardoux.
	\newblock Backward stochastic differential equations and integral-partial
	differential equations.
	\newblock {\em Stochastics: An International Journal of Probability and
		Stochastic Processes}, 60(1-2):57--83, 1997.
	
	\bibitem{Bender2000BSDES}
	C.~Bender and M.~Kohlmann.
	\newblock {BSDES With Stochastic Lipschitz Condition}.
	\newblock CoFE Discussion Paper 00/08, Konstanz, 2000.
	
	\bibitem{Bismut1973}
	J.~M. Bismut.
	\newblock Conjugate convex functions in optimal stochastic control.
	\newblock {\em Journal of Mathematical Analysis and Applications},
	44(2):384--404, 1973.
	
	\bibitem{briand2000bsdes}
	P.~Briand and R.~Carmona.
	\newblock {BSDEs with polynomial growth generators}.
	\newblock {\em International Journal of Stochastic Analysis}, 13(3):207--238,
	2000.
	
	\bibitem{briand2008bsdes}
	P.~Briand and F.~Confortola.
	\newblock {BSDEs with stochastic Lipschitz condition and quadratic PDEs in
		Hilbert spaces}.
	\newblock {\em Stochastic Processes and their Applications}, 118(5):818--838,
	2008.
	
	\bibitem{Briand2003}
	P.~Briand, B.~Delyon, Y.~Hu, E.~Pardoux, and L.~Stoica.
	\newblock Lp solutions of backward stochastic differential equations.
	\newblock {\em Stochastic Processes and their Applications}, 108(1):109--129,
	2003.
	
	\bibitem{chen2010p}
	S.~Chen.
	\newblock {$L^p$ solutions of one-dimensional backward stochastic differential
		equations with continuous coefficients}.
	\newblock {\em Stochastic analysis and applications}, 28(5):820--841, 2010.
	
	\bibitem{DellacherieMeyer1975}
	C.~Dellacherie and P.-A. Meyer.
	\newblock {\em Probabilit{\'e}s et Potentiel. Chapitres I {\`a} IV}.
	\newblock Hermann, Paris, 1975.
	\newblock {\'E}dition enti{\`e}rement refondue, Publications de l'Institut de
	Math{\'e}matique de l'Universit{\'e} de Strasbourg, no. XV, Actualit{\'e}s
	Scientifiques et Industrielles, vol. 1372.
	
	\bibitem{Delong2013}
	L.~Delong.
	\newblock {\em Backward Stochastic Differential Equations with Jumps and Their
		Actuarial and Financial Applications}.
	\newblock Springer, London, 2013.
	
	\bibitem{eddahbi2017}
	M.~Eddahbi, I.~Fakhouri, and Y.~Ouknine.
	\newblock {$\mathbb{L}^p (p\geq 2)$-solutions of generalized BSDEs with jumps
		and monotone generator in a general filtration}.
	\newblock {\em Modern Stochastics: Theory and Applications}, 4(1):25--63, 2017.
	
	\bibitem{ElJamali}
	M.~El~Jamali.
	\newblock {$L^p$-solution for BSDEs driven by a L{\'e}vy process}.
	\newblock {\em Random Operators and Stochastic Equations}, 31(2):185--197,
	2023.
	
	\bibitem{ElKarouiHuang1997}
	N.~El~Karoui and S.-J. Huang.
	\newblock A general result of existence and uniqueness of backward stochastic
	differential equations.
	\newblock In {\em Backward Stochastic Differential Equations (Paris
		1995--1996)}, volume 364 of {\em Pitman Research Notes in Mathematics
		Series}, pages 27--36. Longman, Harlow, 1997.
	
	\bibitem{el1997backward}
	N.~El~Karoui, S.~Peng, and M.~C. Quenez.
	\newblock Backward stochastic differential equations in finance.
	\newblock {\em Mathematical finance}, 7(1):1--71, 1997.
	
	\bibitem{ElmansouriElOtmani2023}
	B.~Elmansouri and M.~El~Otmani.
	\newblock Generalized backward stochastic differential equations with jumps in
	a general filtration.
	\newblock {\em Random Operators and Stochastic Equations}, 31(3):205--216,
	2023.
	
	\bibitem{Badrelotmani2024}
	B.~Elmansouri and M.~El~Otmani.
	\newblock {Generalized BSDEs driven by RCLL martingales with stochastic
		monotone coefficients}.
	\newblock {\em Modern Stochastics: Theory and Applications}, 11(1):109--128,
	2024.
	
	\bibitem{ELMANSOURI2025110407}
	B.~Elmansouri and M.~Marzougue.
	\newblock {$\mathbb{L}^p$-solutions of backward stochastic differential
		equations with default time}.
	\newblock {\em Statistics \& Probability Letters}, 223:110407, 2025.
	
	\bibitem{ELM2026}
	B.~Elmansouri and M.~E. Otmani.
	\newblock {$\mathbb{L}^p$-solution of generalized BSDEs in a general filtration
		with stochastic monotone coefficients}.
	\newblock {\em Modern Stochastics: Theory and Applications}, 13(1):117--145,
	2026.
	
	\bibitem{Pardoux1999}
	\'{E}tienne Pardoux.
	\newblock {BSDEs, weak convergence and homogenization of semilinear PDEs}.
	\newblock In {\em Nonlinear Analysis, Differential Equations and Control
		(Montreal, QC, 1998)}, volume 528 of {\em NATO Science Series C: Mathematical
		and Physical Sciences}, pages 503--549. Kluwer Academic, Dordrecht, 1999.
	
	\bibitem{fan2012p}
	S.~J. Fan and L.~Jiang.
	\newblock {$L^p (p> 1)$ solutions for one-dimensional BSDEs with linear-growth
		generators}.
	\newblock {\em Journal of Applied Mathematics and Computing}, 38(1):295--304,
	2012.
	
	\bibitem{hamadene1995backward}
	S.~Hamadene and J.-P. Lepeltier.
	\newblock Backward equations, stochastic control and zero-sum stochastic
	differential games.
	\newblock {\em Stochastics: An International Journal of Probability and
		Stochastic Processes}, 54(3-4):221--231, 1995.
	
	\bibitem{hamadene1995zero}
	S.~Hamadene and J.-P. Lepeltier.
	\newblock Zero-sum stochastic differential games and backward equations.
	\newblock {\em Systems \& Control Letters}, 24(4):259--263, 1995.
	
	\bibitem{he1992}
	S.~W. He, J.~G. Wang, and J.~A. Yan.
	\newblock {\em Semimartingale Theory and Stochastic Calculus}.
	\newblock Science Press, Beijing, New York, 1992.
	
	\bibitem{IkedaWatanabe1989}
	N.~Ikeda and S.~Watanabe.
	\newblock {\em Stochastic Differential Equations and Diffusion Processes},
	volume~24 of {\em North-Holland Mathematical Library}.
	\newblock North-Holland, 2nd edition, 1989.
	
	\bibitem{Jacod1979}
	J.~Jacod.
	\newblock {\em Calcul stochastique et probl\`{e}mes de martingales}, volume 714
	of {\em Lecture Notes in Mathematics}.
	\newblock Springer, Berlin, 1979.
	\newblock English translation: \textit{Stochastic Calculus and Martingale
		Problems}.
	
	\bibitem{jacodshiryaev2003}
	J.~Jacod and A.~N. Shiryaev.
	\newblock {\em Limit Theorems for Stochastic Processes}, volume 288 of {\em
		Grundlehren der mathematischen Wissenschaften}.
	\newblock Springer, Berlin, Heidelberg, 2nd edition, 2003.
	
	\bibitem{ElJamaliElmansouri+2026+1+20}
	M.~E. Jamali and B.~Elmansouri.
	\newblock {On the $\mathbb{L}^p$-solution for BSDELs with continuous or left
		continuous coefficient}.
	\newblock {\em Statistics \& Risk Modeling}, 43(1-2):1--20, 2026.
	
	\bibitem{Krein1982}
	S.~G. Kre{\u{\i}}n, Y.~I. Petun{\={\i}}n, and E.~M. Sem{\"{e}}nov.
	\newblock {\em Interpolation of Linear Operators}, volume~54 of {\em
		Translations of Mathematical Monographs}.
	\newblock American Mathematical Society, Providence, RI, 1982.
	\newblock Translated from the Russian by J. Sz{\H{u}}cs.
	
	\bibitem{Kruse2015}
	T.~Kruse and A.~Popier.
	\newblock {BSDEs with monotone generator driven by Brownian and Poisson noises
		in a general filtration}.
	\newblock {\em Stochastics}, 88(4):491--539, 2015.
	
	\bibitem{Kruse2017}
	T.~Kruse and A.~Popier.
	\newblock {$\mathbf{L}^p$-solution for BSDEs with jumps in the case p<2:
		Corrections to the paper 'BSDEs with monotone generator driven by Brownian
		and Poisson noises in a general filtration'}.
	\newblock {\em Stochastics}, 89(8):1201--1227, 2017.
	
	\bibitem{SPS1}
	E.~Lenglart, D.~L{\'e}pingle, and M.~Pratelli.
	\newblock Pr{\'e}sentation unifi{\'e}e de certaines in{\'e}galit{\'e}s de la
	th{\'e}orie des martingales.
	\newblock In {\em S{\'e}minaire de Probabilit{\'e}s XIV}, volume 784 of {\em
		Lecture Notes in Mathematics}, pages 26--48. Springer, 1980.
	
	\bibitem{LI20141582}
	J.~Li and Q.~Wei.
	\newblock {$\mathbf{L}^p$ estimates for fully coupled FBSDEs with jumps}.
	\newblock {\em Stochastic Processes and their Applications}, 124(4):1582--1611,
	2014.
	
	\bibitem{li2024weightedlppgeq1solutionsrandom}
	X.~Li and S.~Fan.
	\newblock {Weighted $L^p~(p\geq1)$ solutions of random time horizon BSDEs with
		stochastic monotonicity generators}, 2024.
	
	\bibitem{li2024weightedsolutionsrandomtime}
	X.~Li, Y.~Zhang, and S.~Fan.
	\newblock {Weighted solutions of random time horizon BSDEs with stochastic
		monotonicity and general growth generators and related PDEs}, 2024.
	
	\bibitem{Marinelli2014}
	C.~Marinelli and M.~R{\"o}ckner.
	\newblock {\em On Maximal Inequalities for Purely Discontinuous Martingales in
		Infinite Dimensions}, pages 293--315.
	\newblock Springer International Publishing, Cham, 2014.
	
	\bibitem{OksendalSulem2019}
	B.~{\O}ksendal and A.~Sulem.
	\newblock {\em Applied Stochastic Control of Jump Diffusions}.
	\newblock Universitext. Springer Cham, 3 edition, 2019.
	
	\bibitem{Pardoux1997}
	E.~Pardoux.
	\newblock Generalized discontinuous backward stochastic differential equations.
	\newblock In {\em Backward Stochastic Differential Equations (Paris,
		1995--1996)}, volume 364 of {\em Pitman Research Notes in Mathematics
		Series}, pages 207--219. Pitman, 1997.
	
	\bibitem{PardouxPeng1990}
	E.~Pardoux and S.~Peng.
	\newblock Adapted solution of a backward stochastic differential equation.
	\newblock {\em Systems \& Control Letters}, 14(1):55--61, 1990.
	
	\bibitem{PDEPARD}
	E.~Pardoux and S.~Peng.
	\newblock Backward stochastic differential equations and quasilinear parabolic
	partial differential equations.
	\newblock In B.~L. Rozovskii and R.~B. Sowers, editors, {\em Stochastic Partial
		Differential Equations and Their Applications}, pages 200--217, Berlin,
	Heidelberg, 1992. Springer Berlin Heidelberg.
	
	\bibitem{peng1991probabilistic}
	S.~Peng.
	\newblock Probabilistic interpretation for systems of quasilinear parabolic
	partial differential equations.
	\newblock {\em Stochastics and stochastics reports}, 37(1-2):61--74, 1991.
	
	\bibitem{Protter2004}
	P.~E. Protter.
	\newblock {\em Stochastic Integration and Differential Equations}.
	\newblock Applications of Mathematics. Springer, Berlin, 2nd edition, 2004.
	
	\bibitem{royer2006backward}
	M.~Royer.
	\newblock Backward stochastic differential equations with jumps and related
	non-linear expectations.
	\newblock {\em Stochastic processes and their applications},
	116(10):1358--1376, 2006.
	
	\bibitem{TangLi1994}
	S.~Tang and X.~Li.
	\newblock Necessary conditions for optimal control of stochastic systems with
	random jumps.
	\newblock {\em SIAM Journal on Control and Optimization}, 32(5):1447--1475,
	1994.
	
	\bibitem{xiao2015lp}
	L.-S. Xiao, S.-J. Fan, and N.~Xu.
	\newblock {$L^p (p \geq 1)$ solutions of multidimensional BSDEs with monotone
		generators in general time intervals}.
	\newblock {\em Stochastics and Dynamics}, 15(01):1550002, 2015.
	
	\bibitem{Yao2010}
	S.~Yao.
	\newblock $\mathbb{L}^p$ solutions of backward stochastic differential
	equations with jumps.
	\newblock 2010.
	
	\bibitem{yao2017lp}
	S.~Yao.
	\newblock $\mathbb{L}^p$ solutions of backward stochastic differential
	equations with jumps.
	\newblock {\em Stochastic Processes and their Applications},
	127(11):3465--3511, 2017.
	
\end{thebibliography}
\end{document}